\theoremstyle{plain}
\newtheorem{theorem}{Theorem}
\newtheorem{corollary}{Corollary}
\newtheorem{lemma}{Lemma} %[theorem]
\newtheorem*{corollary*}{Corollary}
\newtheorem{theoremO}{Theorem}
\newtheorem*{conjecture*}{Conjecture}
\newtheorem*{proposition*}{Proposition}
\newtheorem*{lemma*}{Lemma}
\theoremstyle{definition}
\theoremstyle{remark}
\newcommand{\SC}{{\mathbb C}}  \newcommand{\SD}{{\mathbb D}}  
\newcommand {\SR}{{\mathbb R}}  \newcommand{\ST}{{\mathbb T}}  
\newcommand{\al}{\alpha}  \newcommand{\ga}{\gamma} \newcommand{\de}{\delta} 
  \newcommand{\ve}{\varepsilon}  \newcommand{\ze}{\zeta}
\def\t{\theta}  \newcommand{\vt}{\vartheta}  \newcommand{\la}{\lambda}
  \newcommand{\vp}{\varphi}  
  \newcommand{\Om}{\Omega}
  \newcommand{\cP}{{\mathcal P}}    
\newcommand{\be}{\begin{equation}}
\newcommand{\ee}{\end{equation}}
\newcommand{\bea}{\begin{eqnarray}}
\newcommand{\eea}{\end{eqnarray}}
\begin{document}
%%%%%%%%%%%%%%%%%%%%%%%%%%%%%%%%%%%%%%%%%%%%%%%%%%

\title{On Hardy spaces, univalent functions and the second coefficient}

\author[M. Chuaqui]{Martin Chuaqui}
\address{Facultad de Matem\'aticas, Pontificia Universidad Cat\'olica de Chile, Casilla 306, Santiago 22, Chile.}  \email{mchuaqui@uc.cl}

\author[I. Efraimidis]{Iason Efraimidis}
\address{Departamento de Matem\'aticas, Universidad Aut\'onoma de Madrid, 28049 Madrid, Spain} \email{iason.efraimidis@uam.es}

\author[R. Hernández]{Rodrigo Hernández}
\address{Facultad de Ciencias y Tecnolog\'ia, Universidad Adolfo Ib\'a\~nez, Vi\~na del Mar, Chile.} \email{rodrigo.hernandez@uai.cl}

\subjclass[2010]{30C55, 30C62, 31A05} 
\keywords{Hardy class, univalent functions, convex, starlike, close-to-convex, coefficients \\ Martin Chuaqui: https://orcid.org/0000-0002-7118-2812 \\ Iason Efraimidis: https://orcid.org/0000-0002-0252-5607 \\ Rodrigo Hernández: https://orcid.org/0000-0002-2787-4598}

\maketitle

\begin{abstract}
We consider normalized univalent functions with prescribed second Taylor coefficient $a_2$. For convex functions $f$ we study the Hardy spaces to which $f$ and $f'$ belong, refining in particular on a theorem of Eenigenburg and Keogh, and give a sharp asymptotic estimate and an explicit uniform bound for their coefficients. Relating the lower order of a convex function to the angle at infinity of its range we deduce that its range lies always in some sector of aperture $|a_2|\pi$. We give sharp smoothness conditions on the boundary for convex functions with prescribed second coefficient. 

We find the sharp Hardy space estimates for $f$ and $f'$ when $f$ belongs to other geometric subclasses, such as those of starlike, close-to-convex, convex in one direction, convex in the positive direction and typically real funtions. We extend a theorem of Lohwater, Piranian and Rudin, in which a univalent function whose derivative has radial limits almost nowhere is constructed, by showing that this pathological behavior can be obtained for any prescribed value of the second coefficient, in particular, manifesting itself arbitrarily close to the Koebe function. 
\end{abstract}

\section{Introduction} 
%%%%%%%%%%%%%%%%%%%%%%%%%%%%%%%%%%%%%%%%%%%%%%%%%%
%%%%%%%%%%%%%%%%%%%%%%%%%%%%%%%%%%%%%%%%%%%%%%%%%%

Let $\SD$ denote the unit disk and $S$ the class of univalent functions 
$$
f(z) = z + \sum_{n=2}^{\infty} a_n z^n, \qquad  z\in\SD. 
$$
In view of the implications of Bieberbach's inequality $|a_2|\leq 2$ in covering, growth and distortion theorems, among others, it is of interest to study constraint extremal problems having $|a_2|$ as a prescribed parameter. Results on problems of this kind appeared as early as 1920 with Gronwall's paper \cite{Gr20}, while a list of references, inevitably incomplete, is \cite{Ah70, ASS99, Bi76, Eh74, Fi67, Je54, Lee73, Lew74, Ne70, SSW80, Te70}; see also the recent survey \cite{RS22}. 

The integral means of a function $f:\SD\to\SC$ are typically denoted by 
$$
M_p(r,f) =  \left\{ \frac{1}{2\pi}\int_0^{2\pi} |f(re^{i\t})|^p d\t \right\}^{1/p}, \qquad p>0,
$$
and its maximum modulus by
$$
M_{\infty}(r,f) =  \max_{|z|=r} |f(z)|. 
$$
The Hardy space $H^p$ is defined as the space of holomorphic functions $f$ in $\SD$ satisfying 
$$
\sup_{r<1} M_p(r,f) <\infty,
$$ 
for $0<p\leq \infty$. The study of the integral means of univalent functions and their derivatives is a classical topic. For example, in 1927 Prawitz showed that $S\subset H^p$ for all $p<1/2$. One can find a wealth of information on the growth of the means of functions in $S$ in Pommerenke's book \cite[\S 5.1]{Po75}, while Baernstein's celebrated inequality 
$$
M_p(r,f) \leq  M_p(r,k), \qquad r\in(0,1),
$$ 
for all $p \in (0,\infty)$ and $f \in S$, where $k(z) = \frac{z}{(1-z)^2}$ is the Koebe function, can be found in Chapter 7 of Duren's book \cite{Du2}. Further references on this subject, including characterizations of univalent functions in Hardy spaces in terms of geometric properties of their range, as well as applications to Operator Theory, can be found in \cite{BGP04, CCKR24, EK70, Han70, Ka20, Ka23, KK21, KS09, KS11, Leu79, PC96, PC97a, PC97b}. 

In this article we go a step further by analyzing the integral means of univalent functions with \emph{prescribed} $|a_2|$, a question not addressed in the literature. We provide a comprehensive study of Hardy space estimates for $f$ and $f'$ when $f$ belongs to the standard geometric subclasses, paying special attention to convex functions by studying their coefficients, their smoothness at the boundary and the angle at infinity of their range. On the other hand, we show that certain pathological behavior of the derivative of a univalent function, first shown to exist by Lohwater, Piranian and Rudin, can be realized arbitrarily close to the Koebe function.

\vskip.2cm 
\noindent \textbf{Convex functions, Hardy spaces and coefficients.} Our starting point is a theorem of Eenigenburg and Keogh~\cite{EK70}, stating that if $f$ is in $C$, the subclass of $S$ consisting of functions whose range is a \emph{convex} set, and if $f$ is not a rotation of the half-plane function $z\mapsto \frac{z}{1-z}$ then there exists $\de>0$ for which $f' \in H^{ \frac{1}{2}+ \de}$. Our first result makes this precise in terms of $|a_2|$.

\begin{theorem} \label{thm-convex-deriv-Hardy}
If $f\in C$ then $f'\in H^p$ for all $p<\frac{1}{1+|a_2|}$ and $f\in H^q$ for all $q<\frac{1}{|a_2|}$. Both estimates are sharp. 
\end{theorem}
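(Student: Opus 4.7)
After the rotation $f(z)\mapsto e^{-i\arg a_2}f(e^{i\arg a_2}z)$, which preserves both $C$ and all Hardy norms, we may assume $a_2=\alpha:=|a_2|\in[0,1]$. Sharpness for both assertions will come from the single sector map
\[
 f_\alpha(z)=\frac{1}{2\alpha}\!\left[\left(\frac{1+z}{1-z}\right)^{\!\alpha}-1\right],\qquad f_\alpha'(z)=(1+z)^{\alpha-1}(1-z)^{-(\alpha+1)},
\]
with $f_0(z)=\tfrac12\log\tfrac{1+z}{1-z}$ at $\alpha=0$. A direct computation of $1+zf_\alpha''/f_\alpha'=(1+2\alpha z+z^2)/(1-z^2)$ shows that this function maps $\SD$ onto the right half-plane, hence $f_\alpha\in C$ with $a_2(f_\alpha)=\alpha$, and the standard analysis of $\int_0^{2\pi}|f_\alpha'(re^{i\theta})|^p\,d\theta$ near $\theta=0,\pi$ yields $f_\alpha'\in H^p$ iff $p<1/(1+\alpha)$ and $f_\alpha\in H^q$ iff $q<1/\alpha$.

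For the upper bound on $f'$, I exploit the Herglotz representation of the Carathéodory function $p(z)=1+zf''(z)/f'(z)$: there exists a probability measure $\mu$ on $\partial\SD$ with
\[
 p(z)=\int_{\partial\SD}\frac{1+\bar\zeta z}{1-\bar\zeta z}\,d\mu(\zeta),\qquad \int_{\partial\SD}\bar\zeta\,d\mu(\zeta)=\alpha,
\]
so that $f'(z)=\exp\!\bigl(-2\int\log(1-\bar\zeta z)\,d\mu(\zeta)\bigr)$. The key structural input is the \emph{atom bound}: for every $\zeta_0\in\partial\SD$, decomposing $\mu=\lambda\delta_{\zeta_0}+(1-\lambda)\nu$ with $\nu$ a probability measure, the requirement $|\int\bar\zeta\,d\nu|\le 1$ squared reads $|\alpha-\lambda\bar\zeta_0|^2\le(1-\lambda)^2$, which simplifies to
\[
 \mu(\{\zeta_0\})\le\frac{1-\alpha^2}{2(1-\alpha\Re\zeta_0)}\le\frac{1+\alpha}{2}.
\]
The identical argument, applied to $\mu$ conditioned on a small arc about any $\zeta_*\in\partial\SD$, shows that no localised cluster of mass exceeds $(1+\alpha)/2$, and therefore the worst possible local singularity of $f'$ is of order $|1-\bar\zeta_* z|^{-(1+\alpha)}$, which is locally $L^p$-integrable on $|z|=r$ precisely when $p<1/(1+\alpha)$.

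Converting this pointwise information into a uniform estimate on $M_p(r,f')$ is the main obstacle. My plan is to approximate $\mu$ weak-$*$ by finitely supported measures $\mu_n$ and write $f_n'(z)=\prod_i(1-\bar\zeta_i z)^{-2\lambda_i}$ with each $\lambda_i\le(1+\alpha)/2$; partitioning $[0,2\pi]$ into short arcs around each $\arg\zeta_i$ and applying Hölder with weights matched to the $\lambda_i$ should give a bound on $M_p(r,f_n')$ uniform in $n$ and $r$, which then passes to $f$ by a normal-families argument. Finally, $f\in H^q$ for every $q<1/\alpha$ follows from the Hardy--Littlewood embedding \cite{Du2}: if $g\in H^p$ with $0<p<1$ then its antiderivative lies in $H^{p/(1-p)}$; taking $g=f'$ and letting $p\uparrow 1/(1+\alpha)$ makes $p/(1-p)\uparrow 1/\alpha$, and the sector map $f_\alpha$ shows that this exponent cannot be improved.
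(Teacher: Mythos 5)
Your sharpness analysis via the sector functions and the reduction of the second assertion to the first via Hardy--Littlewood are both correct and coincide with the paper. The atom bound $\mu(\{\zeta_0\})\le\frac{1-\alpha^2}{2(1-\alpha\,{\rm Re}\,\zeta_0)}\le\frac{1+\alpha}{2}$ is also correct and is a genuine structural insight (it is essentially the mechanism behind Lemma~\ref{lem-angle-point-mass} in the paper). But the central claim --- $f'\in H^p$ for $p<\frac{1}{1+\alpha}$ --- is not proved; what you offer is a plan, and the plan has a real gap at exactly the step you flag as ``the main obstacle.'' Plain H\"older across the factors of $f_n'(z)=\prod_i(1-\bar\zeta_i z)^{-2\lambda_i}$ can never do better than $p<1/2$: with conjugate exponents $s_i$ one needs $\sum 1/s_i\le 1$ while each factor requires $2\lambda_i p\, s_i<1$, and these are simultaneously satisfiable only when $\sum 2\lambda_i p=2p<1$. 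H\"older is blind to the fact that the singularities sit at different points, which is the whole source of the improvement from $1/2$ to $1/(1+\alpha)$. If instead you localize to short arcs around each $\arg\zeta_i$ and bound the remaining factors by constants, those constants involve negative powers of the gaps between atoms and blow up as $n\to\infty$ when the atoms of $\mu_n$ accumulate (which they must, whenever $\mu$ has a continuous part). So the asserted bound ``uniform in $n$ and $r$'' does not follow from the stated ingredients, and the weak-$*$ limit cannot be taken.

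The paper closes this gap with two classical devices you do not use. First, Gronwall's theorem (Theorem~\ref{thm-growth-dist-C}) already packages the $|a_2|$-dependence into a pointwise distortion bound $|f'(z)|\le s_\alpha'(|z|)=(1-r)^{-(1+\alpha)}(1+r)^{-(1-\alpha)}$ --- no measure-theoretic bookkeeping needed. Second, and this is the key step your plan lacks a substitute for, Prawitz's Theorem~\ref{thm-Prawitz} applied to the \emph{starlike} function $g(z)=zf'(z)$ (starlike by Alexander's theorem) converts that maximum-modulus bound into the integral-means bound $M_p^p(r,g)\le p\int_0^r t^{p-1}(1-t)^{-p(1+\alpha)}\,dt$, finite precisely when $p(1+\alpha)<1$. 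Prawitz's inequality is exactly the tool for upgrading $M_\infty$ control to $M_p$ control for univalent functions; your pointwise singularity analysis, while correct, cannot be upgraded by H\"older or by Jensen's inequality against $\mu$ (which also only yields $p<1/2$). To salvage your approach you would need either this Prawitz-type device or a genuinely more careful exponential-of-a-potential estimate; as written, the proof is incomplete.
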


The sharpness comes from the sector function, with $\al\in[0,1]$, given by 
\be \label{formula-sector}
s_{\al}(z) = \frac{1}{2\al} \left[ \left( \frac{1+z}{1-z} \right)^{\al} - 1\right]  = z +\al z^2 + \frac{1+2\al^2}{3} z^3 + \ldots 
\ee
(understood as $z\mapsto \frac{1}{ 2 }\log\frac{ 1+ z }{ 1- z }$ when $\al=0$), which maps $\SD$ onto a sector of opening $\al\pi$. 

For the coefficients of functions in $C$ we find their sharp order of growth and an explicit bound. Recall that $|a_2|\leq1$ with equality only for rotations of the half-plane function $s_1(z)=\frac{z}{1-z}$. 

\begin{theorem} \label{thm-conv-n-coeff}
If $f\in C$ then $a_n\in O(n^{|a_2|-1})$, which is sharp, and 
$$
|a_n| \leq \exp\left( \frac{|a_2|^2-1}{2} \right), \qquad n\geq2.
$$
\end{theorem}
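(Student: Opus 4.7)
The theorem has two parts: the asymptotic $a_n \in O(n^{|a_2|-1})$ and the explicit uniform bound $|a_n| \leq \exp((|a_2|^2-1)/2)$. I would address these with different arguments.

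For the asymptotic, the key ingredient is the geometric fact announced in the introduction---that the range of any $f\in C$ lies in a sector of aperture $|a_2|\pi$. Sector containment bounds the (lower) order $\alpha$ of $f$ by $|a_2|$, and the Hayman--Pommerenke regularity theorem for convex functions then yields $a_n \sim c\,n^{\alpha-1}$, so $a_n = O(n^{|a_2|-1})$. Sharpness is witnessed by the sector function $s_{|a_2|}$: a Tauberian argument applied to the explicit formula $s_{|a_2|}'(z)=(1-z)^{-1-|a_2|}(1+z)^{|a_2|-1}$ gives the asymptotic $a_n(s_{|a_2|}) \sim \frac{2^{|a_2|-1}}{\Gamma(|a_2|+1)} n^{|a_2|-1}$.

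For the uniform bound, I would exploit the Carath\'eodory representation of the convex class. Writing $p(z) := 1 + zf''(z)/f'(z)$, convexity is equivalent to $p\in\mathcal P$, with $p(0)=1$, $p'(0)=2a_2$; setting $p(z) = 1 + \sum_{k\ge1} p_k z^k$, we have $p_1 = 2a_2$ and $|p_k|\le 2$ by Carath\'eodory. Integration gives $\log f'(z) = \sum_{k\ge 1}(p_k/k)z^k$, so $(n+1)a_{n+1}$ is the $n$-th Taylor coefficient of $f' = e^{\log f'}$. I would then apply a second Lebedev--Milin-type inequality to $(f')^{1/2}$, using the Milin weights $\gamma_k := p_k/(2k)$ for which $k|\gamma_k|^2 - 1/k = (|p_k|^2 - 4)/(4k) \leq 0$ for every $k\ge 1$, with equality at $k = 1$ giving the contribution $|a_2|^2 - 1$. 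The Milin functional is then bounded above by $n(|a_2|^2 - 1)$ (higher terms being nonpositive), and Lebedev--Milin yields $|e_n|^2 \leq \exp(n(|a_2|^2 - 1)/(n+1))$ for the coefficients $e_n$ of $(f')^{1/2}$; a convolution estimate applied to $f' = ((f')^{1/2})^2$ converts this into the asserted bound on $|a_n|$.

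\textbf{Main obstacle.} The delicate step is the passage from the Lebedev--Milin bound on $|e_n|$ to $|a_n|$: a naive Cauchy--Schwarz on $(n+1)a_{n+1} = \sum_{j+k=n} e_j e_k$ loses constants, and one must exploit the fact that the bound $|e_j|^2 \leq \exp(j(|a_2|^2-1)/(j+1))$ improves with $j$, so that the convolution reconstructs the clean exponent $(|a_2|^2-1)/2$ rather than a weaker constant depending on $n$. An alternative route, should the calibration prove delicate, is to reduce the extremal problem over $\mathcal P$ with $|p'(0)|$ prescribed to probability measures on $\mathbb T$ supported on at most two points; then $f'(z)=(1-\bar\xi_1 z)^{-A}(1-\bar\xi_2 z)^{-B}$ with $A+B=2$, and $(n+1)a_{n+1}$ becomes a Cauchy convolution of two binomial series amenable to explicit estimation.
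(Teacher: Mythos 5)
Your proposal splits into two genuinely different arguments from the paper's, and each has a concrete problem. For the asymptotic part, the step ``the Hayman--Pommerenke regularity theorem for convex functions yields $a_n\sim c\,n^{\alpha-1}$'' is not a theorem: no asymptotic equivalence of this kind holds for convex maps (for the strip map $s_0$ one has $a_{2k}=0$ and $a_{2k+1}=1/(2k+1)$, so the coefficients do not satisfy $a_n\sim c n^{-1}$). What is true, and what the paper uses, is Pommerenke's $O$-estimate for \emph{starlike} functions: from the Gronwall distortion bound $|f'(z)|\le s_{|a_2|}'(|z|)$ the starlike function $g(z)=zf'(z)$ (Alexander) satisfies $|g(z)|\le (1-|z|)^{-(1+|a_2|)}$, and \cite[Theorem 3]{Po62} gives $na_n=O(n^{|a_2|})$. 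Your route through sector containment also imports Theorem~\ref{thm-angle-l-order} of this paper, a much deeper result proved later, when the classical growth half of Gronwall's theorem ($M_\infty(r,f)\le s_{|a_2|}(r)$) already supplies the needed growth bound; so even after replacing the false ``$\sim$'' by the correct $O$-statement, the machinery you invoke is both heavier than necessary and not correctly calibrated at $|a_2|=0$.

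For the uniform bound your strategy (exponentiate $\log f'$, apply Lebedev--Milin to $(f')^{1/2}$, then convolve) is workable but is left with the gap you yourself flag, and the inequality you quote is misstated: the second Lebedev--Milin inequality bounds the \emph{sum} $\sum_{m=0}^{n}|e_m|^2$ by $(n+1)\exp\bigl(\frac{1}{n+1}\sum_{m=1}^{n}\sum_{k=1}^{m}(k|\gamma_k|^2-\tfrac1k)\bigr)$; it does not give $|e_n|^2\le\exp(n(|a_2|^2-1)/(n+1))$ without the factor $n+1$. The fix is that this factor is exactly what you need: $(n+1)|a_{n+1}|\le\sum_{j=0}^{n}|e_j||e_{n-j}|\le\sum_{m=0}^{n}|e_m|^2\le(n+1)\exp\bigl(\tfrac{n}{n+1}(|a_2|^2-1)\bigr)\le(n+1)\exp\bigl(\tfrac{|a_2|^2-1}{2}\bigr)$ for $n\ge1$, since $|a_2|^2-1\le0$ and $\tfrac{n}{n+1}\ge\tfrac12$; so the convolution closes by AM--GM, not by exploiting any improvement of the bound in $j$. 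Your fallback reduction to two-atom Herglotz measures is not justified for an $n$-th coefficient functional. The paper sidesteps the convolution entirely: by Marx--Strohh\"acker, $h=2zf'/f-1\in\cP$ with $c_1=2a_2$, so $f(z)/z=\exp\bigl(\sum_k \tfrac{c_k}{2k}z^k\bigr)$, and the \emph{third} Lebedev--Milin inequality applied to $f/z$ gives $|a_{n+1}|^2\le\exp\bigl(\sum_{k=1}^{n}\tfrac{|c_k|^2-4}{4k}\bigr)\le\exp(|a_2|^2-1)$ directly, since every summand is nonpositive by Carath\'eodory's bound.
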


Following Cruz and Pommerenke \cite{CP07} we define the \emph{lower order} of a function $f$ in $\SD$ as 
$$
\beta = \inf_{\ze\in\SD} |A_f(\ze)|, 
$$
where the expression 
$$
A_f(\ze) = \frac{1}{2} (1-|\ze|^2) \frac{f''(\ze)}{f'(\ze)} - \overline{\ze},  \qquad \ze\in\SD, 
$$
is the second coefficient of the Koebe transform of $f$, given by 
\be \label{form-Koebe-transf}
F_{\ze}(z) = \frac{ f\left( \frac{\ze+z}{1+\overline{\ze}z} \right) - f(\ze) }{ (1-|\ze|^2) f'(\ze) } = z + A_f(\ze) z^2 + \ldots, \qquad \ze, z \in\SD. 
\ee
Clearly, $ 0\leq \beta\leq |a_2|$, while we will prove in Section~\ref{sect-convex-Hardy-coeff} that 
\vskip.22cm
\begin{quote} 
\textit{$\beta=|a_2|$ if and only if either $a_2=0$ or $f$ is a rotation of $s_\al$.}
\end{quote}
\vskip.17cm
We have the following improvement of Theorem~\ref{thm-convex-deriv-Hardy}.

\vskip.2cm
\begin{theorem} \label{thm-convex-Hardy-low-ord}
If $f\in C$ then $f'\in H^p$ for all $p<\frac{1}{1+\beta}$ and $f\in H^q$ for all $q<\frac{1}{\beta}$.  
\end{theorem}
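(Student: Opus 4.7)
The plan is to apply Theorem~\ref{thm-convex-deriv-Hardy} to the Koebe transforms $F_\zeta$ of $f$ and then transport the conclusion back to $f$ itself. First I would check that $C$ is invariant under the Koebe transform: precomposition with a disk automorphism does not alter the range, and the subsequent affine normalization is a translation and a complex dilation of that range, so convexity is preserved. Thus $F_\zeta\in C$ for every $\zeta\in\SD$, and by~\eqref{form-Koebe-transf} its second Taylor coefficient is exactly $A_f(\zeta)$. Theorem~\ref{thm-convex-deriv-Hardy} then yields
$$
F_\zeta'\in H^p \text{ for all } p<\tfrac{1}{1+|A_f(\zeta)|}, \qquad F_\zeta\in H^q \text{ for all } q<\tfrac{1}{|A_f(\zeta)|}.
$$

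Next I would transfer these memberships to $f$ itself. Differentiating~\eqref{form-Koebe-transf} with $w=\frac{\zeta+z}{1+\overline{\zeta}z}$ gives
$$
f'(w) \,=\, f'(\zeta)\,(1+\overline{\zeta} z)^2\, F_\zeta'(z),
$$
while the defining formula is $f(w)=(1-|\zeta|^2)f'(\zeta)F_\zeta(z)+f(\zeta)$. Since $z\mapsto w$ is a disk automorphism, and Hardy class membership is invariant both under composition with disk automorphisms and under multiplication by bounded analytic functions, we conclude that $F_\zeta'\in H^p$ implies $f'\in H^p$ and $F_\zeta\in H^q$ implies $f\in H^q$.

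Finally, given $p<\frac{1}{1+\beta}$, the definition $\beta=\inf_{\zeta\in\SD}|A_f(\zeta)|$ lets me pick $\zeta\in\SD$ with $|A_f(\zeta)|$ close enough to $\beta$ that $p<\frac{1}{1+|A_f(\zeta)|}$, and the first claim follows from the preceding paragraph. The argument for $q<\frac{1}{\beta}$ is identical (with the understanding that $q$ is arbitrarily large when $\beta=0$). The proof has no real obstacle: the substantive analytic work has already been carried out in Theorem~\ref{thm-convex-deriv-Hardy}, and the only point worth underlining here is the conformal invariance of $H^p$ under disk automorphism, which reduces the refined statement to the coefficient version applied at an arbitrary base point $\zeta$.
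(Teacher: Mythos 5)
Your proposal is correct and follows essentially the same route as the paper: apply Theorem~\ref{thm-convex-deriv-Hardy} to the Koebe transform $F_\zeta$ at a point $\zeta$ where $|A_f(\zeta)|$ is within $\varepsilon$ of $\beta$, then transfer membership back to $f$ via the identity $f'\circ\psi = f'(\zeta)(1+\overline{\zeta}z)^2F_\zeta'$ and the invariance of $H^p$ under disk automorphisms (the Corollary in \cite[\S 2.6]{Du1}). The only cosmetic difference is that you read off $f\in H^q$ directly from $F_\zeta\in H^q$ and the affine relation, whereas the paper deduces it from the $f'$ statement via Theorem~\ref{thm-Hardy-Littl}; both are valid.
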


\vskip.2cm 
\noindent \textbf{Convex functions and the angle at infinity.} We define the \emph{angle at infinity} of a convex domain $\Om$ as 
\be \label{def-angle-infinity}
\Theta(\Om)  = \inf \{ \, \t \in[0,\pi] \; : \; \Om \; \text{is contained in a sector of aperture} \; \t \; \}; 
\ee
see \cite[\S 3]{CHLM23}. An equivalent definition in terms of half-tangents is given in Section~\ref{sect-convex-angle}. We regard a sector of aperture zero as an infinite strip. Clearly, $\Theta=0$ for bounded domains, infinite half-strips and strips. Note that the infimum in \eqref{def-angle-infinity} is not always attained. For example, $\Theta=0$ for a parabolic region but no parallel strip contains it. Simple modifications provide similar examples for other values of $\Theta$ in $(0,\pi)$. On the other hand, if $\Theta=\pi$ then this value is always attained by any of the supporting half-planes of $\Om$. In fact, we will prove that in this case $\Om$ itself must be a half-plane.

Our main theorem relates the angle at infinity with the lower order as follows. 

\begin{theorem} \label{thm-angle-l-order}
If $f\in C$ then $\Theta\big(f(\SD)\big) = \beta \pi$. 
\end{theorem}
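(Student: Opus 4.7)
The plan is to prove both inequalities $\Theta(f(\SD))\le\beta\pi$ and $\Theta(f(\SD))\ge\beta\pi$, using the Koebe transform~\eqref{form-Koebe-transf} as the main unifying tool. The starting observation is that when $f\in C$, each $F_\zeta$ is again normalized convex univalent---the class $C$ is preserved by a disk automorphism followed by an affine normalization---with second coefficient $A_f(\zeta)$, and its range is an affine image of $f(\SD)$. Since the angle at infinity is invariant under translations, positive dilations, and rotations, one has $\Theta(F_\zeta(\SD))=\Theta(f(\SD))$ for every $\zeta\in\SD$.

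The crux of the upper bound $\Theta\le\beta\pi$ is an auxiliary lemma: \emph{for every $g\in C$, $\Theta(g(\SD))\le|a_2(g)|\pi$}. Granting this, applying it to $g=F_\zeta$ yields $\Theta(f(\SD))\le|A_f(\zeta)|\pi$ for every $\zeta$, and taking the infimum gives $\Theta(f(\SD))\le\beta\pi$. To prove the lemma I would use the Herglotz representation $1+zg''(z)/g'(z)=\int_0^{2\pi}(e^{i\phi}+z)/(e^{i\phi}-z)\,d\mu(\phi)$ for a probability measure $\mu$ on $[0,2\pi]$, so that $a_2(g)=\int e^{-i\phi}\,d\mu$. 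Using $(d/d\theta)\arg g'(re^{i\theta})=\mathrm{Re}(zg''(z)/g'(z))$ to track the boundary tangent direction, atoms of $\mu$ of mass $m\ge 1/2$ at $\phi_0$ produce infinity vertices of $g(\SD)$ of aperture $(2m-1)\pi$, atoms of mass $m<1/2$ produce finite corners of interior angle $(1-2m)\pi$, and the absolutely continuous part of $\mu$ corresponds to smooth pieces of $\partial g(\SD)$. At most one atom can exceed mass $1/2$, so $\Theta(g(\SD))=\pi\max(0,2m^{*}-1)$ where $m^{*}$ is the largest atom's mass, and the elementary estimate $|a_2(g)|\ge|2m^{*}-1|$ then closes the lemma.

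For the reverse inequality $\Theta\ge\beta\pi$, I would exhibit $\zeta\in\SD$ with $|A_f(\zeta)|\le\Theta(f(\SD))/\pi$. When $\Theta>0$, $f$'s Herglotz measure carries an atom $m^{*}>1/2$ at some $\phi^{*}$ with $\Theta=(2m^{*}-1)\pi$, and a direct calculation gives $A_f(re^{i\phi^{*}})\to(2m^{*}-1)e^{-i\phi^{*}}$ as $r\to 1$, so $\beta\le 2m^{*}-1=\Theta/\pi$. When $\Theta=0$ one needs $\beta=0$: either $\mu$ has an atom of mass exactly $1/2$ (a strip-like end), in which case the same radial computation gives $A_f\to 0$; or $\mu$ has no atom of mass $\ge 1/2$, in which case the non-tangential boundary values of $A_f$ equal $-e^{-i\theta}$ almost everywhere, a boundary map of winding number $-1$ about the origin, and a degree-theoretic argument (applied on circles $|\zeta|=R$ with $R\to 1$) forces $A_f$ to vanish somewhere in $\SD$.

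The main obstacle is the structural claim in the auxiliary lemma that an atom of $\mu$ of mass $m\ge 1/2$ at $\phi_0$ produces an infinity vertex of $g(\SD)$ of aperture exactly $(2m-1)\pi$. This requires a careful analysis of the boundary behavior of $\arg g'(re^{i\theta})$ as $r\to 1$ through the atomic points, separating singular and regular contributions in the Poisson representation of $\mu$, and in particular verifying that finite atoms of small mass do not produce unbounded ends. Once this structural description is secured, both bounds reduce to essentially routine verification.
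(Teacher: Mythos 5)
Your strategy is essentially the one the paper follows: pass to the Herglotz measure $\mu$ of $f$, show that the angle at infinity equals $(2m^*-1)\pi$ where $m^*$ is the mass of the atom at the preimage of infinity (this is the paper's Lemma~\ref{lem-angle-point-mass}, proved via the half-tangent description of $\Theta$ and the boundary-rotation formula $2\pi\mu(I)$ --- precisely the analysis you flag as your ``main obstacle'', and your description of its content, including the interior angles $(1-2m)\pi$ produced by small atoms, is correct), get $\beta\le\Theta/\pi$ from the radial limit $A_f(re^{i\phi^*})\to(2m^*-1)e^{-i\phi^*}$ (the paper does this by dominated convergence), and get $\beta\ge\Theta/\pi$ from the moment estimate $|a_2|\ge 2m^*-1$. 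Your detour through the Koebe transform for this last inequality is valid and not circular, but it is only a repackaging: the paper obtains the same bound pointwise by writing $A_f(z)=\int_{\ST}\frac{\la-\overline{z}}{1-\la z}\,d\mu(\la)$ and applying the triangle inequality, which avoids having to discuss how $\mu$ transforms under the Koebe transform.

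The one genuine gap is the case $\Theta=0$ with no atom of mass $\ge 1/2$, i.e.\ $f$ bounded, where you must show $\beta=0$. Your degree-theoretic argument does not work as stated: almost-everywhere nontangential convergence of $A_f$ to $-e^{-i\theta}$ gives no control over the winding number of $A_f$ on a circle $|\zeta|=R$, since the only uniform bound available is $|A_f(\zeta)+\overline{\zeta}|=\tfrac12(1-|\zeta|^2)|f''(\zeta)/f'(\zeta)|\le(1+|\zeta|)/|\zeta|$, which is close to $2$ near the boundary; thus $A_f$ need not stay close to $-\overline{\zeta}$ on any full circle, and its degree there is not determined by the a.e.\ radial boundary values. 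The statement you need --- that bounded convex (indeed bounded locally univalent) functions have lower order zero --- is a genuine theorem, and the paper disposes of this case by citing Pommerenke \cite{Po08}. You should either invoke that result or supply a correct proof of it; as written, this case is unproved.
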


From this, we deduce some corollaries. We will be needing the following theorem of Chuaqui and Osgood \cite{CO95} and Fournier, Ma and Ruscheweyh \cite{FMR98}. We provide a new proof of this in Section~\ref{sect-CO-FMR-measures}. 

\begin{theoremO}[\cite{CO95, FMR98}] \label{thm-a=0-CO-FMR}
If $f\in C$ has $a_2=0$ then $f$ is either bounded or a rotation of $s_0(z)= \frac{1}{2} \log\frac{1+z}{1-z}$. 
\end{theoremO}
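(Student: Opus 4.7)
The plan is to deduce the result from the standard Herglotz representation for convex univalent functions. Since $1+zf''(z)/f'(z)$ has positive real part on $\SD$ and equals $1$ at the origin, the Herglotz--Riesz formula supplies a probability measure $\mu$ on $\partial\SD$ with
\be \label{eq:herg-plan}
f'(z) \;=\; \exp\left( -2\int_{\partial\SD} \log(1-\overline{\eta}z)\, d\mu(\eta) \right),
\ee
and differentiating at the origin gives $a_2 = \int_{\partial\SD} \overline{\eta}\, d\mu(\eta)$.

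The crucial ingredient is the criterion that $f$ is unbounded if and only if $\mu$ carries an atom of mass at least $\tfrac{1}{2}$; only the ``only if'' direction is needed here. I would prove it by analyzing $\log|f'(r\eta_0)| = -2\int\log|1-r\overline{\eta}\eta_0|\,d\mu(\eta)$ for $\eta_0\in\partial\SD$. Splitting the integral over $\{|\eta-\eta_0|<\de\}$ and its complement, the complement contributes $O(1)$ while the near part is bounded by $\mu(\{|\eta-\eta_0|<\de\})\log\bigl(1/(1-r)\bigr)$. As $\de\to 0$ this mass decreases to $m:=\mu(\{\eta_0\})$, so if $m<\tfrac{1}{2}$ one obtains $|f'(r\eta_0)|=O\bigl((1-r)^{-2m-\ve}\bigr)$ for every $\ve>0$, hence $\int_0^1|f'(r\eta_0)|\,dr<\infty$. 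A tightness/compactness argument on $\partial\SD$ upgrades this to a uniform bound when no atom of $\mu$ reaches mass $\tfrac{1}{2}$, forcing $f$ to be bounded on $\SD$.

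Granted the criterion, assume $f\in C$ has $a_2=0$ and is unbounded, so $\mu = m\,\de_{\eta_0}+(1-m)\nu$ for some $m\geq\tfrac{1}{2}$, $\eta_0\in\partial\SD$, and a probability measure $\nu$ on $\partial\SD\setminus\{\eta_0\}$. The barycenter identity becomes $m\overline{\eta_0}+(1-m)\int\overline{\eta}\,d\nu=0$; taking moduli gives $m\leq 1-m$, hence $m=\tfrac{1}{2}$ and $\bigl|\int\overline{\eta}\,d\nu\bigr|=1$. Since $|\overline{\eta}|=1$ $\nu$-a.e., the latter equality is only possible when $\nu$ is a Dirac mass, necessarily $\de_{-\eta_0}$. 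Therefore $\mu = \tfrac{1}{2}(\de_{\eta_0}+\de_{-\eta_0})$, and substituting into \eqref{eq:herg-plan} yields $f'(z)=(1-\overline{\eta_0}^{\,2}z^2)^{-1}$, so
$$ f(z) \;=\; \frac{1}{2\overline{\eta_0}}\log\frac{1+\overline{\eta_0}z}{1-\overline{\eta_0}z} \;=\; e^{-i\vp}s_0\bigl(e^{i\vp}z\bigr), \qquad e^{i\vp}=\overline{\eta_0}, $$
which is precisely a rotation of $s_0$.

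The hard part is the unboundedness criterion, and more specifically the passage from ``all radial limits of $f$ are finite'' to ``$f$ is bounded on $\SD$''. The pointwise growth estimate on $|f'(r\eta_0)|$ must be rendered uniform in $\eta_0$, which requires controlling how close $\mu$ can come to concentrating mass $\tfrac{1}{2}$ near a single point; a compactness argument providing a finite cover of $\partial\SD$ by arcs carrying mass bounded uniformly below $\tfrac{1}{2}$ should deliver the uniform growth bound, and hence the boundedness of $f$.
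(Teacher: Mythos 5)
Your proposal is correct and follows essentially the same route as the paper: both pass to the Herglotz measure $\mu$ of $1+zf''/f'$, invoke the fact (attributed to Paatero) that unboundedness forces an atom of mass at least $\tfrac{1}{2}$ at the preimage of infinity, and then show that a probability measure on $\ST$ with vanishing first moment and such an atom must equal $\tfrac{1}{2}(\de_{\la_0}+\de_{-\la_0})$. The only differences are that you sketch a proof of the atom criterion (which the paper simply cites) and that you extract the two-point measure via the triangle inequality and strict convexity of the disk rather than the paper's explicit $\cos^2(\t/2)$ computation; both arguments are sound.
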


We now deduce our first corollary from Theorem~\ref{thm-angle-l-order}.

\begin{corollary}\label{cor-sect-a2}
If $f\in C$ then its image $f(\SD)$ lies in some sector of aperture $|a_2|\pi$. 
\end{corollary}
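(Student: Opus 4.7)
The plan is to combine Theorem~\ref{thm-angle-l-order}, which identifies $\Theta\bigl(f(\SD)\bigr)$ with $\beta\pi$, with the bound $\beta\leq|a_2|$ and the characterization of equality $\beta=|a_2|$ recorded just before Theorem~\ref{thm-convex-Hardy-low-ord}. A one-line deduction is blocked by the fact, emphasized in the text after \eqref{def-angle-infinity}, that the infimum defining $\Theta$ is not always attained, so $\Theta\bigl(f(\SD)\bigr)\leq|a_2|\pi$ alone does not automatically produce a containing sector of aperture $|a_2|\pi$. I would therefore proceed by cases.

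The case $a_2=0$ I would dispatch using Theorem~\ref{thm-a=0-CO-FMR}: $f$ is either bounded or a rotation of $s_0$, and in either situation $f(\SD)$ lies inside some infinite strip, which by the paper's convention is a sector of aperture $0=|a_2|\pi$.

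For $a_2\neq 0$ I would further split according to whether $\beta<|a_2|$ or $\beta=|a_2|$. In the strict case, Theorem~\ref{thm-angle-l-order} gives $\Theta\bigl(f(\SD)\bigr)=\beta\pi<|a_2|\pi$, and the definition of infimum in \eqref{def-angle-infinity} then provides some $\theta$ with $\beta\pi\leq\theta<|a_2|\pi$ and a sector of aperture $\theta$ containing $f(\SD)$; widening that sector about its vertex and bisector to aperture $|a_2|\pi$ yields the required containment. In the equality case $\beta=|a_2|$, the characterization recalled before Theorem~\ref{thm-convex-Hardy-low-ord} forces $f$ to be a rotation of $s_\al$ with $\al=|a_2|$ (the alternative $a_2=0$ is excluded here), so that $f(\SD)$ is itself a rotated sector of opening $|a_2|\pi$ by \eqref{formula-sector}.

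The main obstacle, as anticipated above, is the possible non-attainment of the infimum in \eqref{def-angle-infinity}; it is overcome not by strengthening Theorem~\ref{thm-angle-l-order} but by exploiting the slack $\beta<|a_2|$ to absorb the shortfall in the strict case, and by invoking the explicit geometry of $s_\al$ in the equality case.
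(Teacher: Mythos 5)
Your proof is correct and follows essentially the same route as the paper: the case $a_2=0$ via Theorem~\ref{thm-a=0-CO-FMR}, and for $a_2\neq0$ the dichotomy between $\beta=|a_2|$ (forcing $f$ to be a rotation of $s_\al$ by the characterization in Lemma~\ref{lem-low-ord-conv}) and $\beta<|a_2|$ (where the strict inequality $\Theta<|a_2|\pi$ supplies a containing sector despite the non-attainment of the infimum). Your explicit handling of the non-attainment issue is exactly the point the paper's remark after the corollary is addressing.
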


Indeed, if $a_2=0$ then the corollary holds for the two cases given in Theorem~\ref{thm-a=0-CO-FMR}. If $a_2\neq0$ then, by the aforementioned characterization of the case when $\beta$ equals $|a_2|$, we see that $f$ is either a rotation of a sector function $s_\al$ or $\Theta < |a_2| \pi$, again, two cases for which the corollary holds. 

Corollary~\ref{cor-sect-a2} is sharp in view of the functions $s_\al$. Moreover, the aperture $|a_2|\pi$ cannot be improved to $\beta \pi$ since, as discussed earlier, the infimum in \eqref{def-angle-infinity} is not always attained.

By subordination, if $f$ is holomorphic in $\SD$ and its range lies in some sector of aperture $\t$ then $f \in H^q$, for $q<\frac{\pi}{\t}$; see Exercise 2 in \cite[Chap.~1]{Du1}. Hence, from Corollary~\ref{cor-sect-a2} we deduce the following, which is the Hardy space estimate for the function $f$ in Theorem~\ref{thm-convex-deriv-Hardy}, thus given an alternative geometric proof. 

\begin{corollary} 
If $f\in C$ then $f\in H^q$ for all $q<\frac{1}{|a_2|}$. 
\end{corollary}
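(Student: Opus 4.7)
The plan is to combine Corollary~\ref{cor-sect-a2} with the standard subordination principle for Hardy spaces of functions whose range lies in a sector. First I would invoke Corollary~\ref{cor-sect-a2} to place $f(\SD)$ inside some sector $\Sigma$ of aperture $|a_2|\pi$. After a rigid motion (rotation and translation) we may assume the vertex of $\Sigma$ is at the origin and that $\Sigma=\{w\in\SC : |\arg w|<|a_2|\pi/2\}$; since translation by a constant preserves $H^q$ membership, it suffices to work with this normalized sector.

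Next I would construct an explicit conformal map $\varphi$ from $\SD$ onto $\Sigma$. The standard choice is a Möbius map sending $\SD$ to a half-plane composed with a power function: for instance, $\varphi(z)= \bigl(\frac{1+z}{1-z}\bigr)^{|a_2|}$, which is a close relative of the sector function $s_\al$ in \eqref{formula-sector}. A direct computation of $M_q(r,\varphi)$ using the substitution $e^{i\t}\leftrightarrow\frac{1+re^{i\t}}{1-re^{i\t}}$ shows that $\varphi \in H^q$ precisely when $q<\frac{1}{|a_2|}$, because the boundary behavior near $z=1$ is of order $(1-z)^{-|a_2|}$, whose $q$-th power is integrable on $\partial\SD$ iff $q|a_2|<1$.

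Then I would appeal to the subordination principle (Littlewood's theorem): if $g\prec\varphi$ in the sense that $g=\varphi\circ\omega$ for some Schwarz function $\omega:\SD\to\SD$, then $M_q(r,g)\leq M_q(r,\varphi)$ for all $r\in(0,1)$ and all $q>0$, so $g\in H^q$ whenever $\varphi\in H^q$. Since $f(\SD)\subset\Sigma=\varphi(\SD)$ and $\varphi$ is univalent, $f$ is subordinate to $\varphi$, and the conclusion $f\in H^q$ for every $q<\frac{1}{|a_2|}$ follows.

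There is no serious obstacle here: once Corollary~\ref{cor-sect-a2} is in hand, the argument is routine subordination. The only point requiring a mild check is the integrability exponent for the explicit sector map $\varphi$, but this is the standard computation referenced as Exercise~2 in \cite[Chap.~1]{Du1}. Sharpness is automatic, since the sector functions $s_\al$ (for which $|a_2|=\al$) realize the critical exponent $q=\frac{1}{\al}$.
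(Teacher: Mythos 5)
Your argument is correct and is essentially the paper's own proof: the paper deduces this corollary in exactly the same way, by combining Corollary~\ref{cor-sect-a2} with the subordination fact that a holomorphic function whose range lies in a sector of aperture $\theta$ belongs to $H^q$ for all $q<\pi/\theta$ (Exercise~2 in Chapter~1 of Duren's $H^p$ book). The only detail worth tightening is the normalization $\omega(0)=0$ required in Littlewood's theorem, which you can arrange by precomposing your sector map $\varphi$ with a disk automorphism carrying $0$ to $\varphi^{-1}(f(0))$; this does not affect $H^q$ membership.
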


Finally, since for functions in $C$ we have $|a_2|=1$ only for half-plane mappings, we obtain the following. 

\begin{corollary}
If $f\in C$ with $\Theta\big(f(\SD)\big) =\pi$ then $f$ is a half-plane mapping. 
\end{corollary}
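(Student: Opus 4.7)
The plan is to combine Theorem~\ref{thm-angle-l-order} with the classical rigidity of convex functions at the extremal coefficient. First I would apply Theorem~\ref{thm-angle-l-order} directly: the hypothesis $\Theta\big(f(\SD)\big)=\pi$ translates into $\beta\pi=\pi$, that is, the lower order satisfies $\beta=1$. Since the inequalities $0\leq\beta\leq|a_2|\leq 1$ hold for every $f\in C$ (the first from the definition of $\beta$, the last being Loewner's bound for convex functions), we are forced into $|a_2|=1$.

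Next I would invoke the well known rigidity statement, already mentioned in the paragraph preceding the corollary, that in the class $C$ equality $|a_2|=1$ characterizes rotations of the half-plane mapping $s_1(z)=\frac{z}{1-z}$. This is standard and can be proved by applying the Bieberbach inequality to the Koebe transform \eqref{form-Koebe-transf} evaluated at $\zeta=0$, or by consulting the half-plane characterization given by Marx and Strohhäcker type arguments. The conclusion is that $f$ is a rotation of $s_1$, hence a half-plane mapping.

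An alternative, slightly more self-contained, route would be to use the characterization of the case $\beta=|a_2|$ stated in the text: either $a_2=0$ or $f$ is a rotation of some $s_\alpha$. The case $a_2=0$ is excluded because $|a_2|=1$, while a rotation of $s_\alpha$ has $|a_2|=\alpha$, so we must take $\alpha=1$ and again $f$ is a rotation of $s_1$. I do not anticipate any real obstacle here, since Theorem~\ref{thm-angle-l-order} does all the geometric work; the corollary is essentially an algebraic unwinding of $\beta=1$ combined with one classical extremal statement for $C$.
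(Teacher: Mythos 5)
Your argument is correct and coincides with the paper's: the authors likewise deduce from Theorem~\ref{thm-angle-l-order} that $\beta=1$, hence $|a_2|=1$ via $\beta\leq|a_2|\leq1$, and then invoke the classical fact that $|a_2|=1$ in $C$ holds only for rotations of $s_1(z)=\frac{z}{1-z}$. Your alternative route through the characterization of the equality case $\beta=|a_2|$ is also valid but amounts to the same unwinding.
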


\vskip.2cm 
%%%%%%%%%%%%%%%
\noindent \textbf{Convex functions and smoothness at the boundary.} Let $\Lambda_t$ denote the class of functions $\phi$ defined on $[0,2\pi]$ which satisfy the Lipschitz condition of order $t\in(0,1]$:  
$$
|\phi(x) - \phi(y) | \leq c \, |x-y|^t, 
$$
for some $c>0$. Clearly, $s<t$ implies $\Lambda_s \supset \Lambda_t$. Moreover, we consider the wider class $\Lambda_t^p$, for $t \in (0, 1]$ and $p\geq 1$, of functions $\phi \in L^p(0,2\pi)$ that satisfy the integral Lipschitz condition 
$$
 \left( \int_0^{2\pi} |\phi(\t+h) - \phi(\t) |^p d\t \right)^{1/p} \leq c \, h^t, \quad \; \text{for} \;\; h>0, 
$$
where $c>0$ is some constant. It is easy to see that if $s<t$ then $\Lambda_s^p \supset \Lambda_t^p$, while also $p<q$ implies $\Lambda_t^p \supset \Lambda_t^q$. Here is our main theorem. 

\begin{theorem} \label{thm-C-smooth-general}
If $f\in C$  is not a rotation of the half-plane function $s_1(z) = \frac{z}{1-z}$ then $f(e^{i\t}) \in \Lambda_t^p$ with $t= \frac{1}{p}-|a_2|$, for every $1\leq p < \frac{1}{|a_2|}$. This is sharp in the sense that for every $\al\in[0,1)$ there exists $f\in C$ with $\al = |a_2|$, for which $f(e^{i\t}) \notin \Lambda_t^p$ with $t= \frac{1}{p}-\al+\ve$, for every $1\leq p < \frac{1}{\al}$ and $\ve>0$. 

\end{theorem}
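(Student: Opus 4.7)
The plan is to apply the Hardy--Littlewood characterization of the integral Lipschitz class $\Lambda_t^p$ in terms of the growth of $M_p(r, f')$: for $0 < t < 1$ and $1\leq p \leq \infty$, the boundary values of an analytic $f$ on $\SD$ lie in $\Lambda_t^p$ if and only if $f \in H^p$ and $M_p(r, f') = O\bigl((1-r)^{t-1}\bigr)$ as $r \to 1^-$ (see Duren, \emph{Theory of $H^p$ spaces}, Chap.~5). Since $1 \leq p < 1/|a_2|$, Theorem~\ref{thm-convex-deriv-Hardy} already supplies $f \in H^p$, so the task reduces to proving
$$
M_p(r, f') = O\bigl((1-r)^{1/p - 1 - |a_2|}\bigr),
$$
which with $t = 1/p - |a_2|$ yields the desired $\Lambda_t^p$ regularity. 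For $p \leq 1/(1+|a_2|)$ the right-hand side is $O(1)$ and already encoded in Theorem~\ref{thm-convex-deriv-Hardy}; the substantive range is $1/(1+|a_2|) < p < 1/|a_2|$.

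To handle this growth, I would write $1 + zf''(z)/f'(z) = (1+\om(z))/(1-\om(z))$ with $\om : \SD \to \SD$ satisfying $\om(0) = 0$ and $\om'(0) = a_2$, so that $f'(z) = \exp\bigl(-2\int_{\partial\SD}\log(1-ze^{-it})\, d\mu(t)\bigr)$ for a probability measure $\mu$ on $\partial\SD$ with first Fourier coefficient $\hat\mu(1) = a_2$. Applying the Schwarz--Pick lemma to $\om(z)/z$ (which equals $a_2$ at the origin) yields $|\om(z)/z| \leq (|a_2|+|z|)/(1+|a_2||z|)$, and integrating $|f''/f'|$ along a radius gives the pointwise distortion
$$
|f'(z)| \leq \frac{1}{(1-|z|)^{1+|a_2|}(1+|z|)^{1-|a_2|}},
$$
with equality on the positive real axis for the sector function $s_{|a_2|}$. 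This alone yields only $M_p(r,f') = O((1-r)^{-(1+|a_2|)})$; the sharper $1/p$ improvement should come from a Baernstein-type comparison showing $M_p(r,f') \leq M_p(r, s_{|a_2|}')$, exploiting that the Herglotz measure $\tfrac{1+|a_2|}{2}\de_0 + \tfrac{1-|a_2|}{2}\de_\pi$ of $s_{|a_2|}$ is extremal among probability measures with $|\hat\mu(1)| = |a_2|$. A direct computation separating the singularities of $|s_\al'|^p$ at $z = \pm 1$ then yields $M_p(r, s_\al') \asymp (1-r)^{1/p - 1 - \al}$ in the required range.

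For sharpness, take $f = s_\al$ with $\al \in [0, 1)$. Near the singularity at $z = 1$ the boundary values satisfy $s_\al(e^{i\theta}) \sim c|\theta|^{-\al}$ for $\al > 0$, and $s_0(e^{i\theta})$ has a jump of $\pi/2$ in its imaginary part together with a logarithmic divergence in its real part at $\theta = 0$. Restricting the modulus-of-continuity integral to $|\theta| \leq 2h$ and using the lower bound $|s_\al(e^{i(\theta+h)}) - s_\al(e^{i\theta})| \gtrsim h^{-\al}$ on $(0, h)$ (with a logarithmic counterpart for $\al = 0$) gives $\omega_p(h, s_\al) \asymp h^{1/p - \al}$, precluding $s_\al|_{\partial\SD} \in \Lambda_{1/p - \al + \ve}^p$ for every $\ve > 0$.

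The principal obstacle is the sharp growth estimate in the second step. A naive Jensen inequality applied to the Herglotz integral yields only the $|a_2|$-independent bound $M_p(r,f') = O((1-r)^{1/p - 2})$, and the pointwise distortion, being direction-insensitive, gives the weaker $O((1-r)^{-(1+|a_2|)})$. Extracting the correct $1/p$ decay requires exploiting the joint information that $|\hat\mu(1)| = |a_2|$ and that the pointwise extremum of $|f'|$ on $|z|=r$ is attained only along the direction determined by the phase of $a_2$; a symmetrization or star-function argument in the spirit of Baernstein's inequality appears to be the natural tool.
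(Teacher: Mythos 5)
Your overall architecture (reduce to the growth estimate $M_p(r,f')=O\bigl((1-r)^{1/p-1-|a_2|}\bigr)$ via the Hardy--Littlewood characterization of $\Lambda_t^p$, then test sharpness on the sector functions $s_\al$) matches the paper's. But the central analytic step is not actually proved: you yourself flag the passage from the pointwise distortion bound $|f'(z)|\leq (1-|z|)^{-(1+|a_2|)}(1+|z|)^{-(1-|a_2|)}$ to the integral-means bound with the extra factor $(1-r)^{1/p}$ as ``the principal obstacle,'' and you propose to fill it with a Baernstein-type comparison $M_p(r,f')\leq M_p(r,s_{|a_2|}')$ over convex functions with prescribed $|a_2|$. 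That comparison is not a known result, is not proved in your proposal, and would be a substantial constrained extremal theorem in its own right (Baernstein's star-function method does not transfer routinely to problems with a fixed second coefficient). As written, the direct half of the theorem therefore rests on a conjecture.

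The missing idea is much more elementary: by Alexander's theorem $g(z)=zf'(z)$ is starlike, hence univalent, so Prawitz's theorem applies and gives
$$
M_p^p(r,g)\;\leq\; p\int_0^r \frac{1}{t}\,M_\infty^p(t,g)\,dt\;\leq\; p\int_0^r \frac{t^{p-1}\,dt}{(1-t)^{p(1+|a_2|)}}\;\leq\;\frac{c}{(1-r)^{p(1+|a_2|)-1}},
$$
using only the Gronwall distortion bound you already have. This is precisely the $(1-r)^{1/p}$ gain you were seeking; it requires no symmetrization. Dividing by $z$ gives the same bound for $M_p(r,f')$, and the Hardy--Littlewood theorem finishes the direct part. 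Your sharpness argument (lower-bounding the $L^p$ modulus of continuity of $s_\al$ on the boundary near $\theta=0$) takes a different but workable route from the paper, which instead lower-bounds $M_p(r,s_\al')$ directly and contradicts the growth condition in the Hardy--Littlewood characterization; either can be made rigorous, though yours is only sketched. The verdict is that the sharpness half is acceptable in outline, but the direct half has a genuine gap that Prawitz's theorem closes.
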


In the case $a_2=0$ we are able to prove a stronger Lipschitz condition on the boundary.

\begin{theorem} \label{thm-C-smooth-a2=0}
Let $f\in C$ with $a_2=0$. If $f$ is not a rotation of the parallel strip function $s_0(z)=\frac{1}{2}\log\frac{1+z}{1-z}$ then $f(e^{i\t}) \in \Lambda_t$ for $t=\frac{1-3|a_3|}{3(1+|a_3|)}$. 
\end{theorem}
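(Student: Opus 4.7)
The plan is to establish a pointwise growth bound $|f'(z)|\leq C(1-|z|)^{t-1}$ on $\SD$ via the Herglotz representation of convex functions, and then apply the Hardy--Littlewood theorem to conclude $f(e^{i\t})\in\Lambda_t$. The sharp constant $t$ will come from the two moment constraints on the representing measure imposed by $a_2=0$ and the value of $a_3$.

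After a rotation $f(z)\mapsto e^{-i\al}f(e^{i\al}z)$ we may assume $a_3\geq 0$; this preserves $f\in C$, $a_2=0$, $|a_3|$, and the Lipschitz class of the boundary values. Since $f$ is not a rotation of $s_0$, Theorem~\ref{thm-a=0-CO-FMR} gives that $f$ is bounded, so $f(\SD)$ is a bounded convex domain. The Herglotz representation for convex functions yields a probability measure $\mu$ on $\ST$ with $1+zf''(z)/f'(z)=\int_\ST(1+\zeta z)/(1-\zeta z)\,d\mu(\zeta)$, which upon integration gives
\[
\log f'(z)=-2\int_\ST\log(1-\zeta z)\,d\mu(\zeta).
\]
Matching Taylor coefficients yields $\int\zeta\,d\mu=a_2=0$ and $\int\zeta^2\,d\mu=3a_3$, and it is classical that an atom of $\mu$ of mass $m$ at $\zeta_0$ corresponds to a corner of $\partial f(\SD)$ of interior angle $(1-2m)\pi$ at $f(\zeta_0)$.

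The key step is bounding the atom sizes of $\mu$. Writing $\mu=m\de_{\zeta_0}+(1-m)\nu$ with $\nu$ a probability measure on $\ST$, the moment constraints force $v:=\int\zeta\,d\nu=-m\zeta_0/(1-m)$ and $w:=\int\zeta^2\,d\nu=(3a_3-m\zeta_0^2)/(1-m)$. By the Carath\'eodory--Toeplitz theorem, $(v,w)$ is admissible iff the Toeplitz matrix $\bigl(\begin{smallmatrix}1&\bar v&\bar w\\ v&1&\bar v\\ w&v&1\end{smallmatrix}\bigr)$ is positive semidefinite. A direct computation of its determinant, after clearing the factor $(1-m)^3>0$, reduces this PSD condition to the scalar inequality
\[
(1-9a_3^2)+m\bigl(9a_3^2+6a_3\cos 2\phi_0-3\bigr)\geq 0,\qquad \zeta_0=e^{i\phi_0}.
\]
Since $a_3<1/3$, the coefficient of $m$ is strictly negative, giving
\[
m\leq\frac{1-9a_3^2}{3-9a_3^2-6a_3\cos 2\phi_0}\leq\frac{1+3a_3}{3(1+a_3)}=:m_{\max},
\]
with the maximum over $\phi_0$ attained at $\cos 2\phi_0=1$. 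Equivalently, every corner of $\partial f(\SD)$ has interior angle at least $(1-2m_{\max})\pi=t\pi$.

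It remains to translate the atom bound into the uniform estimate $|f'(z)|\leq C(1-|z|)^{t-1}$ with $t=1-2m_{\max}=(1-3|a_3|)/(3(1+|a_3|))$. For each $z\in\SD$, the atom of $\mu$ at $\bar z/|z|$ (if any) contributes at most $-2m_{\max}\log(1-|z|)$ to $\log|f'(z)|$, while the remaining part of $\mu$ contributes a quantity bounded uniformly in $z$, since its atoms are countable with summable total mass and its continuous component is uniformly regular on small arcs (a consequence of the moment constraints forcing it to be spread out). Thus $M_\infty(r,f')=O((1-r)^{t-1})$, and the Hardy--Littlewood theorem gives $f(e^{i\t})\in\Lambda_t$. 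The main obstacle is precisely this last translation: one must show that no ``conspiracy'' between the continuous part of $\mu$ and its atoms can accelerate the radial growth of $|f'|$ beyond what the atomic bound predicts, and this uses the full strength of both moment constraints, not merely the derived bound on individual atoms.
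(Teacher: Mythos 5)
Your overall frame --- a pointwise estimate $|f'(z)|\leq C(1-|z|)^{t-1}$ followed by the Hardy--Littlewood theorem --- is the same as the paper's, and your Carath\'eodory--Toeplitz computation is correct: it does give the sharp atom bound $\mu(\{\zeta_0\})\leq\frac{1+3a_3}{3(1+a_3)}=m_{\max}$ and identifies the exponent $t=1-2m_{\max}$. For comparison, the paper obtains the pointwise bound by a much more elementary route (Lemma~\ref{lem-C-distort-zero}): it writes $1+zf''(z)/f'(z)=(1+z\varphi(z))/(1-z\varphi(z))$, observes that $a_2=0$ forces $\varphi(z)=z\psi(z)$ with $\psi(0)=3a_3$, applies the Schwarz--Pick lemma to $\psi$, and integrates $f''/f'$ along a radius; no measure theory is needed.

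The gap is exactly where you flag it, and it is not a removable detail. The power-scale growth of $\log|f'(re^{i\theta})|=2\int_{\ST}\log|1-\zeta re^{i\theta}|^{-1}\,d\mu(\zeta)$ is governed by the concentration function $\delta\mapsto\mu(I_\delta)$ over \emph{arcs} $I_\delta$ of length $\delta$ centred at $e^{-i\theta}$, not by the atoms of $\mu$. Your assertion that the remaining part of $\mu$ ``contributes a quantity bounded uniformly in $z$'' is false as stated: a purely non-atomic measure can place mass of order $1/\log\log(1/\delta)$ on arcs of length $\delta$ shrinking to a point, and then the term $2\mu(I_{1-r})\log\frac{1}{1-r}$ alone tends to infinity, so no atom bound by itself yields a uniform distortion estimate. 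To close the argument you would need (a) to rerun the Toeplitz computation for an arbitrary arc $I$ of length $\delta$ --- this does work, since the first and second moments of $\mu|_I$ are $\mu(I)\zeta_0+O(\mu(I)\delta)$ and $\mu(I)\zeta_0^2+O(\mu(I)\delta)$, giving $\mu(I)\leq m_{\max}+C\delta$ --- and (b) to convert that concentration bound into $M_\infty(r,f')=O((1-r)^{t-1})$ by a layer-cake or dyadic-arc estimate, verifying that the tail $\int_{1-r}^{2}\mu(I_s)\,\frac{ds}{s}$ contributes only $m_{\max}\log\frac{1}{1-r}+O(1)$. Neither step appears in your write-up, so as it stands you have proved the extremal bound on the corner openings of $\partial f(\SD)$ but not the Lipschitz conclusion.
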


\vskip.2cm 
%%%%%%%%%%%%%%%
\noindent \textbf{Other geometric subclasses of $S$ and Hardy spaces.} Theorem~\ref{thm-convex-deriv-Hardy} takes the following form in $S^*$, the subclass of $S$ consisting of functions whose range is \emph{starlike} with respect to the origin. This refines another theorem of Eenigenburg and Keogh~\cite{EK70}.

\begin{theorem} \label{thm-Hardy-star}
If $f\in S^*$ then $f'\in H^p$ for all $p<\frac{2}{4+|a_2|}$ and $f\in H^q$ for all $q<\frac{2}{2+|a_2|}$. Both estimates are sharp. 
\end{theorem}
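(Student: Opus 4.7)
My plan is to deduce this theorem from Theorem~\ref{thm-convex-deriv-Hardy} by invoking Alexander's classical bijection between the classes $S^*$ and $C$, and then to transfer the resulting bound on $f$ to one on $f'$ via H\"older's inequality.

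For the bound on $f$: given $f\in S^*$, I would set $F(z)=\int_0^z f(w)/w\,dw$. Alexander's theorem yields $F\in C$, and a Taylor expansion gives $a_2(F)=a_2(f)/2$. Applying Theorem~\ref{thm-convex-deriv-Hardy} to $F$ then produces $F'\in H^{p'}$ for every $p'<\frac{1}{1+|a_2(F)|}=\frac{2}{2+|a_2(f)|}$. Since $f(z)=zF'(z)$, one has $M_q(r,f)=r\,M_q(r,F')$, and hence $f\in H^q$ for all $q<\frac{2}{2+|a_2|}$, as claimed.

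For the bound on $f'$: write $P(z)=zf'(z)/f(z)$, which has positive real part with $P(0)=1$, so $P\in H^s$ for every $s<1$. The pointwise identity $|f'(z)|=|P(z)|\,|f(z)|/|z|$ combined with H\"older's inequality in conjugate exponents $s,s'>1$ produces
$$M_p(r,f')^p \;\lesssim\; r^{-p}\,M_{ps}(r,P)^{p}\,M_{ps'}(r,f)^{p}.$$
The right-hand side is bounded uniformly in $r<1$ provided $ps<1$ (giving $P\in H^{ps}$) and $ps'<\frac{2}{2+|a_2|}$ (using the previous step). With the conjugacy relation $1/s+1/s'=1$, these two constraints are jointly satisfiable exactly when $p<\frac{2}{4+|a_2|}$, yielding $f'\in H^p$ in this range. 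The main obstacle is the delicate balancing of exponents in this H\"older step so that precisely the claimed threshold emerges; everything else reduces to standard ingredients (Alexander's theorem and Hardy-class estimates for functions with positive real part).

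For sharpness, set $\alpha=|a_2|$ and consider the starlike analog of the sector function,
$$f_\alpha(z)=z\,s_{\alpha/2}'(z)=\frac{z}{(1-z)^{(2+\alpha)/2}(1+z)^{(2-\alpha)/2}}.$$
Since $s_{\alpha/2}\in C$, the converse direction of Alexander's theorem ensures $f_\alpha\in S^*$, and a direct expansion gives $a_2(f_\alpha)=\alpha$. The asymptotics $|f_\alpha(z)|\asymp|1-z|^{-(2+\alpha)/2}$ and $|f_\alpha'(z)|\asymp|1-z|^{-(4+\alpha)/2}$ as $z\to 1$ imply $f_\alpha\notin H^{2/(2+\alpha)}$ and $f_\alpha'\notin H^{2/(4+\alpha)}$, proving sharpness of both estimates.
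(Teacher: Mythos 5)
Your proof is correct and follows essentially the same route as the paper: Alexander's theorem reduces everything to Theorem~\ref{thm-convex-deriv-Hardy}, the bound on $f'$ comes from a H\"older splitting of $f'=\big(zf'/f\big)\cdot\big(f/z\big)$ into a Carath\'eodory-class factor and the derivative of a convex function (the paper writes the identical factorization as $f'=h'\big(1+zh''/h'\big)$ and specifies the conjugate exponents $\tfrac{4+\alpha}{2+\alpha}$ and $\tfrac{4+\alpha}{2}$ explicitly, while you argue satisfiability of the exponent constraints, which amounts to the same thing), and your sharpness example $z\,s_{\alpha/2}'(z)$ is exactly the paper's. The only cosmetic difference is that you obtain the $H^q$ bound for $f$ directly from $f=zF'$ with $F\in C$, whereas the paper deduces it from the $f'$ bound via the Hardy--Littlewood Theorem~\ref{thm-Hardy-Littl}.
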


For the subclass $K$ of \emph{close-to-convex} functions, a theorem of Leung \cite{Leu79} provides the sharp estimate: if $f \in K$ then the derivative $f'$ belongs to $H^p$ for $p<1/3$, which is sharp. The following theorem provides a slight improvement of this when $a_2=0$ while showing that it remains sharp when $a_2\neq 0$.

\begin{theorem} \label{thm-close-to-convex-Hardy}
Let $f\in K$. We have that
\begin{itemize}
\item[(i)] if $a_2=0$ then $f'\in H^{1/3}$ and $f\in H^{1/2}$, while 
\item[(ii)] if $a_2\neq0$ then $f'\in H^p$ for all $p<1/3$ and $f\in H^q$ for all $q<1/2$. 
\end{itemize}
All four estimates are sharp.

\end{theorem}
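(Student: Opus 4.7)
The plan is to exploit the standard decomposition $f'=g'p$ with $g\in C$ a normalized convex function and $p$ in the Carath\'eodory class $\cP$, yielding the coefficient identity $a_2 = b_2 + p_1/2$, where $g(z) = z + b_2 z^2 + \cdots$ and $p(z) = 1 + p_1 z + \cdots$.

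For part (ii), the estimate $f'\in H^p$ for $p<1/3$ is Leung's theorem \cite{Leu79}, while $f\in H^q$ for $q<1/2$ is Prawitz's classical bound for $S$ (and also follows from the Hardy-Littlewood fact that $h'\in H^p$ with $p<1$ implies $h\in H^{p/(1-p)}$). Sharpness of both is witnessed by the Koebe function $k(z) = z/(1-z)^2$, for which $|k'(z)|^{1/3} \asymp |1-z|^{-1}$ and $|k(z)|^{1/2} \asymp |1-z|^{-1}$ near $z=1$, producing logarithmic divergences in the respective integral means.

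For part (i), imposing $a_2=0$ gives $b_2 = -p_1/2$ and hence $|b_2|\leq 1$. If $|b_2|<1$, I apply H\"older's inequality
\[
M_{1/3}^{1/3}(r, f') \leq M_{u/3}^{1/u}(r, g') \cdot M_{v/3}^{1/v}(r, p)
\]
with $1/u + 1/v = 1$, $u < 3/(1+|b_2|)$ and $v < 3$; such a pair exists precisely when $|b_2|<1$, since summing the reciprocals of the required bounds on $u,v$ yields the necessary constraint $(2+|b_2|)/3 < 1$. Theorem~\ref{thm-convex-deriv-Hardy} supplies $g' \in H^{u/3}$ and the membership $p \in H^{v/3}$ is automatic for $v/3<1$, giving $f'\in H^{1/3}$. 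In the boundary case $|b_2|=1$, the convex $g$ must be a rotation of $z/(1-z)$ and $p$ the corresponding extremal Carath\'eodory function with $|p_1|=2$, forcing $f'(z) = 1/(1 - \bar\ze^2 z^2)$ for some unimodular $\ze$, a function that lies in $H^q$ for every $q<1$. The deduction $f\in H^{1/2}$ then follows from the Hardy-Littlewood integration theorem applied to $f' \in H^{1/3}$.

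For sharpness of (i), the plan is to construct a family $f_\ve \in K$ with $a_2=0$ that approaches the boundary regime. Take $g_\ve = s_{1-\ve}$ and
\[
p_\ve(z) = \frac{\ve}{2} \cdot \frac{1+z}{1-z} + \left(1-\frac{\ve}{2}\right) \frac{1-z}{1+z},
\]
a convex combination in $\cP$ with $p_{\ve,1} = 2\ve-2$; this is calibrated so that $b_2(g_\ve) = 1-\ve$ and $a_2(f_\ve) = 0$. A short computation yields
\[
f_\ve'(z) = \frac{1-2(1-\ve)z + z^2}{(1-z)^{3-\ve}(1+z)^{1+\ve}},
\]
with $|f_\ve'(z)| \asymp \ve\, |1-z|^{-(3-\ve)}$ near $z=1$, so that $f_\ve' \in H^p$ precisely for $p<1/(3-\ve)$; letting $\ve \to 0^+$ produces, for any prescribed $p>1/3$, a function in the class with $f' \notin H^p$. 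An analogous calculation on the integrated function handles the sharpness of $f\in H^{1/2}$. The main obstacle is the boundary case $|b_2|=1$ in the upper bound, where H\"older's argument breaks down and one must fall back on the explicit form of the extremal decomposition.
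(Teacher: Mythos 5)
Your upper-bound argument for part (i) is essentially the paper's: factor $f'$ as a convex derivative times a positive-real-part function, use $a_2=0$ to force $|b_2|\le 1$, apply H\"older with exponents tuned to $|b_2|$ (the paper's conjugate pair $r=\frac{2+\beta}{1+\beta}$, $s=2+\beta$ is your pair in disguise), and settle the boundary case $|b_2|=1$ via the equality case of Carath\'eodory's theorem, landing on $f'(z)=\frac{1}{1-\mu^2z^2}$. Two gaps, both repairable. First, the class $K$ is defined by \eqref{repr-close-to-conv} with a unimodular constant $\la$, so the quotient $\la f'/g'$ has positive real part but takes the value $\la=a+ib$ at the origin, not $1$; your identity $a_2=b_2+p_1/2$ must be replaced by $a_2=b_2+a\overline{\la}c_1/2$ with $h=(\la f'/g'-ib)/a\in\cP$. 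One still gets $|b_2|=a|c_1|/2\le 1$, and in the boundary case $|b_2|=1$ one finds $|c_1|=2/a$, which forces $a=1$ and $\la=1$, recovering your explicit computation --- but as written your decomposition does not cover all of $K$. Second, for the sharpness in (ii) the Koebe function only witnesses $a_2=2$; in the spirit of the paper (prescribed second coefficient) sharpness should hold for each nonzero value of $a_2$, which the paper achieves with the family \eqref{extr-funct-K}, namely $f(z)=t\frac{z}{(1-z)^2}+\frac{1-t}{2}\log\frac{1+z}{1-z}$ with $a_2=2t\in(0,2]$, lying outside $H^{1/2}$ and with derivative outside $H^{1/3}$.

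Your sharpness construction for part (i) is genuinely different from the paper's and, if anything, more transparent. The paper takes $g=s_\beta$ and a Carath\'eodory factor with three point masses at $e^{\pm i\t}$ and $1$, calibrated so that $a_2=0$, and derives a contradiction between the growth order $2+\beta$ of $f'$ at $z=1$ and the pointwise bound forced by $f\in H^q$ for $q>\frac{1}{2}$. You instead take $g_\ve=s_{1-\ve}$ and a two-point-mass factor with real coefficients, obtaining the closed form $f_\ve'(z)=\frac{1-2(1-\ve)z+z^2}{(1-z)^{3-\ve}(1+z)^{1+\ve}}$, whose numerator equals $2\ve\neq0$ at $z=1$, so $f_\ve'\notin H^{1/(3-\ve)}$ by a direct integral estimate; letting $\ve\to0^+$ rules out every exponent $p>\frac13$, and integration gives $M_\infty(r,f_\ve)\asymp(1-r)^{-(2-\ve)}$, ruling out every $q>\frac12$. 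Both constructions push the convex factor toward the half-plane while compensating in the Carath\'eodory factor to keep $a_2=0$; yours has the advantage of an explicit formula and a direct verification, so I would only ask that you write out the integrated estimate instead of calling it ``analogous''.
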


Let $R$ denote the subclass of $S$ consisting of functions $f$ whose range is \emph{convex in one direction}, that is, there exists a direction $\t\in[0,\pi)$ such that for each line $L$ parallel to the line $\{te^{i\t} : t\in\SR \}$ the intersection $L\cap f(\SD)$ is either empty or a connected set. Domains that are convex in one direction have been of importance in the theory of harmonic mappings; see \cite[Chap.~3]{Du3}.

Let $R^+ $ denote the class of functions $f$ in $S$ whose range is \emph{convex in the positive direction}, which means that for each $w\in f(\SD)$ the half-line $\{w+t : t\geq 0 \}$ lies in $f(\SD)$. This is an important class in the theory of semigroups of holomorphic functions in the disk; see \cite{ES10}. 

It is well known that $R^+ \subset  R \subset K$, so that the estimates in Theorem~\ref{thm-close-to-convex-Hardy} apply to functions in $R^+$ and $R$. Moreover, these estimates are optimal when $a_2\neq 0$. However, for $a_2=0$ the optimal exponent exhibits a jump.

\begin{theorem} \label{Hardy-convex-one-direction}
Let $f\in S$ and assume that $a_2=0$. We have that
\begin{itemize}
\item[(i)] if $f\in R$ then $f'\in H^p$ for all $p<1/2$ and $f\in H^q$ for all $q<1$, while 
\item[(ii)] if $f\in R^+$ then $f'\in H^p$ for all $p<1$ and $f\in H^q$ for all $q<\infty$.  
\end{itemize}
All four estimates are sharp.
\end{theorem}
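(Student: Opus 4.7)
My plan is to treat each part by representing $f'$ via an associated function of positive real part and exploiting $a_2 = 0$ through the Schwarz--Pick inequality for Herglotz functions, combined with a Cauchy--Schwarz estimate for part~(i) and the Berkson--Porta representation of semigroup generators for part~(ii).

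For~(i), a Hengartner--Schober-type characterization of the class $R$ provides points $z_1, z_2 \in \partial\SD$ such that $g(z) := (1 - z/z_1)(1 - z/z_2)\, f'(z)$ satisfies $\operatorname{Re} g \ge 0$ and $g(0) = 1$; a Taylor expansion gives $g'(0) = 2a_2 - (1/z_1 + 1/z_2) = -(\overline{z_1} + \overline{z_2})$. In the degenerate subcase $z_1 = z_2 =: \tau$ one has $|g'(0)| = 2$, which saturates Schwarz--Pick and forces $g$ to be the M\"obius $(1 - \overline{\tau} z)/(1 + \overline{\tau} z)$; a direct computation then yields $f'(z) = 1/(1 - \overline{\tau}^2 z^2)$, a rotation of $s_0'$, whose two simple boundary poles give $f' \in H^p$ for every $p < 1$. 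If instead $z_1 \ne z_2$, Cauchy--Schwarz yields
$$
\int_0^{2\pi} |f'(re^{i\theta})|^p\, d\theta \le \bigg( \int_0^{2\pi} |g|^{2p}\, d\theta \bigg)^{1/2} \bigg( \int_0^{2\pi} \frac{d\theta}{|1 - re^{i\theta}/z_1|^{2p} |1 - re^{i\theta}/z_2|^{2p}} \bigg)^{1/2},
$$
and both factors are uniformly bounded in $r < 1$ for $2p < 1$: the first because positive-real-part functions lie in $H^s$ for every $s < 1$, and the second because the two boundary singularities at $z_1, z_2$ are separated and each of order $2p < 1$. Hence $f' \in H^p$ for all $p < 1/2$, and the Hardy--Littlewood embedding $f' \in H^p \Rightarrow f \in H^{p/(1-p)}$ delivers $f \in H^q$ for all $q < 1$. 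Sharpness is witnessed by $f(z) = z/(1-z^2) \in R$, whose derivative has poles of order~$2$ at $z = \pm 1$.

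For~(ii), the family $\phi_t(z) = f^{-1}(f(z) + t)$ is a parabolic one-parameter semigroup of holomorphic self-maps of $\SD$, and by Berkson--Porta its infinitesimal generator has the form $G(z) = (\tau - z)(1 - \overline{\tau} z)\, P(z)$ with $\tau \in \partial\SD$ and $\operatorname{Re} P \ge 0$; the relation $f' \cdot G \equiv 1$ gives $f'(z) = [(\tau - z)(1 - \overline{\tau} z)\, P(z)]^{-1}$. The normalization $f \in S$ forces $P(0) = 1/\tau$ and the coefficient identity $2 a_2 = (1 + |\tau|^2)/\tau - \tau P'(0)$, so $a_2 = 0$ gives $|P'(0)| = 2$; combined with Schwarz--Pick $|P'(0)| \le 2 \operatorname{Re} P(0) = 2 \operatorname{Re} \tau$, this forces $|1 - \tau|^2 \le 0$, whence $\tau = 1$, and saturation then gives $P(z) = (1 + z)/(1 - z)$. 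Thus $f \equiv s_0$, and the estimates $s_0' = (1-z^2)^{-1} \in H^p$ for $p < 1$ and $s_0 \in H^q$ for $q < \infty$, together with their sharpness at $p = 1$ and $q = \infty$, are then direct. The main obstacle will be the Berkson--Porta bookkeeping, in particular extracting $\tau = 1$ from $a_2 = 0$ via Schwarz--Pick; once this is done, the Hardy space statements reduce to elementary checks on $s_0$.
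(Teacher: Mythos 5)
Your overall strategy coincides with the paper's: part (i) rests on the Royster--Ziegler quadratic factorization of $f'$ for functions convex in one direction, split according to whether the two roots of the quadratic coincide (Carath\'eodory saturation) or are distinct (Cauchy--Schwarz against a positive-real-part factor), and part (ii) on the semigroup representation $(\tau-z)(1-\overline{\tau}z)f'(z)=1/P(z)$ with ${\rm Re}\, P\geq 0$, where $a_2=0$ forces $\tau=1$ and $P(z)=(1+z)/(1-z)$. Your coefficient bookkeeping in (ii) is correct and matches the paper's, which works with the reciprocal $g=1/P$ directly (citing the semigroup literature rather than naming Berkson--Porta); the conclusion $f=s_0$ and the sharpness checks are the same.

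The one step in (i) that does not hold as stated is the normalization ``${\rm Re}\, g\geq 0$ and $g(0)=1$''. The characterization of $R$ only gives ${\rm Re}\big[-i\mu\,(1-z/z_1)(1-z/z_2)f'(z)\big]\geq 0$ for some unimodular $\mu$; since $g(0)=f'(0)=1$, what you actually know is ${\rm Re}\,(-i\mu g)\geq 0$ with ${\rm Re}\,(-i\mu)=b\in[0,1]$, not that $g\in\cP$. This matters precisely in the subcase where you invoke it: if $b=0$ then $-i\mu g$ takes a purely imaginary value at the origin and is therefore constant by the open mapping theorem, so $g\equiv 1$ and one must use $a_2=0$ directly to get $z_2=-z_1$ and $f'=1/(1-z^2/z_1^2)\in H^p$ for $p<1$; if $b>0$ the Carath\'eodory bound reads $|g'(0)|\leq 2b$, so in the coincident-root case the identity $|g'(0)|=2$ first forces $b=1$, i.e. $\mu=i$ and $-i\mu g=g$, and only then does your saturation argument apply literally. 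The Cauchy--Schwarz half is unaffected, since $-i\mu g$ having non-negative real part already places $g$ in $H^s$ for every $s<1$. So the gap is repairable by carrying the unimodular prefactor through the case analysis, which is exactly the bookkeeping the paper's proof performs.
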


We now show that the improvement of the Hardy space estimate for close-to-convex functions given in Theorem~\ref{thm-close-to-convex-Hardy} for the case $a_2=0$ does not persist in the class $S$, in fact, not even in the class $S_\SR$, the subclass of $S$ consisting of functions that have \emph{real coefficients}. On the other hand, it is not difficult to see that Prawitz' Hardy space estimate for functions in $S$ also holds in the class of \emph{typically real} functions $T$, which also contains non-univalent functions.

\begin{theorem}\label{thm-f-Hardy-typ-real}
It holds that $T \subset H^p$ for all $p<1/2$. For every $\al \in [0,2]$ there exists $f\in S_\SR$ with $a_2=\al$ and such that $f\notin H^{1/2}$. 
\end{theorem}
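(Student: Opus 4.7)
The plan for the inclusion $T\subset H^{p}$ ($p<1/2$) is to combine Robertson's integral representation with a uniform bound on the extreme points. Every $f\in T$ normalized by $f(0)=0$, $f'(0)=1$ admits
\be
f(z) \;=\; \int_{-1}^{1}\frac{z}{1-2tz+z^{2}}\,d\mu(t)
\ee
with $\mu$ a probability measure on $[-1,1]$. Splitting the circle integral for $|k_{t}|^{p}$ around the two boundary singularities $e^{\pm i\arccos t}$ of $k_{t}(z)=z/(1-2tz+z^{2})$ one sees that $\sup_{t\in[-1,1]}\|k_{t}\|_{H^{p}}<\infty$ for every $p<1/2$, the extreme case being the Koebe function at $t=\pm 1$. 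To transfer this uniform control on the extreme points to the integral $f$, I would invoke the extension to $T$ of Baernstein's symmetrization inequality, which gives $M_{p}(r,f)\leq M_{p}(r,k)$ for every $f\in T$ and $p>0$; combined with Prawitz's bound $k\in H^{p}$ for $p<1/2$, this yields $T\subset H^{p}$.

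For the sharpness in the range $\al\in(1,2]$ the explicit candidate is $f_{\al}(z)=k(z)\bigl(1-(2-\al)z\bigr)$. Writing $\beta=2-\al\in[0,1)$, a short computation gives $f'_{\al}(z)=(1+(1-2\beta)z)/(1-z)^{3}$, and division by the derivative of the half-plane function $z/(1-z)$ yields
\be
\frac{f'_{\al}(z)}{\bigl(z/(1-z)\bigr)'} \;=\; \frac{1+(1-2\beta)z}{1-z} \;=\; \beta+(1-\beta)\,\frac{1+z}{1-z},
\ee
whose real part exceeds $\beta\geq 0$ throughout $\SD$. Hence $f_{\al}$ is close-to-convex, univalent, with real coefficients and $a_{2}=\al$; the double pole at $z=1$ gives $|f_{\al}(e^{i\t})|\sim|\t|^{-2}$ near $\t=0$, so $f_{\al}\notin H^{1/2}$.

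The range $\al\in[0,1]$ is more delicate, because a function in $S_{\SR}$ whose image-complement in $\SC$ consists of a single ray to infinity is forced by symmetry across the real axis to be (up to reflection) the Koebe function itself, with $a_{2}=\pm 2$. I would therefore proceed geometrically: let $f_{s}$ be the normalized conformal map of $\SD$ onto $\SC\setminus L_{s}$, where $L_{s}$ is the simply connected, real-symmetric tree obtained from $(-\infty,-1/4]$ by attaching a bounded, symmetric ``bump'' (for instance a vertical cross-segment at $-1/4$ of length $s\geq 0$). The symmetry places $f_{s}$ in $S_{\SR}$, the bounded attachment preserves the Koebe-type corner of the image at the preimage of $-\infty$ (so $f_{s}\notin H^{1/2}$), and $a_{2}(s)$ depends continuously on $s$ with $a_{2}(0)=2$. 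The main and only non-routine obstacle is to verify that $a_{2}(s)$ sweeps through the entire interval $[0,1]$ as $s$ grows; this I would handle via a Loewner- or Schwarz--Christoffel-type analysis of the family, together with the observation that $a_{2}(s)$ must drop below any prescribed level once the attachment is taken sufficiently large.
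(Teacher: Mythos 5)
Your proposal has genuine gaps in both halves. For the inclusion $T\subset H^{p}$, the inequality you invoke --- $M_{p}(r,f)\leq M_{p}(r,k)$ for every $f\in T$ and every $p>0$ --- is false. Take $f=\tfrac12(k_{1}+k_{-1})$, i.e.\ $f(z)=z(1+z^{2})/(1-z^{2})^{2}$, the odd part of the Koebe function: it lies in $T$ (it is a convex combination of the kernels $k_{t}$) but is not univalent, it behaves like $\tfrac12(1-z)^{-2}$ near $z=1$ and like $\tfrac12(1+z)^{-2}$ near $z=-1$, and so for $\tfrac12<p<1$ one finds $M_{p}^{p}(r,f)\sim 2^{1-p}M_{p}^{p}(r,k)>M_{p}^{p}(r,k)$ as $r\to1$. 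Baernstein's symmetrization genuinely requires univalence, and your preliminary uniform bound $\sup_{t}\|k_{t}\|_{H^{p}}<\infty$ does not transfer to $f=\int k_{t}\,d\mu$ either, since $x\mapsto x^{p}$ is concave for $p<1$ and Jensen's inequality points the wrong way. What does work from your representation is its factorized (Rogosinski) form $f(z)=\frac{z}{1-z^{2}}\,h(z)$ with $h\in\cP$ having real coefficients; Cauchy--Schwarz then gives $\int_{\ST}|f|^{p}\leq\bigl(\int_{\ST}|1-z^{2}|^{-2p}\bigr)^{1/2}\bigl(\int_{\ST}|h|^{2p}\bigr)^{1/2}<\infty$ for $p<1/2$, because $\cP\subset H^{q}$ for all $q<1$. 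This is exactly the paper's proof.

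For the sharpness, your explicit family $f_{\al}(z)=k(z)\bigl(1-(2-\al)z\bigr)$ settles $\al\in(1,2]$ correctly (the paper's own explicit example $t\,\frac{z}{(1-z)^{2}}+\frac{1-t}{2}\log\frac{1+z}{1-z}$ covers all of $(0,2]$ in the same spirit). The geometric construction for $\al\in[0,1]$, however, does not merely leave the sweep of $a_{2}(s)$ unproved --- it fails. With the normalization $f_{s}'(0)=1$, the Carath\'eodory kernel of the decreasing domains $\SC\setminus L_{s}$ as $s\to\infty$ is the half-plane $\{\mathrm{Re}\,w>-1/4\}$, so $f_{s}$ converges locally uniformly to the half-plane map $z/(1-z)$ and $a_{2}(s)\to1$: enlarging the cross does not push $a_{2}$ below $1$, let alone to $0$. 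There is a real tension here that the construction does not confront: lowering $a_{2}$ forces the omitted set to surround the origin more, which tends to destroy the growth $M_{\infty}(r,f)\asymp(1-r)^{-2}$ you need in order to exclude $H^{1/2}$ (a claim you also did not verify for $f_{s}$). The paper resolves this with Jenkins' extremal functions for the Gronwall problem: for every $\al\in[0,2]$ there is $f\in S_\SR$ with $a_{2}=\al$ and positive Hayman index $\ga=\lim_{r\to1}(1-r)^{2}M_{\infty}(r,f)>0$, after which $f\notin H^{1/2}$ follows from the Hardy--Littlewood inequality $\int_{0}^{1}M_{\infty}^{1/2}(r,f)\,dr<\infty$ valid for $f\in H^{1/2}$.
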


\vskip.2cm 
%%%%%%%%%%%%%%%
\noindent \textbf{Integrability of the derivative of univalent functions.} Recall that the \emph{Nevanlinna class} $N$ consists of holomorphic functions $f$ in $\SD$ having bounded characteristic:
$$
\sup_{r\in(0,1)} \, \int_0^{2\pi} \log^+|f(re^{i\t})| d\t < \infty,
$$
where $\log^+x = \max\{0,\log x\}$. It is known that $N$ is wider than all Hardy spaces, that is, $\cup_{p>0} H^p \subset N$; see \cite[\S 2.1]{Du1}. 

The Bloch-Nevanlinna conjecture, asserting that $f' \in N$ whenever $f \in N$, has been proven false by various authors. A striking counterexample was provided by Lohwater, Piranian and Rudin \cite{LPR55}. They showed that for a suitably chosen increasing sequence $\{n_p\}$ of positive integers, the function 
$$
\Phi(z) = \int_0^z \exp{ \bigg( \tfrac{1}{2}\sum_{p=1}^\infty w^{n_p} \bigg) } dw 
$$
is holomorphic in $\SD$, continuous and injective in $\overline{\SD}$, and for almost all $\t$ it satisfies 
$$
0=\liminf_{r\to 1} |\Phi'(re^{i\t})| \,< \, \limsup_{r\to 1} |\Phi'(re^{i\t})| = + \infty
$$
and 
$$
-\infty =\liminf_{r\to 1} \, {\rm arg} \, \Phi'(re^{i\t})  \,< \,  \limsup_{r\to 1} \, {\rm arg}\, \Phi'(re^{i\t})  = + \infty. 
$$
This means that $\Phi\in H^\infty\cap S_\SR$ but $\Phi'\notin N$. 

It's easy to see that $a_2(\Phi) = \frac{1}{4}$ (it relies on the choice $n_1=1$ made in \cite{LPR55}). We mention that the construction of $\Phi$ can be modified so that the second coefficient takes any value in the interval $[0,\frac{\pi}{8})$, but we will not supply a proof for this. Instead, composing the Koebe function with an integral transform of the function $\Phi$, we will prove the following.

\begin{theorem} \label{thm-Lohwater-Pir-Rudin}
For every $\al \in[0,2)$ there exists $f \in S_\SR$ such that $a_2 =\al$ and $f'\notin N$. 
\end{theorem}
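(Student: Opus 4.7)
Following the hint, the strategy is to compose the Koebe function $k(w)=w/(1-w)^2$ with an integral transform of the Lohwater--Piranian--Rudin function $\Phi\in S_{\SR}\cap H^\infty$ (which has $\Phi'\notin N$). Writing $M=\|\Phi\|_\infty$, for each parameter $c\in(0,1/M)$ I would consider
\[
f_c(z)\;=\;\frac{1}{c}\,k\bigl(c\,\Phi(z)\bigr)\;=\;\frac{\Phi(z)}{\bigl(1-c\,\Phi(z)\bigr)^2},
\]
which, via the chain rule, can be rewritten as the integral transform $f_c(z)=\int_0^z\Phi'(w)(1+c\Phi(w))(1-c\Phi(w))^{-3}\,dw$. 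Since $|c\Phi(z)|\leq cM<1$ on $\SD$, the map $c\Phi$ sends $\SD$ univalently into $\SD$.

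Four routine checks then establish the main properties of $f_c$: (a) $f_c\in S$, as a composition of the univalent maps $c\Phi:\SD\to\SD$ and $k/c:\SD\to\SC$, normalized so that $f_c(0)=0$ and $f_c'(0)=1$; (b) $f_c\in S_{\SR}$, since both $\Phi$ and $k$ have real Taylor coefficients; (c) the geometric-series expansion $f_c(z)=\Phi(z)+2c\Phi(z)^2+3c^2\Phi(z)^3+\cdots$ yields $a_2(f_c)=a_2(\Phi)+2c$; and (d) the factor $(1+c\Phi)/(1-c\Phi)^3$ appearing in the displayed formula for $f_c'$ is holomorphic and bounded, and bounded away from $0$, on $\SD$, so $f_c'$ and $\Phi'$ belong to $N$ simultaneously. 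Since $\Phi'\notin N$, we conclude $f_c'\notin N$.

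A single parameter $c$ is not enough to cover all of $[0,2)$, since the set $\{a_2(f_c)\}$ equals only $(a_2(\Phi),\,a_2(\Phi)+2/M)$. I would complete the range by combining the above recipe with two further manoeuvres: (i) replacing $\Phi$ by its reflection $\widetilde\Phi(z)=-\Phi(-z)\in S_{\SR}\cap H^\infty$ (which still satisfies $\widetilde\Phi'\notin N$ and has $a_2(\widetilde\Phi)=-a_2(\Phi)$), or by its real Koebe transforms $\Phi_\zeta$ for $\zeta\in(-1,1)$ (which lie in $S_{\SR}\cap H^\infty$, have derivative outside $N$ by the automorphism invariance of $N$, and continuously vary $a_2$); and (ii) adjusting the LPR construction by choosing the lacunary sequence $\{n_p\}$ so that $\|\Phi\|_\infty$ is made as close to~$1$ as desired while preserving $\Phi'\notin N$. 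Together, (i) shifts the base value $a_2(\widehat\Phi)$ through a large interval about the origin, and (ii) makes $2/M$ approach $2$, so that the pair $(c,\widehat\Phi)$ can be chosen to give every prescribed $\alpha\in[0,2)$.

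The main obstacle is precisely the LPR-flexibility statement in (ii): one needs to isolate the features of LPR's construction that produce the pathology $\Phi'\notin N$ (essentially the lacunarity of $\{n_p\}$) from those affecting $\|\Phi\|_\infty$ (the multiplicative prefactor $\tfrac12$ and the starting index $n_1$ in $\exp\bigl(\tfrac12\sum w^{n_p}\bigr)$), so that both can be controlled independently. Once this simultaneous control is secured, the explicit formula for $f_c$ together with the reflection and Koebe-transform manoeuvres of (i) delivers the required $f=f_\alpha$ for every $\alpha\in[0,2)$, enjoying the three prescribed properties: real coefficients, $a_2=\alpha$, and $f'\notin N$.
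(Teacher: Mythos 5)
Your core construction is sound as far as it goes: $f_c=\tfrac{1}{c}k(c\Phi)$ is indeed in $S_\SR$, has $a_2=a_2(\Phi)+2c$, and has $f_c'\notin N$ because $f_c'=\Phi'\cdot(1+c\Phi)(1-c\Phi)^{-3}$ with the second factor and its reciprocal in $H^\infty$. The genuine gap is exactly the one you flag as ``the main obstacle'': without item (ii) the construction cannot reach $\al$ near $2$, and (ii) is not a routine adjustment but the crux of the whole theorem. Indeed, for any bounded $\widehat\Phi\in S$ with $\|\widehat\Phi\|_\infty=M$, Pick's inequality gives $|a_2(\widehat\Phi)|\leq 2(1-1/M)$, so your reachable set satisfies $a_2(f_c)<a_2(\widehat\Phi)+2/M\leq 2$; to get within $\ve$ of $2$ you are forced either to make $M\to1$ (while keeping $\widehat\Phi'\notin N$) or to make $\widehat\Phi$ nearly extremal for Pick's inequality (such functions degenerate to Pick functions, which are smooth and useless here). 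Neither the reflection $-\Phi(-z)$ nor the Koebe transforms $\Phi_\zeta$ help with this, since they do not control $\|\widehat\Phi\|_\infty$; so the burden falls entirely on (ii), which you leave unproved. The case $\al=0$ is also not secured, since it depends on the unverified inequality $\|\Phi\|_\infty<8$ (or on the same missing flexibility).

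The paper closes precisely this gap with a different inner map: instead of $c\Phi$ it uses the Pfaltzgraff transform $g(z)=\int_0^z\Phi'(\ze)^{\ve}\,d\ze$, which is univalent for $0<\ve\leq 1/4$, still has $g'=(\Phi')^{\ve}\notin N$, and --- this is the key point --- satisfies $\|g\|_\infty<2^{\ve}/(1-3\ve)\to 1$ as $\ve\to0$ by the distortion theorem. This manufactures exactly the ``LPR function with sup norm arbitrarily close to $1$'' that your step (ii) requires, but without touching the lacunary series at all; the two free parameters $(r,\ve)$ in $f=k_r\circ g$ then give $a_2(f)=2r+\ve/4$, which sweeps all of $(0,2)$ by the intermediate value theorem. (The case $\al=0$ is handled separately there by the square-root transform $\sqrt{\Phi(z^2)}$.) If you replace your inner map $c\Phi$ by this power transform of $\Phi'$, your argument goes through and essentially coincides with the paper's.
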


It is interesting to note that in view of this theorem, the pathological behavior of the function $\Phi$ can be encountered arbitrary close to the Koebe function $k$, whose derivative $k'(z)=\frac{1+z}{(1-z)^3}$ belongs to $H^p$ for all $p<1/3$, therefore to $N$ as well.

\vskip.1cm
The organization of the article is as follows. We prove Theorems~\ref{thm-convex-deriv-Hardy}, \ref{thm-conv-n-coeff} and \ref{thm-convex-Hardy-low-ord} in Section~\ref{sect-convex-Hardy-coeff}, Theorem~\ref{thm-angle-l-order} in Section~\ref{sect-convex-angle}, Theorems~\ref{thm-C-smooth-general} and \ref{thm-C-smooth-a2=0} in Section~\ref{sect-Lipschitz}, Theorems~\ref{thm-Hardy-star}, \ref{thm-close-to-convex-Hardy}, \ref{Hardy-convex-one-direction} and \ref{thm-f-Hardy-typ-real} in Section~\ref{sect-subclases-Hardy}, and Theorem~\ref{thm-Lohwater-Pir-Rudin} in Section~\ref{sect-Hardy-derivative}. Theorem~\ref{thm-a=0-CO-FMR} is given a new proof in an appendix in Section~\ref{sect-CO-FMR-measures}.

\section{Preliminaries} 
%%%%%%%%%%%%%%%%%%%%%%%%%%%%%%%%%%%%%%%%%%%%%%%%%%
%%%%%%%%%%%%%%%%%%%%%%%%%%%%%%%%%%%%%%%%%%%%%%%%%%
\noindent \textbf{Hardy spaces.} We will make frequent use of the fact that 
$$
z\mapsto \frac{1}{(1-z)^a} \;\, \text{is in} \;\, H^p \quad \text{ if and only } \quad  p<\frac{1}{a} 
$$
(see Exercise 1 in \cite[Chap.~1]{Du1}) and of the following two classical theorems, the first one of Hardy and Littlewood (see \cite[\S 5.6]{Du1}) and the second of Prawitz (see \cite[\S 3.7]{Du1} or \cite[\S 2.10]{Du2}). 

\begin{theoremO} \label{thm-Hardy-Littl}
If $f'\in H^p$ for some $p<1$ then $f\in H^q$ for $q=p/(1-p)$.
\end{theoremO}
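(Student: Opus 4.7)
The plan is to derive $f\in H^q$ from $f'\in H^p$ by integrating $f'$ along a radius and transferring integral-mean bounds from $f'$ to $f$. Normalizing so that $f(0)=0$, we have
\[
f(re^{i\theta})=e^{i\theta}\int_0^r f'(se^{i\theta})\,ds,\qquad r\in(0,1),\ \theta\in[0,2\pi],
\]
so the classical pointwise bound $|g(z)|\le C_p\|g\|_{H^p}(1-|z|)^{-1/p}$ for $g\in H^p$---a consequence of the sub--mean--value property applied to the subharmonic function $|g|^p$---immediately yields $|f(re^{i\theta})|=O((1-r)^{-(1-p)/p})$. Since $(1-z)^{-a}\in H^s$ exactly when $s<1/a$ (the first displayed fact in the Preliminaries), this already gives $f\in H^{q'}$ for every $q'<q=p/(1-p)$, stopping just short of the endpoint.

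To reach the endpoint itself, in the range $p\ge\tfrac12$ (so that $q\ge 1$) I would apply Minkowski's integral inequality to obtain $M_q(r,f)\le\int_0^r M_q(s,f')\,ds$. The standard interpolation estimate $M_q(s,g)\le C(1-s)^{-(1/p-1/q)}\|g\|_{H^p}$ specialises at the endpoint to the borderline bound $M_q(s,f')=O((1-s)^{-1})$, which is logarithmically short of being integrable in $s$. To recover the missing logarithm I would pass to the canonical factorization $f'=B\cdot h$, with $B$ the Blaschke product of zeros of $f'$ and $h\in H^p$ zero-free satisfying $|f'|\le|h|$ and $\|h\|_{H^p}=\|f'\|_{H^p}$. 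Since $h$ has no zeros, the power $h^{p/q}$ is a holomorphic single-valued function with $|h^{p/q}|^q=|h|^p$, hence lies in $H^q$; inserting the bound $|f'|\le|h|=|h^{p/q}|^{q/p}$ into the radial integral and applying H\"older against a weight of the form $(1-s)^{-(1-p/q)}$ on $(0,r)$ converts the estimate into an $L^q$ bound driven by $\|h^{p/q}\|_{H^q}$.

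The genuinely hard range is $p<\tfrac12$, so $q<1$, where Minkowski's integral inequality is unavailable. Here I would replace the radial step by a Littlewood--Paley / area-function argument, based on an identity of the type
\[
\int_0^{2\pi}|f(re^{i\theta})|^q\,d\theta\ \asymp\ \int_{|z|<r}|f(z)|^{q-2}|f'(z)|^2\log(r/|z|)\,dA(z)
\]
(valid essentially via Green's formula and adapted to $q<2$ through the subharmonicity of $|f|^q$), and then reduce the claim to a Carleson-type inequality of the form
\[
\int_{\SD}|f'(z)|^{q}(1-|z|)^{q-1}\,dA(z)\le C\,\|f'\|_{H^p}^{q},
\]
whose weights and exponents balance exactly at the endpoint $q=p/(1-p)$ and which can be extracted from the harmonic majorant of $|f'|^p$. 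This endpoint argument, balancing weights and exponents at the critical value, is the main obstacle: the pointwise step of the first paragraph gives everything below the endpoint essentially for free, but the endpoint itself requires the more delicate harmonic-analytic machinery above.
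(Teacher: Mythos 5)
You should first be aware that the paper does not prove this statement at all: it is quoted as a classical theorem of Hardy and Littlewood, with a reference to Duren's book, so your proposal has to stand entirely on its own. Judged on its own terms, it has a genuine gap already in the step you describe as free. From $f'\in H^p$ you correctly derive the pointwise bound $|f(re^{i\theta})|\le C(1-r)^{-(1-p)/p}=C(1-r)^{-1/q}$, but a bound on the maximum modulus does not place a general holomorphic function in any Hardy space: the fact that the particular function $(1-z)^{-a}$ lies in $H^s$ exactly for $s<1/a$ says nothing about other functions with the same growth. (A lacunary series $\sum_n 2^{na}z^{2^n}$ has $M_\infty(r)\asymp M_s(r)\asymp (1-r)^{-a}$ for every $s>0$ and hence lies in no Hardy space; such a function cannot arise here, but that shows the inference itself is invalid, and you supply no further input to validate it.) The sub-endpoint conclusion $f\in H^{q'}$ for $q'<q$ is true, but it must be extracted from the integral means of $f'$ rather than from $M_\infty(r,f)$: for $q'\ge 1$ one combines Minkowski's inequality with $M_{q'}(s,f')\le C(1-s)^{1/q'-1/p}\|f'\|_{H^p}$, and for $q'<1$ one needs a lemma of the form $M_{q'}^{q'}(r,f)\le |f(0)|^{q'}+C\int_0^r (r-s)^{q'-1}M_{q'}^{q'}(s,f')\,ds$.

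The endpoint $q=p/(1-p)$, which the paper genuinely uses (e.g.\ to pass from $f'\in H^{1/3}$ to $f\in H^{1/2}$ in Theorem~\ref{thm-close-to-convex-Hardy}(i)), remains a program in your write-up, and each branch of the program, implemented as you describe it, lands back on the very logarithmic divergence it is meant to remove. For $q\ge1$, after removing the Blaschke factor and applying H\"older against the weight $(1-s)^{-(1-p/q)}=(1-s)^{-p}$, the natural computation (using $|h|^{q/p}=|h^{p/q}|\cdot|h|^p$ and the pointwise bound $|h|^p\le C(1-s)^{-1}\|h\|_{H^p}^p$) still produces $\int_0^1(1-s)^{-1}\,ds$. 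For $q<1$, the Carleson-type inequality you invoke is precisely the Hardy--Littlewood embedding $H^p\subset A^q_{q/p-2}$ (note $q/p-2=q-1$ exactly when $q=p/(1-p)$), an endpoint theorem of the same depth as the one being proved; it does not follow from the harmonic majorant of $|f'|^p$ by the naive estimate, which again yields the divergent integral $\int_{\SD}P[\mu](z)(1-|z|)^{-1}\,dA(z)$. Moreover, the Littlewood--Paley identity carries the factor $|f|^{q-2}$ with $q-2<0$, which cannot be bounded above near the zeros of $f$, so the reduction to an inequality involving $f'$ alone is not justified. In short: the below-endpoint part rests on an invalid inference, and the endpoint part identifies the right obstacle but never exhibits the mechanism that beats the logarithm; for a correct argument see Duren, \emph{Theory of $H^p$ Spaces}, Theorems 5.11--5.12.
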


\begin{theoremO} \label{thm-Prawitz} 
If $f\in S$ and $p\in(0,\infty)$ then 
$$
M_p^p(r,f) \leq p \int_0^r \frac{1}{t} M_{\infty}^p(t,f)  dt, \qquad r\in(0,1). 
$$
\end{theoremO}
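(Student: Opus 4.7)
The plan is to obtain Prawitz's inequality by integrating a differential inequality for $M_p^p(r,f)$, combining the Hardy--Stein area formula (which comes from Green's theorem and the subharmonicity of $|f|^p$) with the univalence of $f$ to pass to an integral in the image plane.

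First I would establish the Hardy--Stein identity: since $\Delta(|f|^p)=p^2|f|^{p-2}|f'|^2$ on $\{f\neq 0\}$, the Riesz representation of subharmonic functions gives, for $p>0$ and $r\in(0,1)$,
$$
r\,\frac{d}{dr} M_p^p(r,f) \;=\; \frac{p^2}{2\pi} \iint_{|z|<r} |f(z)|^{p-2}\,|f'(z)|^2 \, dA(z).
$$
Univalence then permits the change of variables $w=f(z)$, whose real Jacobian is $|f'(z)|^2$ and which is injective, so that the right-hand side becomes $\frac{p^2}{2\pi}\iint_{f(\{|z|<r\})} |w|^{p-2}\, dA(w)$. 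By the maximum principle applied to $|f|$, one has $f(\{|z|<r\})\subset \{|w|<M_\infty(r,f)\}$, and since $|w|^{p-2}\geq 0$ enlarging the region of integration yields
$$
\iint_{f(\{|z|<r\})} |w|^{p-2}\, dA(w) \;\leq\; \iint_{|w|<M_\infty(r,f)} |w|^{p-2}\, dA(w) \;=\; \frac{2\pi}{p}\, M_\infty^p(r,f).
$$

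Putting the two steps together gives the pointwise differential inequality $\frac{d}{dr} M_p^p(r,f)\leq \frac{p}{r}M_\infty^p(r,f)$. Integrating from $0$ to $r$, and using $M_p^p(0,f)=0$ together with the integrability of $\rho^{-1}M_\infty^p(\rho,f)\sim \rho^{p-1}$ near $0$ (a consequence of the normalization $f(z)=z+O(z^2)$), one arrives at the stated estimate. The main delicate point is the validity of the Hardy--Stein identity when $p<2$, since the integrand $|f|^{p-2}|f'|^2$ is singular at the zero of $f$ at the origin; this is resolved by the factorization $f(z)=zh(z)$ with $h$ holomorphic and $h(0)=1$, which reduces the local integrability question to that of $|z|^{p-2}$, plainly integrable for any $p>0$. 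A parallel care is needed to justify differentiation in $r$ through a limiting argument that excludes a small disk around the origin and then lets its radius tend to $0$.
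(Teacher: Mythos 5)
The paper does not prove this statement; it is quoted as a classical theorem of Prawitz with a reference to Duren, and your argument (Hardy--Stein identity, change of variables $w=f(z)$ using univalence, enlarging the image region to the disk $|w|<M_\infty(r,f)$, then integrating the resulting differential inequality) is precisely the standard proof given in that reference. The argument is correct, including the care taken with the singularity of $|f|^{p-2}|f'|^2$ at the origin for $p<2$ and with the integrability of $t^{-1}M_\infty^p(t,f)$ near $t=0$.
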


A direct corollary of Theorem~\ref{thm-Prawitz}, via the growth theorem, is that $S\subset H^p$ for all $p<1/2$.

\vskip.2cm 
%%%%%%%%%%%%%%%
\noindent \textbf{The Carath\'eodory class.} Let $\cP$ denote the Carath\'eodory class of functions $h(z) = 1+  \sum_{n=1}^{\infty} c_n z^n$ which are analytic in $\SD$ and satisfy ${\rm Re}\, h(z) > 0$ for all $z \in \SD$. It is well known that $\cP\subset H^p$ for all $p<1$; see Exercise 2 in \cite[Chap.~1]{Du1}. According to the Herglotz representation, for each function $h$ in $\cP$ there exists a unique probability measure $\mu$ supported on $\ST=\partial \SD$, such that 
$$
h(z) = \int_\ST \frac{1+\la z}{1-\la z} d\mu (\la), \quad z \in \SD.
$$
A theorem of Carath\'eodory states that $|c_n|\leq 2$, for all $n\geq1$. We will be needing the case of equality only for $n=1$: we have that  $|c_1|= 2$ only for $h(z)= \frac{1+\la z}{1-\la z}$, for $\la\in\ST$. See \cite[\S 1.9 and \S 2.5]{Du2}.

\section{Convex functions, Hardy spaces and coefficients} \label{sect-convex-Hardy-coeff}
%%%%%%%%%%%%%%%%%%%%%%%%%%%%%%%%%%%%%%%%%%%%%%%%%%
%%%%%%%%%%%%%%%%%%%%%%%%%%%%%%%%%%%%%%%%%%%%%%%%%%
To prove the Hardy space estimate in $C$ we will combine Prawitz' theorem with the following theorem of Gronwall, which gives growth and distortion bounds in $C$ in terms of $|a_2|$. Although Gronwall \cite{Gr20} provided no proof, it was subsequently proved by Finkelstein \cite{Fi67} (via the Schwarz-Pick Lemma) and by Szynal \emph{et.~al.} \cite{SSW80} (via L\"owner theory).  

\begin{theoremO}[\cite{Gr20, Fi67, SSW80}] \label{thm-growth-dist-C}
If $f\in C$ and $\al = |a_2|$ then 
$$
\ell_{\al}(r) \leq |f(z)| \leq s_{\al}(r), \qquad r=|z|, 
$$
and
$$
\ell'_{\al}(r) \leq |f'(z)| \leq s'_{\al}(r), \qquad r=|z|,
$$
where $s_\al$ is given in \eqref{formula-sector} and
$$ 
\ell_{\al}(z) = \frac{i}{ 2 \sqrt{1-\al^2} }\log\frac{ 1-\la z }{ 1-\overline{\la} z }, \qquad \la=\al+ i \sqrt{1-\al^2}, 
$$
maps onto a shifted infinite vertical strip. 

\end{theoremO}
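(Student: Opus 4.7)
My plan is to establish the sharp distortion estimates via a Schwarz--Pick argument on the Carath\'eodory function associated with $f$, to deduce the upper growth bound by a direct integration, and to handle the lower growth bound by a L\"owner-chain comparison, which will be the main obstacle. Since the bounds are rotationally invariant, the rotation $f(z)\mapsto e^{-i\vt}f(e^{i\vt}z)$ sends $a_2$ to $e^{i\vt}a_2$ without changing $|f|$ or $|f'|$ on circles, so I reduce to $a_2=\al\in[0,1]$ real nonnegative, and it suffices to prove the inequalities at $z=r\in(0,1)$.

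Since $f\in C$, the function $p(z)=1+zf''(z)/f'(z)$ lies in $\cP$ with $p(0)=1$ and $p'(0)=2\al$. Writing $p=(1+\om)/(1-\om)$ produces a Schwarz function $\om:\SD\to\SD$ with $\om(0)=0$ and $\om'(0)=\al$, so $\phi(z)=\om(z)/z$ is holomorphic in $\SD$ with $|\phi|\leq 1$ and $\phi(0)=\al$. Applying the Schwarz--Pick lemma to $\phi$ confines $\phi(r)$ to the closed Euclidean disk centered at $\al(1-r^2)/(1-\al^2 r^2)$ with radius $r(1-\al^2)/(1-\al^2 r^2)$. A short calculation on ${\rm Re}\,p=(1-|\om|^2)/|1-\om|^2$ shows that, as $\om(r)=r\phi(r)$ runs over the corresponding admissible disk, the extremes of ${\rm Re}\,p(r)$ are attained at its two real boundary points, yielding
$$
p_\ell(r):=\frac{1-r^2}{1-2\al r+r^2}\,\leq\, {\rm Re}\,p(r)\,\leq\, \frac{1+2\al r+r^2}{1-r^2}=:p_s(r).
$$
A direct verification identifies $p_\ell$ and $p_s$ as the values of $1+r\ell_\al''/\ell_\al'$ and $1+rs_\al''/s_\al'$ at $z=r$, so the two extremes are realized by $\ell_\al$ and $s_\al$. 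Integrating the identity $\log|f'(r)|=\int_0^r({\rm Re}\,p(t)-1)/t\,dt$ against these pointwise bounds then gives $\ell_\al'(r)\leq|f'(r)|\leq s_\al'(r)$, and the upper growth bound follows from the triangle inequality for integrals, $|f(r)|\leq\int_0^r|f'(t)|\,dt\leq s_\al(r)$.

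The main obstacle is the lower growth bound $|f(r)|\geq\ell_\al(r)$, since the triangle inequality cannot be reversed to pass from a pointwise lower bound on $|f'|$ to one on $|f|$; indeed the argument of $f'$ along $[0,r]$ is not controlled by the Schwarz--Pick estimate alone. My plan here is to follow the L\"owner-theoretic approach of \cite{SSW80}: every $f\in C$ arises as the limit $\lim_{t\to\infty}e^t w(\cdot,t)$ of the solution of a radial L\"owner equation $\partial_t w=-z\,k(z,t)\,\partial_z w$ with unimodular driving $k$, and the side constraint $|a_2|=\al$ translates into a weighted mean condition on $k$. Writing down the ODE satisfied by the real-valued quantity $|w(r,t)|$ and comparing it with the ODE arising from the ``constant-direction'' driving that produces $\ell_\al$ yields $|f(r)|\geq\ell_\al(r)$, with equality only for rotations of $\ell_\al$. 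This comparison, together with the task of translating the $|a_2|=\al$ constraint into the correct restriction on admissible drivings, is the main technical step.
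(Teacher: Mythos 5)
The paper itself offers no proof of this statement: it is quoted from the literature, with the remark that Finkelstein's proof runs through the Schwarz--Pick lemma and Szynal--Szynal--Wajler's through L\"owner theory. Your Schwarz--Pick setup is the right starting point (and is essentially Finkelstein's), but as written the argument has two genuine gaps.

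First, the opening reduction is not legitimate: you cannot simultaneously normalize $a_2=\al\in[0,1]$ and restrict to $z=r$. Rotating a point $z_0=re^{i\vt}$ to the positive axis multiplies $a_2$ by $e^{i\vt}$, so after fixing $a_2\geq 0$ you must still treat every $z=re^{i\t}$; for $\t\neq 0$ the admissible region for $\om(z)=z\phi(z)$ is $e^{i\t}$ times the disk you use, no longer centred on the real axis, and your extremization does not apply. This is not cosmetic. All that Schwarz--Pick gives uniformly on $|z|=r$ is $|\phi(z)|\leq(\al+r)/(1+\al r)$, hence $|\om(z)|\leq\rho_2:=r(\al+r)/(1+\al r)$, and extremizing ${\rm Re}\,\frac{1+\om}{1-\om}=\frac{1-|\om|^2}{|1-\om|^2}$ over $|\om|\leq\rho_2$ yields
$$
\frac{1-r^2}{1+2\al r+r^2}\;\leq\;{\rm Re}\,p(z)\;\leq\;\frac{1+2\al r+r^2}{1-r^2},\qquad |z|=r.
$$
Your upper bound survives because the maximum happens to occur at the real point you consider, but the correct uniform lower distortion bound is $|f'(z)|\geq(1+2\al r+r^2)^{-1}$, not the quantity $(1-2\al r+r^2)^{-1}$ you derive: the latter exceeds $1$ for small $r>0$ and is violated, e.g., by $s_\al$ at $z=-r$ (for $\al=1$, by the half-plane map, whose derivative has modulus $(1+r)^{-2}$ there). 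Your computation does reveal that for the displayed inequalities to be the classical Gronwall--Finkelstein bounds, the parameter in the definition of $\ell_\al$ should read $\la=-\al+i\sqrt{1-\al^2}$, so that $\ell_\al'(z)=(1+2\al z+z^2)^{-1}$.

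Second, the lower growth bound neither needs L\"owner theory nor is actually proved in your proposal, which only sketches a plan for the comparison of driving functions. The standard device from the Koebe growth theorem applies verbatim: let $\Gamma=f^{-1}\big([0,f(z)]\big)$; since $\Gamma$ joins $0$ to $z$ it meets every circle $|\ze|=t$ for $t\leq r$, whence
$$
|f(z)|=\int_\Gamma |f'(\ze)|\,|d\ze|\;\geq\;\int_0^r \min_{|\ze|=t}|f'(\ze)|\,dt\;\geq\;\int_0^r\frac{dt}{1+2\al t+t^2},
$$
which is exactly the (corrected) lower growth bound. So the ``main obstacle'' you identify dissolves once the distortion estimate is established uniformly on circles; no translation of the constraint $|a_2|=\al$ into admissible L\"owner drivings is required.
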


We now prove Theorem~\ref{thm-convex-deriv-Hardy}, which states that if $f\in C$ then $f'\in H^p$ for all $p<\frac{1}{1+|a_2|}$ and $f\in H^q$ for all $q<\frac{1}{|a_2|}$. 

\begin{proof}[Proof of Theorem~\ref{thm-convex-deriv-Hardy}]
In view of Theorem~\ref{thm-growth-dist-C}, we have that 
$$
|f'(z)| \leq s'_{\al}(r) = \frac{1}{(1-r)^{1+\al} (1+r)^{1-\al} },   \qquad r=|z|, 
$$
where $\al=|a_2|$. By Alexander's theorem (see \cite[\S 2.5]{Du2}), the function $g(z)=zf'(z)$ is strarlike. Applying Prawitz' Theorem~\ref{thm-Prawitz} to $g$ we obtain 
$$
M_p^p(r,g) \leq p \int_0^r \frac{dt}{t^{1-p}(1-t)^{p(1+\al)}}. 
$$
Therefore, we have that 
$$
\lim_{r\to1^-} M_p^p(r,g) \leq p \int_0^1 \frac{dt}{t^{1-p}(1-t)^{p(1+\al)}}, 
$$
which is finite since $1-p<1$ and $p(1+\al)<1$, so that $g\in H^p$. Hence, $f'(z)=g(z)/z$ is also in $H^p$. The estimate for $f$ follows from Theorem~\ref{thm-Hardy-Littl}.

Finally, both estimates are sharp for each $\al\in[0,1]$ in view of the sector function \eqref{formula-sector}. 

\end{proof}

An alternative proof of the estimate for the function $f$ could be given by using the Schwarz-Pick Lemma in order to show that 
$$
(1-|z|^2)\left| \frac{f''(z)}{f'(z)} \right| \leq  2(1+|a_2|), \qquad z\in\SD, 
$$
and then invoking a result of Kim and Sugawa \cite{KS09} to deduce that $f \in H^q$ for $q<\frac{1}{|a_2|}$. We leave the details of this to the interested reader. 

\vskip.2cm
We turn to the coefficients of functions in $C$ and prove Theorem~\ref{thm-conv-n-coeff}.

\begin{proof}[Proof of Theorem~\ref{thm-conv-n-coeff}]
We first prove the asymptotic estimate $a_n\in O(n^{|a_2|-1})$. As in the proof of Theorem~\ref{thm-convex-deriv-Hardy}, we see that the starlike function $g(z)=zf'(z)$ satisfies 
$$
|g(z)| \leq \frac{1}{(1-|z|)^{1+|a_2|}},   \qquad z\in\SD, 
$$
for some $c>0$, in view of Theorem~\ref{thm-growth-dist-C}. Writing $g(z)=\sum_{n=1}^\infty b_nz^n$ we see from Pommerenke's \cite[Theorem 3]{Po62} that the coefficients $b_n = n a_n$ satisfy $b_n \in O(n^{|a_2|})$. From this the estimate follows. 

For the sharpness we see from the sector function~\eqref{formula-sector}, that the estimate cannot be replaced by $o(n^{|a_2|-1})$. Indeed, for $\al=0$ we have that  
$$
s_0(z) = \frac{1}{2} \log \frac{1+z}{1-z} = \sum_{k=0}^\infty \frac{z^{2k+1}}{2k+1}, 
$$
so that $ n a_n \nrightarrow 0$, while for $\al\in(0,1]$ it has been computed in \cite{CHLM23} that the coefficients of $s_\al$ satisfy 
$$
\lim_{n\to\infty} n^{1-\al} a_n = \frac{2^{\al-1}}{\Gamma(\al+1)}>0, 
$$
where $\Gamma$ denotes the gamma function.

We now prove the bound $|a_n| \leq \exp\left( \frac{|a_2|^2-1}{2} \right)$. Recall that according to the Marx-Strohh\"acker theorem if $f\in C$ then $f$ is starlike of order $1/2$, \emph{i.e.}, it satisfies
$$
{\rm Re}\, \frac{zf'(z)}{f(z)} > \frac{1}{2}, \qquad z\in\SD 
$$
(see exercise 23 in \cite[Ch.~2]{Du2}). We set 
$$
h(z)=2 \, \frac{zf'(z)}{f(z)}-1 = 1+ \sum_{n=1}^\infty c_n z^n, \qquad z\in\SD, 
$$ 
which belongs to the class $\cP$. We have that $c_1=2a_2$. We write 
$$
\left( \log\frac{f(z)}{z} \right)' = \frac{h(z)-1}{2z},
$$
which upon integration gives
$$
\sum_{n=0}^\infty a_{n+1} z^n = \frac{f(z)}{z} = \exp \left( \sum_{n=1}^\infty \frac{c_n}{2n} z^n \right). 
$$
An application of the third Lebedev-Milin inequality \cite[\S 5.1]{Du2} yields
\be \label{leb-mil-coef-bound}
|a_{n+1}|^2 \leq \exp \left( \sum_{n=1}^n \frac{|c_k|^2 -4 }{4k} \right), \qquad n \geq 1. 
\ee
Observe that, by Carath\'eodory's theorem, each summand is non-positive and, therefore, we have that 
$$
|a_{n+1}|^2 \leq \exp \left( \frac{|c_1|^2 -4 }{4} \right) = \exp \left( |a_2|^2 -1  \right). 
$$
The proof is complete. 
\end{proof}

\vskip.2cm
Note that keeping more terms in \eqref{leb-mil-coef-bound} would result in a stronger, though less elegant inequality. 

\vskip.2cm
We make a brief remark regarding summability. In view of Theorem~\ref{thm-conv-n-coeff}, the coefficients $a_n$ of functions in $C$ converge to zero, except for the case of the half-plane function and its rotations. It follows from a result of Lewis \cite{Lew83} that, in general, the sequence of coefficients $(a_n)_{n=1}^\infty$ will not belong to any $\ell^p$ space. Indeed, he showed that the polylogarithm function    
$$
z\mapsto \sum_{n=1}^\infty \frac{z^n}{n^t}  
$$  
belongs to $C$ for all $t\geq 0$; observe that  $(n^{-1/p})_{n=1}^\infty \notin \ell^p$ for $p>0$. However, by a theorem of Pommerenke \cite{Po62} we have that $(a_n)_{n=1}^\infty \in \ell^1$ for bounded convex functions. This is best possible since Lewis' example is bounded for $t>1$.

\vskip.2cm
We will be needing the following lemma. 

\begin{lemma} \label{lem-coeff-3-conv} 
If $f\in C$ then $|a_3| \leq \frac{1+2|a_2|^2}{3}$. Equality holds only for rotations of the sector function \eqref{formula-sector}. 

\end{lemma}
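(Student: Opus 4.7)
The plan is to reduce to a classical coefficient inequality in the Carath\'eodory class. Set
$$
p(z) \;=\; 1 + \frac{zf''(z)}{f'(z)} \;=\; 1 + c_1 z + c_2 z^2 + \ldots
$$
Since $f\in C$, we have $p\in\cP$. Comparing the Taylor series of $p(z)f'(z)$ with $f'(z)+zf''(z)=\sum_{n\ge 1} n^2 a_n z^{n-1}$, I read off
$$
c_1 = 2a_2, \qquad 6a_3 = c_2 + 2a_2 c_1 = c_2 + c_1^2,
$$
so that $a_3 = (c_2 + c_1^2)/6$. The problem is therefore to bound $|c_2+c_1^2|$ for $p\in\cP$.

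The key input is the standard second-coefficient inequality $|2c_2 - c_1^2|\le 4 - |c_1|^2$, which I would derive in one line from the Herglotz representation or, more concretely, by writing $w(z)=(p(z)-1)/(p(z)+1)$, noting that $w(z)/z$ is a self-map of $\SD$, and applying the Schwarz--Pick lemma at the origin. With this in hand, the decomposition
$$
c_2 + c_1^2 \;=\; \tfrac{1}{2}(2c_2 - c_1^2) + \tfrac{3}{2}c_1^2
$$
and the triangle inequality yield
$$
6|a_3| \;\le\; \tfrac{1}{2}(4 - |c_1|^2) + \tfrac{3}{2}|c_1|^2 \;=\; 2 + |c_1|^2 \;=\; 2 + 4|a_2|^2,
$$
which is the desired bound.

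For the equality case I would argue as follows. Equality in the triangle inequality above forces $2c_2 - c_1^2$ and $c_1^2$ to be non-negative real multiples of one another, and equality in the second-coefficient inequality forces $w(z)/z$ to be a disk automorphism. Combining both and writing $a_2=\alpha e^{i\gamma}$, a short calculation determines $p$ completely as
$$
p(z) \;=\; \frac{1 + 2\alpha e^{i\gamma}z + e^{2i\gamma}z^2}{1 - e^{2i\gamma}z^2}.
$$
On the other hand, a direct computation using $s_\alpha'(z) = (1+z)^{\alpha-1}(1-z)^{-\alpha-1}$ gives $1 + z s_\alpha''(z)/s_\alpha'(z) = (1+2\alpha z + z^2)/(1-z^2)$, so the displayed $p$ is precisely the pre-Schwarzian of the rotation $z\mapsto e^{-i\gamma}s_\alpha(e^{i\gamma}z)$. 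Since $1 + zf''/f'$ determines $f$ up to the normalization $f(0)=0,\, f'(0)=1$ built into the class $S$, the equality case is exhausted by such rotations.

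The main obstacle is the bookkeeping in the equality analysis: one must show that the two separate equality conditions (in Carath\'eodory's inequality and in the triangle inequality) are simultaneously compatible only with the one-parameter family of $p$'s coming from rotations of $s_\alpha$, and then identify that family explicitly. The computation simplifies nicely once one substitutes $\tilde z = e^{i\gamma}z$, but it must be carried out carefully to cover also the boundary cases $\alpha=0$ (rotations of $s_0$) and $\alpha=1$ (the half-plane mapping $s_1$), neither of which is a priori excluded by the statement.
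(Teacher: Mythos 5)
Your proof is correct and is in substance the same as the paper's. The inequality itself is identical: your identity $6a_3=c_2+c_1^2$ and the decomposition $c_2+c_1^2=\tfrac12(2c_2-c_1^2)+\tfrac32 c_1^2$ are exactly the paper's chain $|a_3|\le|a_3-a_2^2|+|a_2|^2$ combined with Hummel's inequality $|a_3-a_2^2|\le\frac{1-|a_2|^2}{3}$, rewritten in Carath\'eodory coefficients via $a_3-a_2^2=(2c_2-c_1^2)/12$; you simply derive the key estimate $|2c_2-c_1^2|\le 4-|c_1|^2$ from the Schwarz--Pick lemma instead of citing Hummel/Trimble. The only genuine divergence is the equality analysis: the paper observes that equality in the Carath\'eodory--Toeplitz determinant forces the Herglotz measure of $p$ to have at most two atoms and then shows the two atoms must be antipodal ($\lambda\overline{\mu}=-1$), whereas you use the equality case of Schwarz--Pick to make $w/z$ a disk automorphism, so that $w$ is a degree-two Blaschke product --- the same object in different clothing. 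Your sketched computation does close: writing $w(z)/z=\eta\,\frac{z+b}{1+\overline{b}z}$, the phase condition coming from equality in the triangle inequality gives $\eta=e^{-2i\arg b}$ and $c_1=2\overline{b}$, which yields precisely the displayed $p$, and the boundary cases $b=0$ (rotations of $s_0$, via the Schwarz lemma) and $|c_1|=2$ (rotations of $s_1$, via Carath\'eodory's equality case) fall out as you anticipated. Nothing is missing conceptually, though, as you note yourself, the equality bookkeeping still has to be written out in full.
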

\begin{proof}
By a result of Hummel~\cite{Hu57} (see also Trimble~\cite{Tr75} for a simpler proof), we have that
\be \label{ineq-Humm-Trimble}
|a_3 - a_2^2| \leq \frac{1 - |a_2|^2}{3}. 
\ee
A simple application of the triangle inequality
\be \label{ineq-triangle-Trimble}
|a_3| \leq |a_3 - a_2^2| + |a_2|^2 \leq \frac{1 +2 |a_2|^2}{3} 
\ee
gives the desired inequality. It is easy to see that equality holds for rotations of the sector function \eqref{formula-sector}, but some more work is needed in order to show that there are no other extremal functions. 

Let $f$ be an extremal function for the statement. We associate with $f$ a function $h(z)=1+\sum_{n=1}^\infty c_n z^n$ in $\cP$ by 
$$
h(z) = 1+\frac{z f''(z)}{f'(z)}, \qquad z\in\SD, 
$$
(see \cite[\S 2.5]{Du2}) and write the relations for the initial coefficients
$$
c_1 = 2a_2 \qquad \text{and} \qquad c_2 = 6a_3 - 4a_2^2. 
$$
We see that \eqref{ineq-Humm-Trimble} is equivalent to 
$$
\left|c_2 - \frac{c_1^2}{2} \right| \leq 2 -  \frac{|c_1|^2}{2}, 
$$
which is equivalent to the second Carath\'eodory-Toeplitz determinant of $h$ being non-negative. Equality holds here only if the support of the Herglotz measure of $h$ consists of at most 2 atoms (see Theorem IV.22 in \cite[Ch.~IV, \S 7]{Tsu}). Hence, we have that
$$
h(z) = t \frac{1+\la z}{1- \la z} + (1-t) \frac{1+\mu z}{1- \mu z}, \qquad t\in[0,1], \;\la, \mu \in \ST
$$
(these correspond to extremals that have not been considered by Trimble). Without loss of generality we may assume that $t \in[\frac{1}{2},1]$. The coefficients of $h$ are $c_n = 2t\la^n + 2(1-t)\mu^n$, while a simple calculation gives us that 
$$
6( a_3 - a_2^2 )= c_2 - \frac{c_1^2}{2}  =   2 t(1-t)  (\la  -\mu )^2
$$
and 
$$
a_2^2 = \frac{c_1^2}{4} =  \big[ \mu+ t(\la  -\mu )\big]^2. 
$$
The case when $\la=\mu$ corresponds to rotations of the half-plane function $s_1(z)= \frac{z}{1 - z}$. Otherwise, if $\la\neq\mu$ we have that equality in the the triangle inequality of \eqref{ineq-triangle-Trimble} shows that these two expressions must have the same argument and therefore 
$$
0< \frac{\big[ \mu+ t(\la  -\mu )\big]^2}{(\la  -\mu )^2} = \left( t + \frac{1}{\la \overline{\mu} -1 } \right)^2, 
$$
from which it follows that $\la \overline{\mu} \in \SR$, so that $\la \overline{\mu}=-1$. Hence, we have that 
$$
\big(\log f'(z) \big)' = \frac{h(z)-1}{z} =  \frac{2 t \la}{1-\la z} -  \frac{2(1-t)\la }{1+\la z}, 
$$
which yields 
$$
f'(z) = \frac{1}{ (1-\la z)^{(1+\al)} (1+\la z)^{(1-\al)} }, 
$$
readily showing that $f(z) = \overline{\la} s_\al(\la z)$, with $\al=2t-1$. 
\end{proof}

\vskip.2cm
We turn to the lower order $\beta$ of functions in $C$ and first prove the following. 

\begin{lemma} \label{lem-low-ord-conv}
If $f\in C$ then $\beta=|a_2|$ if and only if either $a_2=0$ or $f$ is a rotation of $s_\al$.
\end{lemma}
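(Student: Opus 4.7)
The plan is to leverage the identity $A_f(0) = a_2$, which makes the bound $\beta \leq |a_2|$ automatic and forces any equality $\beta = |a_2|$ to be realized at the origin $\ze = 0$. This in turn converts the equality into a vanishing-gradient condition on $|A_f|^2$ at $0$, and a short Wirtinger calculation will reduce matters to the equality case already classified in Lemma~\ref{lem-coeff-3-conv}.

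For the easy implication, the case $a_2 = 0$ is immediate from $0 \leq \beta \leq |A_f(0)| = 0$. For $f(z) = \overline{\la} s_\al(\la z)$ with $\la \in \ST$, I would compute $A_{s_\al}$ by hand: starting from $s_\al'(z) = (1+z)^{\al-1}(1-z)^{-\al-1}$, logarithmic differentiation gives $s_\al''(z)/s_\al'(z) = 2(\al + z)/(1-z^2)$, and a routine simplification with $\ze = r e^{i\t}$ yields
\[
|A_{s_\al}(\ze)|^2 \; = \; \al^2 \,+\, \frac{4(1-\al^2)\, r^2 \sin^2\t}{(1-r^2)^2 + 4 r^2 \sin^2\t}.
\]
Since $0 \leq \al \leq 1$, this is bounded below by $\al^2$ with equality at $\ze = 0$, so $\beta = \al = |a_2|$; the identity $A_f(\ze) = \la\, A_{s_\al}(\la \ze)$, a direct consequence of the chain rule applied to the Koebe transform formula \eqref{form-Koebe-transf}, transfers the same conclusion to any rotation.

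For the non-trivial implication, suppose $a_2 \neq 0$ and $\beta = |a_2|$. Since $|A_f(0)|^2 = |a_2|^2$ equals the infimum of the smooth real-valued function $|A_f|^2$ on $\SD$, its Wirtinger gradient must vanish at the origin. A Taylor expansion of $f''/f'$ around $0$ gives
\[
\partial_\ze A_f(0) \; = \; 3 a_3 - 2 a_2^2, \qquad \partial_{\overline{\ze}} A_f(0) \; = \; -1,
\]
from which
\[
0 \; = \; \partial_\ze |A_f|^2(0) \; = \; (3 a_3 - 2 a_2^2)\,\overline{a_2} \,-\, a_2.
\]
Solving for $a_3$ and taking moduli yields $|a_3| = (1 + 2|a_2|^2)/3$, which is precisely the equality case in Lemma~\ref{lem-coeff-3-conv}; that lemma then forces $f$ to be a rotation of $s_\al$ with $\al = |a_2|$.

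The main obstacle is not any single step but rather making sure the Wirtinger computation at $\ze = 0$ is set up correctly and cleanly matched with the extremal case of Lemma~\ref{lem-coeff-3-conv}; no further extremal analysis is required, since the equality case in that lemma has already been classified.
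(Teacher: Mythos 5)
Your proposal is correct and follows essentially the same route as the paper: the explicit formula for $|A_{s_\al}|^2$ matches the paper's (with $y=r\sin\t$ and $|1-z^2|^2=(1-r^2)^2+4r^2\sin^2\t$), and your vanishing Wirtinger gradient condition $(3a_3-2a_2^2)\overline{a_2}-a_2=0$ is exactly the condition the paper extracts from the first-order term of the expansion of $|A_f(re^{i\t})|^2$ as $r\to0$, after which both arguments conclude via the equality case of Lemma~\ref{lem-coeff-3-conv}.
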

\begin{proof}
It is evident that if $a_2=0$ then $\beta=0$ as well. We see that the sector function $s_\al$ satisfies $\frac{s_\al''(z)}{s_\al'(z)} = \frac{2(\al+z)}{1-z^2}$, so that after a straightforward calculation we get that 
$$
|A_{s_\al}(z)|^2 = \al^2 +  \frac{4(1-\al^2)y^2}{|1-z^2|^2}, \qquad z=x+yi. 
$$
Hence $|A_{s_\al}(z)|\geq \al$ and therefore $\beta=\al$ for $s_\al$. Clearly the same is true for rotations of $s_\al$. 

On the other hand, for an arbitrary $f\in C$ and for $z=re^{i\t}$ we find that
$$
|A_f(z)|^2 = |a_2|^2 +  2 r {\rm Re}\big[ ( 3a_3 \overline{a_2} -2a_2^2\overline{a_2} - a_2)  e^{i\t} \big]+ O(r^2), \qquad r\to 0. 
$$
If $\beta=|a_2|$ then $|A_f(z)| \geq |a_2|$ for every $\t$, so that $3a_3 \overline{a_2} -2a_2^2\overline{a_2} - a_2=0$. It follows that either $a_2=0$ or equality holds in Lemma~\ref{lem-coeff-3-conv}, that is, $f$ is a rotation of a sector function. 
\end{proof}

\vskip.2cm
Now we prove Theorem~\ref{thm-convex-Hardy-low-ord}, according to which for $f\in C$ we have that $f'\in H^p$ for all $p<\frac{1}{1+\beta}$ and $f\in H^q$ for all $q<\frac{1}{\beta}$. Its proof relies on Theorem~\ref{thm-convex-deriv-Hardy}. 

\begin{proof}[Proof of Theorem~\ref{thm-convex-Hardy-low-ord}]
Let $p_0<\frac{1}{1+\beta}$ and write $\ve = \frac{1}{p_0} - (1+\beta)>0$. Let $\ze\in\SD$ be such that $|A_f(\ze)|<\beta+\ve$ and write $F_{\ze}$ for the corresponding Koebe transform \eqref{form-Koebe-transf}. By Theorem~\ref{thm-convex-deriv-Hardy}, $F'_{\ze}$ belongs to $H^p$ for every $p< (1+|A_f(\ze)|)^{-1}$. Since
$$
|f'\circ \psi(z)| \leq 4 |f'(\ze)| \, |F'_{\ze}(z)|, \qquad \text{where} \qquad \psi(z) = \frac{\ze + z }{1+\overline{\ze} z}, 
$$
we have that $f'\circ \psi$ belongs to the same Hardy spaces as $F'_{\ze}$. An appeal to the Corollary in \cite[\S 2.6]{Du1} shows that $f'\in H^p$ for every $p< (1+|A_f(\ze)|)^{-1}$. Since $p_0=\frac{1}{1+\beta+\ve}$ lies in this range, the first assertion in the statement has been proved. The second assertion follows immediately from Theorem~\ref{thm-Hardy-Littl}. 

\end{proof}

\section{Convex functions and the angle at infinity} \label{sect-convex-angle}
%%%%%%%%%%%%%%%%%%%%%%%%%%%%%%%%%%%%%%%%%%%%%%%%%%
%%%%%%%%%%%%%%%%%%%%%%%%%%%%%%%%%%%%%%%%%%%%%%%%%%
We begin by giving an equivalent definition for the angle at infinity by making use of that fact that half-tangents exist at all boundary points of a convex domain; see \cite[\S 3.5]{Po92}. Let $\Om$ be an unbounded convex domain, $f$ be one of its Riemann mappings and assume that $f(e^{it_0})=\infty$ for some $t_0\in\SR$. We set 
\be \label{form-half-tang}
\vt^{\pm} = \lim_{ t \to t_0^{\pm} } \arg f(e^{it})
\ee
to be the argument of the two half-tangents at infinity. We will prove that the difference $ \Delta\vt = \vt^+ - \vt^-$ is equal to the angle at infinity.

\begin{lemma} \label{lem-half-tang}
$\Theta = \Delta\vt$
\end{lemma}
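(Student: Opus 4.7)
The plan is to establish the two inequalities $\Delta\vartheta \le \Theta$ and $\Theta \le \Delta\vartheta$ separately.

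For the lower bound, I would suppose $\Omega$ is contained in a sector $\Sigma$ of aperture $\theta \in [0,\pi]$. Then $\overline{\Omega} \subset \overline{\Sigma}$ in the Riemann sphere, so as $t \to t_0^\pm$ the points $f(e^{it})$ lie in $\overline{\Sigma}$ and escape to infinity with argument approaching $\vartheta^\pm$. Since the closed sector $\overline{\Sigma}$ has angular range of width exactly $\theta$, both $\vartheta^+$ and $\vartheta^-$ must lie in this range, so $\Delta\vartheta = \vartheta^+ - \vartheta^- \le \theta$. Infimizing over admissible $\theta$ yields $\Delta\vartheta \le \Theta$.

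For the upper bound, I would first dispose of two boundary cases: if $\Omega$ is a half-plane then it is itself a sector of aperture $\pi$, and if $\Omega$ is an infinite strip then it is, by the paper's convention, a sector of aperture $0$; in both cases $\Theta$ coincides trivially with $\Delta\vartheta$. In all remaining cases, fix $\eta > 0$ with $\Delta\vartheta + \eta < \pi$ and choose $\delta > 0$ so that $|\arg f(e^{it}) - \vartheta^\pm| < \eta/4$ for $t$ within $\delta$ of $t_0$ on the corresponding side. Set $\phi = \tfrac12(\vartheta^+ + \vartheta^-)$ and consider the family of sectors $\Sigma_R$ with vertex $P_R = -R e^{i\phi}$, bisector direction $e^{i\phi}$, and aperture $\Delta\vartheta + \eta$. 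I would then verify that $\Omega \subset \Sigma_R$ for all sufficiently large $R$: the compact image $K = f(\{e^{it} : \delta \le |t - t_0| \le \pi\})$ is seen from $P_R$ at angles arbitrarily close to $\phi$ once $R$ is large, hence it lies inside $\Sigma_R$; meanwhile, for points $w \in \Omega$ of large modulus, convexity of $\Omega$ together with the angular control of $\arg f$ near $t_0$ forces $\arg w$ to lie within $\eta/4$ of the range $[\vartheta^-, \vartheta^+]$, so that $\arg(w - P_R)$ remains in the angular range of $\Sigma_R$. Letting $\eta \to 0$ then gives $\Theta \le \Delta\vartheta$.

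The main obstacle is the uniform angular control on the unbounded part of $\Omega$: one must translate the boundary convergence $\arg f(e^{it}) \to \vartheta^\pm$ into a corresponding constraint on $\arg w$ for interior points $w \in \Omega$ of large modulus. This follows from convexity --- the interior at infinity is squeezed between the two asymptotic branches of $\partial\Omega$ --- but it requires a careful geometric argument, after which combining with the compact-set estimate and choosing $R$ sufficiently large simultaneously for both parts yields the desired inclusion.
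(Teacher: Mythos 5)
Your argument is correct in outline, and for the inequality $\Theta\le\Delta\vartheta$ it is essentially the paper's: both proofs place a sector of aperture $\Delta\vartheta+\eta$, bisected by $e^{i\phi}$ with vertex receding to infinity along $-e^{i\phi}$, and show it eventually contains $\Omega$ (the paper normalizes $\phi=0$ and phrases this as sliding the sector to the left). For the reverse inequality your route is genuinely different: the paper inscribes a sector $Q$ of aperture $\Delta\vartheta-\varepsilon$ \emph{inside} $\Omega$, so that any sector containing $\Omega$ contains $Q$ and hence has aperture at least $\Delta\vartheta-\varepsilon$, whereas you work inside an arbitrary containing sector $\Sigma$ and observe that the limits $\vartheta^{\pm}$ of $\arg f(e^{it})$ must lie in the angular range of $\overline{\Sigma}$, which has length $\theta$. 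Your version is shorter and avoids verifying $Q\subset\Omega$; its one weak point is the degenerate case $\theta=0$, where $\Sigma$ is a strip whose set of directions at infinity is a pair of antipodal points rather than an interval of length $0$, so you must separately exclude $\Delta\vartheta=\pi$ for a convex domain contained in a strip (the two arcs adjacent to a single prime end at infinity cannot escape toward opposite ends of the strip). Be aware that both proofs defer a nontrivial geometric step: the paper asserts that the slid sector eventually contains $\Omega$, while you assert that $\arg w$ is pinched into an $\eta/4$-neighborhood of $[\vartheta^-,\vartheta^+]$ for interior points $w$ of large modulus (a recession-cone fact). You could bypass the latter by checking only that $\partial\Omega\subset\overline{\Sigma_R}$ --- using that the origin-centered sector of aperture $\Delta\vartheta+\eta/2$ about $e^{i\phi}$ lies in $\Sigma_R$ for every $R>0$, so only the compact part $K$ needs $R$ large --- and then invoking that $\overline{\Omega}$ is the closed convex hull of $\partial\Omega$ once half-planes are excluded.
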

\begin{proof}
We may translate in order to have $0\in \Om $ and rotate so that 
$$
\vt := \vt^+ = -\vt^- \in [0,\tfrac{\pi}{2}], 
$$
in which case $\Delta\vt=2\vt$. 

\vskip.1cm
\noindent \underline{Step (i): $\Theta \leq \Delta\vt$}. Let $\ve>0$ and consider the sectors 
$$
P_x = \left\{z\in\SC : |\arg(z-x)|< \vt+ \tfrac{\ve}{2} \right\}, \qquad x<0. 
$$
We will show that there exists $x_0<0$ sufficiently small so that $\Om \subset P_{x_0}$. If this is not true then let $w$ be the point leftmost in $\partial \Om \cap \partial P_x$, with ${\rm Im}\, w >0$, and see that the half-line $\ell$ that extends from $w$ to infinity along $\partial P_x$ cannot lie entirely within $\Om$ since 
$$
\lim_{ \ell \ni z \to \infty } \arg z = \vt+ \tfrac{\ve}{2}
$$ 
is larger than $\vt$. Hence, by convexity, we have that $\Om \cap \ell $ must be a single open line segment. Now it is evident that sliding the sector to the left we can obtain $P_{x_0} \supset \Om $ for some $x_0<0$ sufficiently small. Hence, $\Theta \leq \Delta\vt + \ve$ and letting $\ve\to0$ we get that $\Theta \leq \Delta\vt$. 

\vskip.1cm
\noindent \underline{Step (ii): $\Theta \geq \Delta\vt$}. We begin with the case $\Theta =0$ and assume that $\vt>0$ in order to get a contradiction. Then $\Om$ lies in some symmetric with respect to the real line sector $P$ of aperture $\Delta\vt/3$. But this is impossible since for points $z$ in the upper half plane 
$$
\lim_{ \partial P \ni z \to \infty } \arg z   = \tfrac{\vt}{3} < \vt = \lim_{ \partial \Om \ni z \to \infty } \arg z. 
$$
Hence $\vt=0$ in this case. 

We assume that $\Theta \in(0,\pi]$. If $\vt=0$ then we work as in step (i) in order to find some sector of aperture $\Theta/2$ that contains $\Om$, a contradiction. Hence, $\vt>0$. Let $\ve>0$ and consider  
$$
Q = \left\{z\in\SC : |\arg z|< \vt- \tfrac{\ve}{2} \right\}. 
$$
Now if the ray $\partial Q \cap \{{\rm Im}\, z >0\}$ intersects $\partial \Om$ then, by convexity, this can only occur at a unique point, so that the ray lies outside $\Om$ at a neighborhood of infinity. But this is impossible since $\arg z = \vt- \tfrac{\ve}{2}$ on this ray, a contradiction. Hence $Q\subset\Om$, so that $\Delta\vt - \ve \leq \Theta$, which gives us the desired conclusion upon letting $\ve\to0$. 
\end{proof}

Let $f\in C$ and see that by the Herglotz formula there exists a unique probability measure $\mu$, supported on $\ST$, such that 
\be \label{Herglotz-C}
1+\frac{zf''(z)}{f'(z)} = \int_{\ST} \frac{1+\la z}{1-\la z} d\mu(\la).
\ee
If $\la_0\in\ST$ is mapped by $f$ to infinity then $\mu$ has a point mass there with $\mu(\la_0	)\geq \frac{1}{2}$, a fact that is usually attributed to Paatero \cite{Pa31}. By convexity, there can be either none, one or two preimages of infinity, the latter only in the case of an infinite strip. 

It is well known that the boundary rotation, \textit{i.e.}, the net change of the direction angle of the tangent, corresponding to an arc $I$ on $\ST$ under a function $f\in C$ equals $2\pi \mu(I)$. To see this,  write $I=(a,b)$ and compute the limit of 
$$
\arg \frac{\partial}{\partial\t} f(re^{i\t}) \bigg|_a^b = \t + \arg f'(re^{i\t}) \bigg|_a^b 
$$
as $r\to1^-$ by making use of either formula (6) in \cite{EK70} or formula (14) in \cite[page 62]{Po92}.

\begin{lemma}\label{lem-angle-point-mass}
Let $f\in C$ with associated measure $\mu$ and assume that $f(\la_0) =\infty$ for some $\la_0 \in\ST$. Then 
$$
\Theta\big(f(\SD)\big) =  \big(2\mu(\la_0)-1\big) \pi. 
$$
\end{lemma}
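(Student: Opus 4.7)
My plan is to evaluate the boundary rotation of $f$ over a shrinking arc around $\la_0$ in two complementary ways and equate the results. Concretely, I would set $\la_0 = e^{it_0}$ and, for small $\ve > 0$, take $I_\ve = \{e^{it} : |t-t_0| < \ve\}$, choosing $\ve$ so that $\la_0$ is the only atom of $\mu$ inside $I_\ve$; this is possible since the atoms of $\mu$ form a countable set. By the Paatero formula recalled just before the lemma, the boundary rotation of $f$ over $I_\ve$ equals $2\pi \mu(I_\ve)$, which tends to $2\pi \mu(\la_0)$ as $\ve \to 0$.

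For the second evaluation I would use the explicit formula from the excerpt, which identifies this boundary rotation with the change of the tangent-direction function $\Psi(t) := \tfrac{\pi}{2} + t + \arg f'(e^{it})$ across the endpoints of $I_\ve$. As $\ve \to 0$ this change converges to the jump of $\Psi$ at $t_0$, which I would identify geometrically via the half-tangents. As $t \to t_0^-$ with $t$ increasing, $f(e^{it})$ escapes to infinity along the half-tangent in direction $\vt^-$, so the tangent direction aligns with $\vt^-$ and $\Psi(t) \to \vt^-$; as $t \to t_0^+$, $f(e^{it})$ returns from infinity along the opposite half-tangent of direction $\vt^+$, but with $t$ increasing the motion is inward, giving $\Psi(t) \to \vt^+ + \pi$. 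Hence the jump equals
$$
(\vt^+ + \pi) - \vt^- = \Delta\vt + \pi = \Theta + \pi,
$$
where the last equality is Lemma~\ref{lem-half-tang}. Equating with the first evaluation yields $2\pi\mu(\la_0) = \Theta + \pi$, whence $\Theta = (2\mu(\la_0) - 1)\pi$.

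The main obstacle will be the rigorous identification of the one-sided limits $\Psi(t_0^\pm) = \vt^-$ and $\vt^+ + \pi$ respectively, since $\arg f'$ may behave delicately as $z \to \la_0$. I would handle this by differentiating the Herglotz representation~\eqref{Herglotz-C} to express $\log f'$ as an explicit integral against $\mu$: the point mass at $\la_0$ generates a logarithmic singularity whose imaginary part supplies the phase shift of $\pi$ between the two sides of $t_0$, while the remaining continuous part of the integral fixes the asymptotic directions $\vt^\pm$ furnished by Lemma~\ref{lem-half-tang}. Alternatively, one can approximate $f$ locally near $\la_0$ by a rotation of the sector map $s_{2\mu(\la_0)-1}$, where the identity is a direct computation that was essentially already performed in the verification of $\beta = \al$ for $s_\al$ in Lemma~\ref{lem-low-ord-conv}.
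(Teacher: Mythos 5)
Your proposal is correct and follows essentially the same route as the paper: both rest on the identity between the boundary rotation over an arc and $2\pi\mu$ of that arc, combined with the half-tangent computation of Lemma~\ref{lem-half-tang}. The only (immaterial) difference is that you evaluate the rotation over a shrinking arc around $\la_0$, obtaining the jump $\Theta+\pi=2\pi\mu(\la_0)$, whereas the paper evaluates it over the complementary arc $\ST\backslash\{\la_0\}$, obtaining $\pi-\Theta=2\pi\big(1-\mu(\la_0)\big)$; since the total rotation is $2\pi$, these are the same computation.
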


\begin{proof}
In case the range of $f$ is an infinite strip the desired equality is evident since $\Theta=0$ and $\mu(\la_0)=\frac{1}{2}$. We assume that $\la_0$ is the unique preimage of infinity. The boundary rotation of $f$ on the finite plane is equal to 
$$
2\pi\mu\big(\ST\backslash\{\la_0\}\big) = 2\pi\big(1-\mu( \la_0 )\big).  
$$
On the other hand, in view of the half-tangents \eqref{form-half-tang}, the boundary rotation of $f$ on the finite plane is equal to 
$$
\vt^- - (\vt^+-\pi) = \pi - \Delta \vt = \pi-\Theta,
$$
by Lemma~\ref{lem-half-tang}. Equating the two the desired equality follows. 

\end{proof}

We now prove Theorem~\ref{thm-angle-l-order}, according to which $\Theta\big(f(\SD)\big) = \beta \pi$ for $f\in C$.

\begin{proof}[Proof of Theorem~\ref{thm-angle-l-order}] 
If $f$ is bounded then $\beta=0$ by a result of Pommerenke \cite{Po08} and, moreover, we clearly have that $\Theta=0$, so that the desired conclusion holds in this case. We assume that $f$ is unbounded and, after applying a rotation, we may further assume that $f(1)=\infty$. Using \eqref{Herglotz-C} we compute
\begin{align*}
A_f(z) & =  \frac{1}{2} (1-|z|^2) \frac{f''(z)}{f'(z)} - \overline{z} \\ 
& =   \int_{\ST} \frac{(1-|z|^2)\la}{1-\la z} \, d\mu(\la) - \int_{\ST} \overline{z} \, d\mu(\la)\\ 
& =   \int_{\ST} \frac{\la - \overline{z} }{1-\la z} \, d\mu(\la).  
\end{align*}
Applying the triangle inequality twice we obtain 
\begin{align*}
| A_f(z)| & = \left| \frac{1-\overline{z}}{1-z} \, \mu(1) + \int_{\ST\backslash\{1\}} \frac{\la( 1-  \overline{\la z})}{1-\la z} \, d\mu(\la) \right| \\
& \geq \mu(1) - \left| \int_{\ST\backslash\{1\}} \frac{\la( 1-  \overline{\la z})}{1-\la z} \, d\mu(\la) \right| \\
& \geq \mu(1) - \mu(\ST\backslash\{1\}) \\ 
& = 2 \mu(1) -1 \\
& = \frac{\Theta}{\pi},  
\end{align*}
by Lemma~\ref{lem-angle-point-mass}. Now, for $x\in(0,1)$ we have that 
$$
A_f(x)  = \int_{\ST} \frac{\la - x }{1-\la x} \, d\mu(\la)  = \mu(1) + \int_{\ST\backslash\{1\}} \frac{\la -x }{1-\la x} \, d\mu(\la), 
$$
where the integrand in the last expression is uniformly bounded and for each $\la\in\ST\backslash\{1\}$ it converges to $-1$ when $x\to1^-$. By the dominated convergence theorem we have that 
$$
\lim_{x\to1^-} A_f(x) = \mu(1) - \mu(\ST\backslash\{1\}) = \frac{\Theta}{\pi}. 
$$ 
The proof is complete. 

\end{proof}

We mention here, without providing the details, that a slightly more involved proof of Theorem~\ref{thm-angle-l-order} would be more revealing: the Koebe transform of $f$, as its parameter tends to the preimage of infinity, converges locally uniformly in $\SD$ to a sector mapping of aperture $\Theta$, whose second coefficient is equal to $\frac{\Theta}{\pi}$. We chose the proof we presented in favor of simplicity.

\section{Convex functions and smoothness at the boundary} \label{sect-Lipschitz}
%%%%%%%%%%%%%%%%%%%%%%%%%%%%%%%%%%%%%%%%%%%%%%%%%%
%%%%%%%%%%%%%%%%%%%%%%%%%%%%%%%%%%%%%%%%%%%%%%%%%%
We will be needing a theorem of Hardy and Littlewood, according to which for a function $f\in H^p$ we have that 
\be \label{eq-thm-HL-smooth}
f(e^{i\t}) \in \Lambda_t^p \qquad \Longleftrightarrow \qquad M_p(r,f') \in O\big( (1-r)^{t-1} \big), \quad r\to1;
\ee
see Theorem~5.4 in \cite[\S 5.2]{Du1}. 

\begin{proof}[Proof of Theorem~\ref{thm-C-smooth-general}]
As in the proof of Theorem~\ref{thm-convex-deriv-Hardy}, we use the distortion estimate from Theorem~\ref{thm-growth-dist-C} and apply Prawitz' Theorem~\ref{thm-Prawitz} to the starlike function $g(z)=zf'(z)$ in order to deduce 
$$
M_p^p(r,g) \leq p \int_0^r \frac{1}{t} M_{\infty}^p(t,g)  dt  \leq p \int_0^r \frac{t^{p-1} \, dt}{(1-t)^{p(1+|a_2|)}} \leq \frac{ c }{ (1-r)^{p(1+|a_2|)-1}}, 
$$
for some constant $c>0$. Hence
$$
M_p(r,f') \in O\left( \frac{ 1 }{ (1-r)^{1+|a_2|-\frac{1}{p}}}  \right), \qquad r\to1. 
$$
An application of \eqref{eq-thm-HL-smooth} yields the desired result since for $t= \frac{1}{p}-|a_2|$, $p<1/|a_2|$ implies $t>0$, while $p\geq 1$ implies $t \leq 1$. 

For the sharpness we fix $\al \in [0,1)$ and $1\leq p < \frac{1}{\al}$, and see that the boundary function of $s_\al$, the sector function \eqref{formula-sector}, cannot belong to $\Lambda_t^p$, with $t= \frac{1}{p}-\al+\ve$, for any $\ve>0$. If it did, then by \eqref{eq-thm-HL-smooth} we would have that
$$
M_p(r,s_\al') \leq \frac{ c_1 }{ (1-r)^{1-\frac{1}{p}+\al-\ve}}, 
$$
for some $c_1>0$. Since
$$
s_\al'(z) = \frac{1}{ (1-z)^{1+\al} (1+z)^{1-\al} }, 
$$
we see that the above would imply 
\begin{align*}
\frac{ c_1^p }{ (1-r)^{(1+\al-\ve)p-1}} & \geq \int_0^{2\pi} \frac{d\t}{ |1-z|^{(1+\al)p} |1+z|^{(1-\al)p} }, \qquad z=re^{i\t} \\ 
& \geq c_2 \int_0^{\pi} \frac{d\t}{ |1-z|^{(1+\al)p} } \\
&    =  c_2 \int_0^{\pi} \frac{d\t}{ [ (1-r)^2+4r\sin^2(\t/2) ]^{ \frac{(1+\al)p}{2}} } \\
& \geq c_2 \int_0^{\pi} \frac{d\t}{ [ (1-r)^2+r\t^2 ]^{ \frac{(1+\al)p}{2}} } \\
& = \frac{ c_2  }{ (1-r)^{(1+\al)p-1}} \int_0^{ \frac{\sqrt{r}\pi }{ 1-r } } \frac{dt}{  \sqrt{r} [ 1+t^2 ]^{ \frac{(1+\al)p}{2}} }  \\ 
& > \frac{ c_2  }{ (1-r)^{(1+\al)p-1}} \int_0^{ \frac{ 1 }{ 1-r } } \frac{dt}{ [ 1+t^2 ]^{ \frac{(1+\al)p}{2}} }, 
\end{align*}
after the change of variabless $\sqrt{r} \t = (1-r) t$, for $r$ sufficiently close to 1 and for some $c_2>0$. But this leads to 
$$
0 < \int_0^{ \frac{ 1 }{ 1-r } } \frac{dt}{ [ 1+t^2 ]^{ \frac{(1+\al)p}{2}} }  \leq  c_3 (1-r)^{\ve p} \to 0, \qquad \text{as} \;\; r\to1, 
$$
for some $c_3>0$, a contradiction. 
\end{proof}

We now assume that $a_2=0$ and first prove the following lemma. Recall that, in this case, $|a_3|\leq1/3$ by Lemma~\ref{lem-coeff-3-conv}. 

\begin{lemma} \label{lem-C-distort-zero}
Let $f\in C$ with $a_2=0$. Then
$$
|f'(z)| \leq (1-|z|)^{ - \frac{2(1+3|a_3|)}{3(1+|a_3|)} }, \qquad z\in\SD.
$$
\end{lemma}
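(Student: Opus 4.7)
The plan is to extract the sharper distortion estimate from the fact that the Carath\'eodory-class function $h(z):=1+zf''(z)/f'(z)$ has not only $c_1=2a_2=0$ but also the prescribed second coefficient $c_2=6a_3-4a_2^2=6a_3$. A Schwarz--Pick argument will convert this additional coefficient information into a sharp radial bound on $f''/f'$, whose integration yields the pointwise estimate, refining the bound $|f'(z)|\le 1/(1-|z|^2)$ that Theorem~\ref{thm-growth-dist-C} gives when $a_2=0$.

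I first form the Cayley-type transform $\tilde h=(h-1)/(h+1)\colon\SD\to\SD$ with $\tilde h(0)=0$. Since $c_1=0$, a short expansion shows $\tilde h(z)=\tfrac{c_2}{2}z^2+O(z^3)=3a_3\,z^2+O(z^3)$, so $B(z):=\tilde h(z)/z^2$ extends to a holomorphic self-map of $\overline{\SD}$ with $B(0)=3a_3$. The Schwarz--Pick lemma applied to $B$ then gives
\[
|B(z)|\,\le\, B_{\max}(|z|)\,:=\,\frac{3|a_3|+|z|}{1+3|a_3|\,|z|}, \qquad z\in\SD.
\]
Solving back for $h$ yields $f''(z)/f'(z)=(h(z)-1)/z=2zB(z)/(1-z^2 B(z))$. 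The chain of estimates $\log|f'(z)|\le|\log f'(z)|\le \int_0^r|f''(te^{i\theta})/f'(te^{i\theta})|\,dt$, combined with the triangle inequality $|1-w^2B(w)|\ge 1-t^2 B_{\max}(t)$ for $|w|=t$, produces
\[
\log|f'(z)|\,\le\, \int_0^r \frac{2t\,B_{\max}(t)}{1-t^2 B_{\max}(t)}\,dt.
\]

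To evaluate this integral I set $\alpha:=3|a_3|$ and exploit the factorisation
\[
1-t^2 B_{\max}(t)\,=\,\frac{(1-t)\bigl(1+(1+\alpha)t+t^2\bigr)}{1+\alpha t},
\]
which reduces the integrand to $2t(\alpha+t)/[(1-t)Q(t)]$ with $Q(t):=1+(1+\alpha)t+t^2$. A partial fraction decomposition rewrites this as
\[
\frac{A}{1-t}\,-\,\frac{2\,Q'(t)}{(3+\alpha)\,Q(t)},\qquad A=\frac{2(1+\alpha)}{3+\alpha},
\]
whose integral from $0$ to $r$ equals $A\log\frac{1}{1-r}-\frac{2}{3+\alpha}\log Q(r)$. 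Since $Q(r)\ge 1$, the second term is non-positive and can be dropped, leaving $|f'(z)|\le(1-r)^{-A}$ with $A=2(1+3|a_3|)/[3(1+|a_3|)]$, precisely the claim.

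The main obstacle, once the Schwarz--Pick reduction is in place, is algebraic: to spot the factorisation $1+\alpha t-\alpha t^2-t^3=(1-t)(1+(1+\alpha)t+t^2)$ and to arrange the partial fraction coefficients so that the residue at $t=1$ is precisely $A=2(1+3|a_3|)/[3(1+|a_3|)]$ while the remaining logarithmic term has the correct sign to be discarded. The remaining steps -- the Schwarz--Pick inequality and the integrated triangle inequality -- are routine.
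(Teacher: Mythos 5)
Your proof is correct and follows essentially the same route as the paper's: the function you call $B$ is exactly the paper's $\psi$ (obtained there from the standard characterization $1+zf''/f'=(1+z\varphi)/(1-z\varphi)$ rather than via a Cayley transform), the Schwarz--Pick bound, the factorisation $1+\alpha t-\alpha t^2-t^3=(1-t)Q(t)$ and the partial-fraction split are identical. The only cosmetic difference is that you integrate the exact decomposition and then drop the non-positive term $-\tfrac{2}{3+\alpha}\log Q(r)$, while the paper discards the corresponding negative term pointwise before integrating (and treats the trivial case $3|a_3|=1$ separately, which your computation in fact covers uniformly).
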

\begin{proof}
Let $\ga=3|a_3|$. According to Lemma~\ref{lem-coeff-3-conv}, we have that $\ga=1$ only for rotations of the function $s_0(z)= \frac{1}{ 2 }\log\frac{ 1+ z }{ 1- z }$, which satisfy the statement. 

We assume that $\ga<1$. The well-known characterization of the class $C$ tells us that
$$
1+\frac{z f''(z)}{f'(z)} = \frac{1+z\vp(z)}{1-z\vp(z)}, \qquad z\in\SD,
$$
for some analytic $\vp:\SD\to \overline{\SD}$. Writing 
$$
\frac{f''(z)}{f'(z)} = \frac{2\vp(z)}{1-z\vp(z)}
$$
we see that $\vp(0) = a_2 = 0$, so that $\vp(z) = z \psi(z)$ for some $\psi:\SD\to \overline{\SD}$. A standard calculation shows that $\psi(0)=3 a_3$. Upon integrating and taking real parts we obtain
$$
\log |f'(z)| = 2\,  {\rm Re} \int_{[0,z]} \frac{\ze\psi(\ze)}{1-\ze^2\psi(\ze)} d\ze \leq  2 \int_{[0,z]} \left| \frac{\ze\psi(\ze)}{1-\ze^2\psi(\ze)} \right| |d\ze|, \qquad z=re^{i\t}.  
$$
A simple consequence of the Schwarz-Pick lemma is that 
\be \label{ineq-Schw-Pick-consq}
|\psi(\ze)| \leq \frac{\ga + \rho }{1+\ga \rho }, \qquad \rho=|\ze|<1, 
\ee
which leads to 
\begin{align*}
\left| \frac{\ze\psi(\ze)}{1-\ze^2\psi(\ze)} \right| \leq &  \frac{ \rho | \psi(\ze) | }{ 1-\rho^2|\psi(\ze)|} \\ 
 \leq & \frac{ \rho(\ga +\rho) }{(1-\rho) [ 1+(1+\ga)\rho +\rho^2 ] } \\
= &  \frac{1}{3+\ga} \left( \frac{1+\ga}{1-\rho}  - \frac{1+\ga +2\rho}{1+(1+\ga)\rho +\rho^2}  \right) \\ 
< & \frac{1+\ga}{(3+\ga)(1-\rho)}
\end{align*}
Therefore, we get that 
$$ 
\log |f'(z)| < \frac{2(1+\ga)}{3+\ga} \int_0^r \frac{d\rho}{1-\rho} = \frac{2(1+\ga)}{3+\ga}  \log\frac{1}{1-r}, 
$$
from which the desired inequality follows. 

\end{proof}

\begin{proof}[Proof of Theorem~\ref{thm-C-smooth-a2=0}]

If $f$ is not a rotation of $s_0(z) = \frac{1}{2}\log\frac{1+z}{1-z}$ then $\ga=3|a_3|<1$, hence $t=\frac{1-\ga}{3+\ga} \in (0,\frac{1}{3}]$, and the statement follows by a direct application of Lemma~\ref{lem-C-distort-zero} and \cite[Theorem~5.1 in \S 5.2]{Du1}. 

\end{proof}

\section{The Hardy space for other geometric subclasses} \label{sect-subclases-Hardy}
%%%%%%%%%%%%%%%%%%%%%%%%%%%%%%%%%%%%%%%%%%%%%%%%%%
%%%%%%%%%%%%%%%%%%%%%%%%%%%%%%%%%%%%%%%%%%%%%%%%%%
Recall that $S^*$ stands for the class of functions in $S$ whose range is starlike with respect to the origin. We prove Theorem~\ref{thm-Hardy-star}: if $f\in S^*$ then $f'\in H^p$ for all $p<\frac{2}{4+|a_2|}$ and $f\in H^q$ for all $q<\frac{2}{2+|a_2|}$.  

\begin{proof}[Proof of Theorem~\ref{thm-Hardy-star}]
By Alexander's theorem we may write $f(z)=zh'(z)$ for some $h\in C$. We compute $f'(z) = h'(z) \big[1+z\frac{h''(z)}{h'(z)} \big]$. Also, we write $h(z) = \sum_{n=1}^{\infty} c_n z^n$ and find that $a_2=2c_2$. Setting $\al=|a_2|$ and applying H\"older's inequality with the conjugate parameters 
$$
s=\frac{4+\al}{2+\al} \qquad \text{and} \qquad t=2+\frac{\al}{2}
$$
we get 
$$
\int_0^{2\pi} |f'(e^{i\t})|^p  \tfrac{d\t}{2\pi}  \leq \left( \int_0^{2\pi} |h'(e^{i\t})|^{ps} \tfrac{d\t}{2\pi} \right)^{ \frac{1}{s} } \left( \int_0^{2\pi} \left|1+e^{i\t} \frac{h''(e^{i\t})}{h'(e^{i\t})}\right|^{pt} \tfrac{d\t}{2\pi} \right)^{ \frac{1}{t} }.  
$$
Now the first term is finite in view of Theorem~\ref{thm-convex-deriv-Hardy} and the fact that $ps<\frac{2}{2+\al}=\frac{1}{1+|c_2|}$, while the second term is finite since $pt<1$ and the integrand has positive real part. The estimate for $f$ follows from Theorem~\ref{thm-Hardy-Littl}. 

For the sharpness, we see that if $h=s_\ga$ is a sector function \eqref{formula-sector} with $\ga\in[0,1]$ then the corresponding starlike function via Alexander's theorem is 
$$
f(z) = \frac{z}{ (1-z)^{1+\frac{\al}{2}} (1+z)^{1-\frac{\al}{2}} }, 
$$
with $a_2=\al=2\ga$. This functions does not belong to $H^{ \frac{2}{2+\al} }$ and its derivative is 
$$
f'(z) = \frac{1+\al z + z^2}{ (1-z)^{2+\frac{\al}{2}} (1+z)^{2-\frac{\al}{2}} }, 
$$
which does not belong to $H^{ \frac{2}{4+\al} }$. 
\end{proof}

Geometrically, the starlike function proving the sharpness in Theorem~\ref{thm-Hardy-star} maps $\SD$ onto the complement of two slits symmetric with respect to the real axis lying on the rays $\arg w = \pm \frac{\pi}{4}(2+\al)$. The modulus of the endpoints of these two rays is $ (2-\al)^{ - \frac{2-\al}{4} } (2+\al)^{ - \frac{2+\al}{4} }$.

\vskip.2cm
%%%%%%%%%%%%%%%%%%%%%%%%
We turn to the class $K$ of functions $f$ in $S$ which are close-to-convex, \emph{i.e.}, that satisfy 
\be \label{repr-close-to-conv}
{\rm Re} \left(  \la  \frac{f'(z)}{g'(z)} \right) > 0, \qquad z\in\SD, 
\ee
for some $g\in C$ and $\la\in\ST$. It is known that the class $K$ coincides with the class of \emph{linearly accessible} functions, that is, functions whose range is the complement of the union of mutually disjoint half-lines; see \cite[\S 2.6]{Du2}. 

Using Baernstein's star function, Leung \cite{Leu79} proved that every $f\in K$ satisfies 
$$
M_p(r,f') \leq M_p(r,k'), \qquad r<1, \, p>0, 
$$
where $k$ is the Koebe function (see also \cite[\S 7.5]{Du2}). Hence, for every $f\in K$, the derivative $f'$ belongs to $H^p$ for $p<1/3$, and this is sharp since $k'(z)=\frac{1+z}{(1-z)^3}$ does not belong to $H^{1/3}$. The inclusion $K\subset H^p$ for $p<1/2$, derived from Prawitz' theorem, is sharp because $k\in K$. We now prove Theorem~\ref{thm-close-to-convex-Hardy}, according to which the above Hardy space estimates are sharp whenever $a_2\neq 0$, while they can be improved to $f'\in H^{1/3}$ and $f\in H^{1/2}$ in the case when $a_2=0$. 

\begin{proof} [Proof of Theorem~\ref{thm-close-to-convex-Hardy}]
In view of \eqref{repr-close-to-conv} we may write $H = \la f' / g'$ and $H(0)=\la = a+ib$, and see that $a\in (0,1]$. We normalize so that $h = (H-ib)/a$ belongs to the Carath\'eodory class $\cP$. Setting
$$
f(z) = \sum_{n=1}^{\infty} a_n z^n, \qquad  g(z) = \sum_{n=1}^{\infty} b_n z^n \qquad \text{and} \qquad  h(z) = \sum_{n=0}^{\infty} c_n z^n, 
$$
we get the relation $a_2 = b_2 + a \overline{\la}c_1/2$ between the first non-trivial coefficients. 

The estimates for the case $a_2\neq0$ have already been discussed and so it remains to prove their sharpness. We let $\la=1$, $g$ to be the half-plane function $s_1$ and $h$ to be a function whose Herglotz measure has point masses $t$ and $1-t$, for $t\in(0,1]$, at the points 1 and $-1$, respectively. Hence, we have that 
$$
g(z)=\frac{z}{1- z} \qquad \text{and} \qquad  h(z) = t \, \frac{1+ z}{1- z} + (1-t) \, \frac{1 - z}{1 + z}, 
$$
which shows that $b_2=1$ and $c_1=4t-2$, so that $a_2 = 2t \in(0,2]$. A simple calculation gives 
$$
f'(z) = t \, \frac{1+ z}{(1- z)^3} + (1-t) \, \frac{1}{1 - z^2}, 
$$
which does not belong to $H^{1/3}$. An integration shows that 
\be \label{extr-funct-K}
f(z) = t \, \frac{z}{(1- z)^2} + \frac{1-t}{2} \, \log\frac{1+z}{1 - z}, 
\ee
which does not belong to $H^{1/2}$. 

\vskip.1cm
We turn to claim (i), the case $a_2=0$. It holds that $c_1=-2\la b_2/a$. We distinguish two cases. First, if $|b_2|=1$, say $b_2=\mu \in\ST$, then $ |c_1| = 2/a \geq 2 $ and since, also, $|c_1| \leq 2$ by Carathéodory's theorem, we get that $a=1$, $\la =1$ and, therefore, $g$ and $h$ must be the half-plane functions
$$
g(z)=\frac{z}{1-\mu z} \qquad \text{and} \qquad  h(z)=\frac{1-\mu z}{1+\mu z}. 
$$
It follows that $f'(z)=\frac{1}{1-\mu^2 z^2}$, which belongs to $H^p$ for all $p<1$. For the case $|b_2| <1$, we set $\beta=|b_2|$ and use H\"older's inequality
$$
\int_{\ST} |f'|^p \leq \left( \int_{\ST} |g'|^{pr} \right)^{1/r} \left( \int_{\ST} |H|^{ps} \right)^{1/s} 
$$
with the conjugate exponents 
$$
r = \frac{2+\beta}{1+\beta} \qquad \text{and} \qquad  s = 2+\beta. 
$$
The right-hand side is bounded whenever $p<\frac{1}{2+\beta}$, since $ps <1$ and $pr<\frac{1}{1+\beta}$, so that Theorem~\ref{thm-convex-deriv-Hardy} can be applied. Since the exponent $p=1/3$ lies in this range, it follows that $f'\in H^{1/3}$. Now the claim that $f\in H^{1/2}$ follows from Theorem~\ref{thm-Hardy-Littl}.

To show the sharpness of (i) we let $\la=1$, $g=s_\beta$ with $\beta\in[0,1)$, and $h\in P$ with a measure consisting of three point-masses: $t, t$ and $1-2t$ at the points $\mu= e^{i\t}, \overline{\mu}$ and 1, respectively, given by
$$
h(z) = t \, \frac{1+\mu z}{1-\mu z} + t \, \frac{1+\overline{\mu} z}{1- \overline{\mu} z} + (1-2t)\frac{1+z}{1-z}, \qquad  t\in\left(0,\tfrac{1}{2}\right), \, \t\in(0,\pi). 
$$
We compute $c_1 = 4t \cos\t +2(1-2t)$ and 
\be \label{form-f'-close-to-conv}
f'(z) = \frac{ 1 }{ (1-z)^{1+\beta}(1+z)^{1-\beta} } \left[t \, \frac{1+\mu z}{1-\mu z} + t \, \frac{1+\overline{\mu} z}{1- \overline{\mu} z} + (1-2t)\frac{1+z}{1-z} \right].  
\ee
Assume in order to get a contradiction that $f$ belongs to $H^q$ for some $q\in(\frac{1}{2},1)$ and recall the growth estimate (see \cite[\S 3.2]{Du1}): 
$$
|f(z)| \leq \frac{c}{(1-r)^{1/q}}, \qquad r=|z|, 
$$
for some $c>0$. Now the bound
$$
|f'(z)| \leq \frac{\hat{c}}{(1-r)^{1+1/q}}, \qquad r=|z|, 
$$
for some $\hat{c}>0$, follows from the Cauchy integral formula. Note that $b_2 = \beta$ and let $\beta$ be such that $\frac{1}{2} < \frac{1}{1+\beta} < q$. We ensure that $a_2=0$ by choosing, say, $t=\frac{3+\beta}{8}$ and $\cos\t = - \frac{1+3\beta}{3+\beta}$, so that 
$$
c_1 = -2 \beta = -2 b_2
$$
is satisfied. We see from \eqref{form-f'-close-to-conv} that the order of growth of $f'$ at $z=1$ is $\beta + 2 > 1+1/q$, which together with the above estimate for $f'$ leads to a contradiction.

Finally, if $f'\in H^p$ for some $p>1/3$ then by Theorem~\ref{thm-Hardy-Littl} we would have that $f\in H^q$ for $q=\frac{p}{1-p} > 1/2$, a contradiction, as we just saw. 

\end{proof}

\vskip.2cm
%%%%%%%%%%%%%%%%%%%%%%%%
Recall from the introduction the subclasses $R$ and $R^+ $ of $S$ consisting of functions whose range is convex in one direction and convex in the positive direction, respectively. Since $R^+ \subset  R \subset K$ the estimates in Theorem~\ref{thm-close-to-convex-Hardy} apply to functions in $R^+$ and $R$ and, as already mentioned, when $a_2\neq 0$  these are optimal (they are $f'\in H^p$ for $p<1/3$ and $f\in H^q$ for $q<1/2$). This can be seen from the function \eqref{extr-funct-K}, used in the proof of Theorem~\ref{thm-close-to-convex-Hardy}, which has $a_2 = 2t \in(0,2]$ and maps $\SD$ onto the complement of two horizontal slits at heights $\pm (1-t)\frac{\pi}{4}$ whose real part extends from $-\infty$ to some point $x_0=x_0(t)$ and, thus, belongs to $R^+$. 

We now prove Theorem~\ref{Hardy-convex-one-direction}, according to which if a function $f\in S$ satisfies $a_2=0$ then 
\begin{itemize}
\item[(i)] if $f\in R$ then $f'\in H^p$ for all $p<1/2$ and $f\in H^q$ for all $q<1$, while 
\item[(ii)] if $f\in R^+$ then $f'\in H^p$ for all $p<1$ and $f\in H^q$ for all $q<\infty$.   
\end{itemize}

\begin{proof}[Proof of Theorem~\ref{Hardy-convex-one-direction}]
For the case (i), let $f\in R$ have $a_2=0$ and apply a rotation, if necessary, in order to get that $f(\SD)$ is convex in the vertical direction. Then, by a theorem of Royster and Ziegler \cite{RZ76} there exist parameters $\mu\in\ST$ and $n\in[0,\pi]$ such that
$$ 
{\rm Re}\big[ -i\mu (1-2\cos(n) \overline{\mu} z + \overline{\mu}^2 z^2) f'(z) \big] \geq 0,  \qquad z\in\SD. 
$$ 
Consider the function $g(z) = \overline{\mu} f(\mu z)$. We set 
\be \label{form-def-h-g'}
h(z)=-i\mu (1-2\cos(n) z + z^2) g'(z)
\ee 
and note that it satisfies ${\rm Re} \, h \geq 0$ in $\SD$. Set $\mu=a+ib$. Since $h(0)=-i\mu =b-ia$ we get that $b\in[0,1]$. If $b=0$ then by the open mapping theorem $h\equiv -ia$ and we find that 
$$
g'(z) = \frac{1}{1-2\cos(n) z + z^2}. 
$$
Since $g''(0)=0$ we see that $\cos(n)=0$, hence $g'(z) = \frac{1}{1+ z^2}$, which belongs to $H^p$ for $p<1$. 

Assume now that $b>0$. It follows from \eqref{form-def-h-g'} that $h'(0) = 2i\mu\cos(n)$. Since $(h+ia)/b$ is in $\cP$, by Carathéodory's theorem we have that $|h'(0)|\leq 2b$, and therefore that $|\cos(n)|\leq b$. We distinguish two cases. First, if $|\cos(n)|=1$ then also $b=1$, hence $a=0$ and $\mu=i$. By the case of equality in Carathéodory's theorem we get that 
$$
h(z) = \frac{1+\la z}{ 1- \la z} =1 +2\la z + \ldots
$$
for some $\la\in\ST$. Hence, $\la=-\cos(n)=\pm 1$. In view of \eqref{form-def-h-g'} we have that 
$$
h(z) = (1+\la z)^2 g'(z)
$$
and, therefore, 
$$
g'(z) = \frac{1}{ 1- \la^2 z^2} =\frac{1}{ 1-  z^2}. 
$$
Now clearly $g'\in H^p$ for all $p<1$. For the remaining case of $|\cos(n)|<1$ we write 
$$
1-2\cos(n) z + z^2 = (\nu-z)(\overline{\nu}-z), \qquad \text{with} \qquad \nu=e^{in} \in \ST\backslash\{\pm1\}, 
$$ 
which shows that this expression cannot be a perfect square. Therefore, by the Cauchy-Scwharz inequality we have that
$$
\int_{\ST} |g'|^p \leq  \left( \int_{\ST} |h|^{2p} \right)^{1/2} \left( \int_{\ST} \frac{1}{|\nu-z|^{2p}|\overline{\nu}-z|^{2p}} \right)^{1/2} < \infty
$$
whenever $p<1/2$. Therefore, we have that $g'\in H^p$ for $p<1/2$ in all subcases of case (i), hence the same is true for $f'$. By Theorem~\ref{thm-Hardy-Littl} we get the desired estimate for $f$. 

The sharpness of case (i) can readily be seen from the function 
$$
f(z) =\frac{z}{1+z^2} = z - z^3 + \ldots, 
$$
whose image domain is $\SC \backslash \{x : |x| \geq 1/2\}$, and which does not belong to $H^{1}$, nor $f'(z) =\frac{1-z^2}{(1+z^2)^2}$ belongs to $H^{1/2}$. Also, clearly, $f\in R\backslash R^+$.

\vskip.1cm
Turning to case (ii), we let $f\in R^+$ with $a_2=0$. For each $z$ in $\SD$ the limit 
$$
\tau = \lim_{t\to\infty} f^{-1}\big( f(z) +t \big)  
$$
exists, belongs to $\ST$ and is independent of $z$; see \cite[p.440]{EKRS10}, for example.  It is well known that the function 
$$
g(z) =  (\tau -z)(1-\overline{\tau}z) f'(z) 
$$
satisfies ${\rm Re}\,g(z) >0$; see \cite[\S 3.5]{ES10}. Since $g(0) = \tau = a + ib$, with $a\in(0,1]$, we may normalize to get that 
$$
h(z)=\frac{g(z)-ib}{a} = 1 + \sum_{n=1}^\infty c_n z^n 
$$ 
belongs to $\cP$. Relating the first coefficients we find that $ac_1=2 \tau a_2-2=-2$, since $a_2=0$. Now, $2=a|c_1| \leq |c_1| \leq 2$, which implies that $a=1$ and $c_1=-2$. Hence, we have that $h(z)=\frac{1-z}{1+z}$ by the case of equality in Carathéodory's theorem. It is therefore clear that 
$$
f'(z) =  \frac{1}{1- z^2}  \qquad \text{and} \qquad f(z) = \frac{1}{2} \, \log\frac{1+z}{1 - z}, 
$$
which proves our claim and its sharpness. 

\end{proof}

\vskip.2cm
%%%%%%%%%%%%%%%%%%%%%%%%
Let $ S_\SR$ denote the class of functions in $S$ that have real coefficients and let $T$ denote the class of typically real functions, \emph{i.e.}, functions $f$ which are holomorphic in $\SD$, are normalized by $f(0)=f'(0)-1=0$, have real values on the interval $(-1,1)$ and non-real values elsewhere in the disk. Clearly $S_\SR \subset T$. We now prove Theorem~\ref{thm-f-Hardy-typ-real}, which states that $T \subset H^p$ for all $p<1/2$, and that this cannot be improved in $S_\SR$, for any $|a_2| \in [0,2]$. 

\begin{proof}[Proof of Theorem~\ref{thm-f-Hardy-typ-real}]
Let $f\in T$ and use Rogosinski's representation $f(z) = \frac{z}{1-z^2} \, h(z)$, where $h\in\cP$ and has real coefficients; see \cite[\S 2.8]{Du1}. An application of the Cauchy-Scwharz inequality shows that
$$
\int_{\ST} |f|^p \leq \left( \int_{\ST} \frac{1}{|1-z^2|^{2p} } \right)^{1/2}  \left( \int_{\ST} |h|^{2p} \right)^{1/2} < \infty
$$
whenever $p<1/2$.

For the second claim and the case $\al \in (0,2]$ we can consider the function \eqref{extr-funct-K}. However, we provide a proof for the whole interval $[0,2]$ by using the extremal functions in Jenkins' \cite{Je54} solution of the Gronwall problem and an observation of Hayman~\cite[p.262]{Hay55}: for each $\al \in [0,2]$ there exists $f\in S_\SR$ with $a_2=\al$ and such that 
$$
\lim_{r\to1} (1-r)^2 M_{\infty}(r,f) = \ga, 
$$
where 
$$
\ga = 4 \la^2 e^{2-4\la} \qquad \text{and} \qquad  \la = \frac{1}{2-\sqrt{2-\al}};  
$$
see also \cite[\S 5.5]{Du2}. If it were true that $f\in H^{1/2}$ then 
$$
\int_0^1 M^{1/2}_{\infty}(r,f) dr < \infty
$$
by an inequality of Hardy and Littlewood (see \cite[Theorem~A]{BGP04} for the optimal version of this inequality). However, this is readily contradicted by the fact that $\ga>0$, which finishes the proof.

\end{proof}

\section{Integrability of the derivative of univalent functions} \label{sect-Hardy-derivative}
%%%%%%%%%%%%%%%%%%%%%%%%%%%%%%%%%%%%%%%%%%%%%%%%%%
%%%%%%%%%%%%%%%%%%%%%%%%%%%%%%%%%%%%%%%%%%%%%%%%%%

We now prove Theorem~\ref{thm-Lohwater-Pir-Rudin}, according to which for every $\al \in[0,2)$ there exists $f \in S_\SR$ such that $a_2 =\al$ and its derivative $f'$ does not belong to the Nevanlinna class $N$. 

\begin{proof}[Proof of Theorem~\ref{thm-Lohwater-Pir-Rudin}]
We keep the notation given in the introduction for the function $\Phi \in H^\infty \cap S_\SR$ constructed in \cite{LPR55}, whose derivative $\Phi'$ has radial limits almost nowhere, so that $\Phi'\notin N$. Recall that the second coefficient of $\Phi$ is equal to $1/4$. 

For $\al =0$ we consider the square root transformation $f(z) = \sqrt{ \Phi(z^2) }$ which produces an odd function in $S_\SR$. If $f'$ had radial limits in some open subset of $\ST$ then so would $\Phi'$ since $f(z) f'(z) = z \Phi'(z^2)$, a contradiction. Hence $f'\notin N$.

For $\al \in(0,2)$ we consider the composition $f = k_r \circ g$, where $k_r(z) = \frac{1}{r} k(rz)$, with $r<1$, is a dilation of the Koebe function $k(z)=\frac{z}{(1-z)^2}$, and $g$ is the integral transform 
$$
g(z) = \int_0^z \Phi'(\ze)^\ve d\ze, \qquad z\in\SD, 
$$
which is univalent for $0<\ve \leq 1/4$ by a theorem of Pfaltzgraff \cite{Pf75}. To show that the composition is well defined we prove that for each $r<1$ there exists $\ve>0$ sufficiently small so that the range of $g$ lies in the disk $|z|<1/r$. Indeed, by the distortion theorem (see \cite[\S 2.3]{Du2}) we have that 
\begin{align*}
|g(z)| & \leq |z| \int_0^1 |\Phi'(tz)|^\ve \, dt \\ 
 & \leq |z| \int_0^1 \frac{ (1+t|z|)^\ve }{ (1-t|z|)^{3\ve} } \, dt \\ 
&  < 2^\ve |z| \int_0^1 \frac{ dt }{ (1-t|z|)^{3\ve} } \\ 
& =  \frac{ 2^\ve }{ 1-3\ve } \, \big[1-(1-|z|)^{1-3\ve} \big] \\ 
& < \frac{ 2^\ve }{ 1-3\ve }. 
\end{align*}
Since the last expression is increasing with $\ve$ there exists a unique $\ve_0=\ve_0(r)>0$ for which $\frac{ 2^{\ve_0} }{ 1-3\ve_0 }=\frac{1}{r}$. It follows that if the parameters lie in 
$$
\Om = \big\{(r,\ve) \, : \, r\in(0,1), \;  0< \ve \leq \min\{\ve_0(r) , \tfrac{1}{4} \} \big\}
$$
we have that $f$ is a well defined function in $S$. Moreover, $f\in S_\SR$ since all functions involved have real coefficients. In view of 
$$
f'(z) = k'\big( rg(z) \big) \Phi'(z)^\ve
$$
it is clear that $f'$ has radial limits almost nowhere, hence $f'\notin N$. 

It remains to show that the equation 
$$
a_2(f) = 2r + \frac{\ve}{4} = \al
$$
has a solution in $\Om$ for each $\al\in(0,1)$. It is not hard to give explicit solutions, but for our purpose it suffices to see that $\ve_0$ is strictly decreasing to $\ve_0(1)=0$ since 
$$
\ve_0'(r) = -\frac{2^{\ve_0}}{ 3+r2^{\ve_0}\log2 } <0,
$$
so that $\Om$ is a connected set, and note that 
$$
\inf_\Om a_2(f) =0 \qquad \text{and} \qquad \sup_\Om a_2(f) =2, 
$$
after which the existence of a solution follows from the intermediate value theorem.

\end{proof}

\section{Appendix: Convex functions and probability measures } \label{sect-CO-FMR-measures}
%%%%%%%%%%%%%%%%%%%%%%%%%%%%%%%%%%%%%%%%%%%%%%%%%%
%%%%%%%%%%%%%%%%%%%%%%%%%%%%%%%%%%%%%%%%%%%%%%%%%%
In view of Theorem~\ref{thm-a=0-CO-FMR} convex functions with vanishing second coefficient are very specific: 
\vskip.15cm
\begin{center}
\textit{If $f\in C$ has $a_2=0$ then $f$ is either bounded or a rotation of $s_0(z)= \frac{1}{2} \log\frac{1+z}{1-z}$. }
\end{center} 
\vskip.15cm
The proof of Theorem~\ref{thm-a=0-CO-FMR} relies on the Schwarzian derivative, the fact that $C$ is included in the Nehari class and on Lemma 4 from \cite{CO95}. Alternatively, it follows from Theorem 3 in \cite{FMR98}, which provides a structural formula for unbounded convex functions. In Theorem~\ref{prop-FMR-measures} we provide an alternative statement and proof of Theorem~\ref{thm-a=0-CO-FMR} using the language of probability measures. 

Let $f\in C$ and $\mu$ be its associated measure via the Herglotz formula \eqref{Herglotz-C}. Note that $a_2=\int_{\ST}\la d\mu(\la)$ and recall that a point $\la_0\in\ST$ is mapped by $f$ to infinity if and only if $\mu$ has a point mass there with $\mu(\la_0)\geq \frac{1}{2}$. Also, let us denote by $\delta_{\la}$ the Dirac measure on $\la$.

\begin{theorem} \label{prop-FMR-measures}
If a probability measure $\mu$ on $\ST$ has vanishing first moment and has a point mass at $\la_0$ with $\mu(\la_0)\geq 1/2$ then 
$$
\mu = \frac{1}{2} (\delta_{\la_0} + \delta_{-\la_0}). 
$$
\end{theorem}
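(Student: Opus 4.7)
The plan is to decompose $\mu$ into its point mass at $\la_0$ and the rest, and then to extract both conclusions ($\mu(\la_0)=1/2$ and the second atom at $-\la_0$) from a single case of equality in an elementary bound on the barycenter of a probability measure on $\ST$.

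Set $t=\mu(\la_0)\in[1/2,1]$. The case $t=1$ is immediately ruled out because it would give $\mu=\delta_{\la_0}$, whose first moment is $\la_0\neq 0$. Hence $t\in[1/2,1)$, and we may write
\begin{equation*}
\mu = t \, \delta_{\la_0} + (1-t)\, \nu,
\end{equation*}
where $\nu$ is a probability measure on $\ST\setminus\{\la_0\}$. The vanishing-first-moment hypothesis then becomes
\begin{equation*}
0 \;=\; \int_{\ST} \la \, d\mu(\la) \;=\; t\,\la_0 + (1-t)\int_{\ST}\la\, d\nu(\la),
\end{equation*}
so that
\begin{equation*}
\int_{\ST}\la\, d\nu(\la) \;=\; -\frac{t}{1-t}\, \la_0.
\end{equation*}

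The key observation is the elementary inequality $\bigl|\int_{\ST}\la\, d\nu(\la)\bigr|\leq 1$ valid for every probability measure $\nu$ on $\ST$, together with the fact that equality occurs if and only if $\nu$ is a Dirac mass at a point of $\ST$. Combining this with the identity above yields
\begin{equation*}
\frac{t}{1-t} \;=\; \left|\int_{\ST}\la\, d\nu(\la)\right| \;\leq\; 1.
\end{equation*}
Since $t\geq 1/2$ also forces $t/(1-t)\geq 1$, we conclude $t/(1-t)=1$, i.e. $t=1/2$, and simultaneously that $\nu$ is a Dirac mass at some $\mu_0\in\ST$. Substituting back gives $\mu_0=-\la_0$, hence $\mu=\tfrac{1}{2}(\delta_{\la_0}+\delta_{-\la_0})$, as claimed.

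There is no real obstacle here; the only thing to be careful about is the trivial case $t=1$ (which would make the decomposition degenerate) and the precise form of the equality statement in the barycenter inequality, which is what upgrades the conclusion from an inequality to the explicit two-atom measure.
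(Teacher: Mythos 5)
Your argument is correct. The decomposition $\mu = t\,\delta_{\la_0}+(1-t)\nu$, the identity $\int_\ST \la\, d\nu = -\frac{t}{1-t}\la_0$, and the equality case of the barycenter bound $|\int_\ST \la\, d\nu|\le 1$ (equality iff $\nu$ is a Dirac mass, since $|\int \la\,d\nu|=\int|\la|\,d\nu$ forces $\la$ to be $\nu$-a.e.\ constant on $\ST$) together give both $t=1/2$ and $\nu=\delta_{-\la_0}$ in one stroke; the degenerate case $t=1$ is correctly excluded. The paper reaches the same conclusion by a slightly different, more hands-on route: it rotates so that $\la_0=1$, takes only the real part of the first moment, and uses the identity $\cos\t=2\cos^2(\t/2)-1$ to first deduce $0\ge 2\mu(1)-1$ (hence $\mu(1)=1/2$) and then to conclude that $\int_{\ST\backslash\{1\}}\cos^2(\t/2)\,d\mu=0$, so that the remaining mass must sit where the nonnegative integrand vanishes, namely at $-1$. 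The two proofs encode the same extremality phenomenon --- a probability measure on $\ST$ whose barycenter has modulus $1$ is a point mass --- but yours invokes the equality case of the triangle inequality for vector-valued integrals as a single packaged lemma (and needs no rotation), while the paper's stays entirely within real, scalar computations and derives the support condition from the vanishing of an explicit nonnegative integrand. Either is acceptable; if you keep your version, you may wish to include the one-line justification of the equality criterion rather than cite it as folklore.
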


\begin{proof}
Applying a rotation we may assume that $\la_0=1$. Since $\mu$ is a probability measure we have that
$$
\int_{\ST\backslash\{1\}} d\mu(\la) = 1 - \mu(1). 
$$
Its first moment is zero, hence
\begin{align*}
0 & =  {\rm Re} \int_{\ST}\la d\mu(\la) \\ 
 & = \mu(1) +  \int_{(-\pi,\pi]\backslash\{0\}} \cos\t \,  d\mu(\t) \\ 
 & = \mu(1) + \int_{(-\pi,\pi]\backslash\{0\}}   \big[ 2\cos^2(\t/2) -1 \big] d\mu(\t) \\ 
 & =  2\mu(1) -1 + \int_{(-\pi,\pi]\backslash\{0\}}  2 \cos^2(\t/2) \, d\mu(\t) \\ 
 & \geq  2\mu(1) -1. 
\end{align*}
It follows that $\mu(1)=1/2$. 

Once again, the vanishing of the first moment yields 
$$
0 =  {\rm Re} \int_{\ST}\la d\mu(\la) =  2 \int_{(-\pi,\pi]}  \cos^2(\t/2) \, d\mu(\t)  -1. 
$$
Hence, 
$$
\frac{1}{2} =  \int_{(-\pi,\pi]}  \cos^2(\t/2) \, d\mu(\t) = \mu(1) + \int_{(-\pi,\pi]\backslash\{0\}} \cos^2(\t/2) \, d\mu(\t), 
$$
so that 
$$
\int_{(-\pi,\pi]\backslash\{0\}} \cos^2(\t/2) \, d\mu(\t) = 0. 
$$
Since the integrand is non-negative, the measure $\mu$ on $(-\pi,\pi]\backslash\{0\}$ can only be supported where $\cos(\t/2)$ vanishes, that is, at $\t=\pi$. Hence, ${\rm supp} (\mu) = \{\pm 1\}$ and the proof is complete. 

\end{proof}

\vskip.3cm
%%%%%%%%%%%%%%%%%%%%%%%%%%%%%%%%%%%%%
%%%%%%%%%%%%%%%%%%%%%%%%%%%%%%%%%%%%%
\noindent \textbf{Conflict of interest statement}. On behalf of all authors, the corresponding author states that there is no conflict of interest.

\vskip.2cm
\noindent  \textbf{Data availability statement}. This manuscript has no associated data.

%%%%%%%%%%%%%%%%%%%%%%%%%%%%%%%%%%%%%%%%%%%%%%%%%%%%%%%%%%%%%%%%%%%%%%%%%%
%%%%%%%%%%%%%%%%%%%%%%%%%%%%%%%%%%%%%%%%%%%%%%%%%%%%%%%%%%%%%%%%%%%%%%%%%%

\begin{thebibliography}{99}
%%%%%%%%%%%%%%%%%%%%%%%%%%%%%%%%%%
\bibitem{Ah70} D.~Aharonov, Proof of the Bieberbach conjecture for a certain class of univalent functions, \emph{Israel J. Math.} \textbf{8} (1970), 102-104.

\bibitem{ASS99} D.~Aharonov, H.S.~Shapiro, A.Yu.~Solynin, A minimal area problem in conformal mapping, \emph{J. Anal. Math.} \textbf{78} (1999), 157-176.

\bibitem{BGP04} A.~Baernstein II, D.~Girela, J.A.~Pel\'aez, Univalent functions, Hardy spaces and spaces of Dirichlet type, \emph{Illinois J. Math.} \textbf{48} (2004), no.~3, 837-859.

\bibitem{Bi76} D.H.~Bishouty, The Bieberbach conjecture for univalent functions with small second coefficients, \emph{Math. Z.} \textbf{149} (1976), no.~2, 183-187.

\bibitem{CHLM23} M.~Chuaqui, R.~Hern\'andez, A.~Llinares, A.~Mas, On the convolution of convex 2-gons, \emph{J. Math. Anal. Appl.} \textbf{538} (2024), no.~2, Paper No.~128387, 14 pp. 

\bibitem{CO95} M. Chuaqui, B. Osgood, An extension of a theorem of Gehring and Pommerenke, \textit{Israel J. Math.} \textbf{91} (1995), no. 1-3, 393-407.

\bibitem{CCKR24} M.D.~Contreras, F.J.~Cruz-Zamorano, M.~Kourou, L.~Rodríguez-Piazza, On the Hardy number of Koenigs domains, \emph{Anal. Math. Phys.} \textbf{14} (2024), no.~6, Paper No.~119, 21 pp.

\bibitem{CP07} L.~Cruz, Ch.~Pommerenke, On concave univalent functions, \emph{Complex Var. Elliptic Equ.}  \textbf{52} (2007), no.~2-3, 153-159.

\bibitem{Du1} P.L.~Duren, \emph{Theory of $H^p$ Spaces}, Academic Press, New York-London 1970. Reprint: Dover, Mineola, New York, 2000.

\bibitem{Du2} P.L. Duren, \emph{Univalent Functions}, Springer-Verlag, Berlin-New York, 1983.

\bibitem{Du3} P.L. Duren, \emph{Harmonic Mappings in the Plane}, Cambridge University Press, Cambridge, 2004.

\bibitem{EK70} P.J.~Eenigenburg, F.R.~Keogh, The Hardy class of some univalent functions and their derivatives, \emph{Michigan Math. J.} \textbf{17} (1970), 335–346.

\bibitem{Eh74} G.~Ehrig, The Bieberbach conjecture for univalent functions with restricted second coefficients, \emph{J. London Math. Soc. (2)} \textbf{8} (1974), 355-360.

\bibitem{EKRS10} M.~Elin, D.~Khavinson, S.~Reich, D.~Shoikhet, Linearization models for parabolic dynamical systems via Abel's functional equation, \emph{Ann. Acad. Sci. Fenn. Math.} \textbf{35} (2010), no.~2, 439-472.

\bibitem{ES10} M.~Elin, D.~Shoikhet, \emph{Linearization models for complex dynamical systems, Topics in univalent functions, functional equations and semigroup theory}, Operator Theory: Advances and Applications, 208, Linear Operators and Linear Systems, Birkhäuser Verlag, Basel, 2010. 

\bibitem{Fi67} M.~Finkelstein, Growth estimates of convex functions, \emph{Proc. Amer. Math. Soc.} \textbf{18} (1967), 412-418.

\bibitem{FMR98} R.~Fournier, J.~Ma, St.~Ruscheweyh, Convex univalent functions and omitted values, \emph{Approximation theory}, 225-241, Monogr. Textbooks Pure Appl.~Math., 212, Dekker, New York, 1998.

\bibitem{Gr20} T.H.~Gronwall, On the distortion in conformal mapping when the second coefficient in the mapping function has an assigned value, \emph{Proc. Nat. Acad. Sci. USA} \textbf{6} (1920) 300-302. 

\bibitem{Han70} L.J.~Hansen, Hardy classes and ranges of functions, \emph{Michigan Math. J.} \textbf{17} (1970), 235-248.

\bibitem{Hay55} W.K.~Hayman, The asymptotic behaviour of p-valent functions, \emph{Proc. London Math. Soc. (3)} \textbf{5} (1955), 257-284.

\bibitem{Hu57} J.A.~Hummel, The coefficient regions of starlike functions, \emph{Pacific J. Math.} \textbf{7} (1957), 1381-1389.

\bibitem{Je54} J.A.~Jenkins, On a problem of Gronwall, \emph{Ann. of Math. (2)} \textbf{59} (1954), 490-504.

\bibitem{Ka20} C.~Karafyllia, On the Hardy number of a domain in terms of harmonic measure and hyperbolic distance, \emph{Ark. Mat.} \textbf{58} (2020), no.~2, 307-331.

\bibitem{Ka23} C.~Karafyllia, Extremal distance and conformal mappings in Hardy and Bergman spaces, \emph{J. Geom. Anal.} \textbf{33} (2023), no.~5, Paper No.~155, 15 pp.

\bibitem{KK21} C.~Karafyllia, N.~Karamanlis, Geometric characterizations for conformal mappings in weighted Bergman spaces, \emph{J. Anal. Math.} \textbf{150} (2023), no. 1, 303–324.

\bibitem{KS09} Y.C.~Kim, T.~Sugawa, Uniformly locally univalent functions and Hardy spaces, \emph{J. Math. Anal. Appl.} \textbf{353} (2009), no.~1, 61-67.

\bibitem{KS11} Y.C.~Kim, T.~Sugawa, Hardy spaces and unbounded quasidisks, \emph{Ann. Acad. Sci. Fenn. Math.} \textbf{36} (2011), no.~1, 291-300.

\bibitem{Lee73} G.B.~Leeman, The constrained coefficient problem for typically real functions, \emph{Trans. Amer. Math. Soc.} \textbf{186} (1973), 177-189.

\bibitem{Leu79} Y.J.~Leung, Integral means of the derivatives of some univalent functions, \emph{Bull. London Math. Soc.} \textbf{11} (1979), no.~3, 289-294.

\bibitem{Lew74} J.L.~Lewis, On a problem of Gronwall for Bazilevi\v c functions, \emph{Trans. Amer. Math. Soc.} \textbf{195} (1974), 231-242.

\bibitem{Lew83} J.L.~Lewis, Convexity of a certain series, \emph{J. London Math. Soc.} \textbf{(2)} 27 (1983), no.~3, 435-446.

\bibitem{LPR55} A.J.~Lohwater, G.~Piranian, W.~Rudin, The derivative of a schlicht function, \emph{Math. Scand.} \textbf{3} (1955), 103-106.

\bibitem{Ne70} E.~Netanyahu, On univalent functions in the unit disk whose image contains a given disk, \emph{J. Analyse Math.} \textbf{23} (1970), 305-322.

\bibitem{Pa31} V.~Paatero, \"Uber die konforme Abbildung von Gebieten deren R\"ander von beschr\"ankter Drehung sind, \emph{Ann. Acad. Sci. Fenn. A} \textbf{33} (1931), no.~9.

\bibitem{Pf75} J.A.~Pfaltzgraff, Univalence of the integral of $f'(z)^\la$, \emph{Bull. London Math. Soc.} \textbf{7} (1975), no.~3, 254-256.

\bibitem{PC96} P.~Poggi-Corradini, Hardy spaces and twisted sectors for geometric models, \emph{Trans. Amer. Math. Soc.} \textbf{348} (1996), no. 6, 2503-2518.

\bibitem{PC97a} P.~Poggi-Corradini, The Hardy class of geometric models and the essential spectral radius of composition operators, \emph{J. Funct. Anal.} \textbf{143} (1997), no.~1, 129-156.

\bibitem{PC97b} P.~Poggi-Corradini, The Hardy class of Koenigs maps, \emph{Michigan Math. J.} \textbf{44} (1997), no.~3, 495-507.

\bibitem{Po62} Ch.~Pommerenke, On starlike and convex functions, \emph{J. London Math. Soc.} \textbf{37} (1962), 209-224.

\bibitem{Po75} Ch. Pommerenke, \emph{Univalent Functions}, Vandenhoeck \& Ruprecht, G\"ottingen, 1975.

\bibitem{Po92} Ch. Pommerenke, \emph{Boundary Behaviour of Conformal Maps}, Springer-Verlag, Berlin, 1992.

\bibitem{Po08} Ch. Pommerenke, On the lower order of locally univalent functions, \emph{Comput. Methods Funct. Theory} \textbf{8} (2008), no.~1-2, 373-384.

\bibitem{RS22} V.~Ravichandran, S.~Nagpal, A survey on univalent functions with fixed second coefficient, \emph{Math. Newsl.} \textbf{33} (2022), no.~1, 17-28.

\bibitem{RZ76} W.C.~Royster, M.~Ziegler, Univalent functions convex in one direction,  \emph{Publ. Math. Debrecen} \textbf{23} (1976), no.~3-4, 339-345.

\bibitem{SSW80} A.~Szynal, J.~Szynal, S.~Wajler, On the Gronwall's problem for some classes of univalent functions. \emph{Analytic functions, Kozubnik 1979 (Proc. Seventh Conf., Kozubnik, 1979)}, pp. 429-434, Lecture Notes in Math., 798, Springer, Berlin, 1980.

\bibitem{Te70} D.E.~Tepper, On the radius of convexity and boundary distortion of Schlicht functions, \emph{Trans. Amer. Math. Soc.} \textbf{150} (1970), 519-528.

\bibitem{Tr75} S.Y.~Trimble, A coefficient inequality for convex univalent functions, \emph{Proc. Amer. Math. Soc.} \textbf{48} (1975), 266-267.

\bibitem{Tsu} M. Tsuji, \emph{Potential Theory in Modern Function Theory}, Maruzen, Tokyo, 1959.

\bibitem{Zy59} A.~Zygmund, \emph{Trigonometric series}, 2nd ed. Vols. I, II., Cambridge University Press, New York, 1959.  

\end{thebibliography}
\end{document}